\newcommand{\Rd}{{\mathsf{R}}^{\raisebox{0.5mm}{$\scriptscriptstyle \bullet$}}}
\newcommand{\C}{\mathbb{C}}
\newcommand{\Ct}{\mathbb{C}^\times}
\newcommand{\A}{\mathbb{A}}
\newcommand{\Dd}{\mathbb{D}}
\newcommand{\Z}{\mathbb{Z}}
\newcommand{\R}{\mathbb{R}}
\newcommand{\bT}{\mathsf{T}}
\newcommand{\bA}{\mathsf{A}}
\newcommand{\bff}{\mathsf{f}}
\newcommand{\bG}{\mathsf{G}}
\newcommand{\bg}{\mathsf{g}}
\newcommand{\bGt}{{\widetilde{\mathsf{G}}}}
\newcommand{\bGa}{{\mathsf{G}_{\textup{Aut}}}}
\newcommand{\bGas}{{\mathsf{G}_{\textup{Aut},\omega}}}
\newcommand{\sP}{\mathsf{P}}
\newcommand{\bY}{\mathsf{Y}}
\newcommand{\bZ}{\mathsf{Z}}
\newcommand{\bZb}{\overline{\mathsf{Z}}}
\newcommand{\bL}{\mathsf{L}}
\newcommand{\bM}{\mathsf{M}}
\newcommand{\bR}{\mathsf{R}}
\newcommand{\Gr}{\mathsf{Gr}}
\newcommand{\bB}{\mathsf{B}}
\newcommand{\bX}{\mathsf{X}}
\newcommand{\bj}{\mathsf{j}}
\newcommand{\bP}{\mathbb{P}}
\newcommand{\cK}{\mathscr{K}}
\newcommand{\cL}{\mathscr{L}}
\newcommand{\Lamp}{{\cL_\textup{amp}}}
\newcommand{\cR}{\mathscr{R}}
\newcommand{\cP}{\mathscr{P}}
\newcommand{\cU}{\mathscr{U}}
\newcommand{\cM}{\mathscr{M}}
\newcommand{\cV}{\mathscr{V}}
\newcommand{\cW}{\mathscr{W}}
\newcommand{\cE}{\mathscr{E}}
\newcommand{\Ths}{T^{1/2}\left[\bX/\bG\right]}
\newcommand{\symr}{{\scriptscriptstyle /\!/\!/\!/}}
\newcommand{\tS}{\widetilde{S}}
\newcommand{\phib}{\phi^\circ}
\newcommand{\bmu}{\boldsymbol{\mu}}
\newcommand{\bu}{\mathbf{u}}
\newcommand{\ust}{{\textup{ust}}}
\newcommand{\sst}{{\textup{sst}}}
\newcommand{\bGamma}{\boldsymbol{\Gamma}}
\newcommand{\bPhi}{\boldsymbol{\Phi}}
\newcommand{\tgam}{\overline{\gamma}}
\newcommand{\txx}{\widetilde{x}}
\newcommand{\cS}{\mathscr{S}}
\newcommand{\fg}{\mathfrak{g}}
\newcommand{\fgb}{\boldsymbol{\mathfrak{g}}}
\newcommand{\fp}{\mathfrak{p}}
\newcommand{\rd}{/\!\!/\!\!/\!\!/}
\newcommand{\rdd}{/\!\!/}
\newcommand{\aroof}{\widehat{a}} 
\newcommand{\vth}{\vartheta}
\newcommand{\tf}{\tilde f} 
\newcommand{\cO}{\mathscr{O}}
\newcommand{\Hd}{{H}^{\raisebox{0.5mm}{$\scriptscriptstyle \bullet$}}}
\newcommand{\tO}{\widehat{\mathscr{O}}}
\newcommand{\vir}{\textup{vir}}
\newcommand{\mero}{{\textup{mero}}}
\newcommand{\tp}{\textup{top}}
\newcommand{\Stab}{\mathsf{Stab}}
\newcommand{\Attr}{\mathsf{Attr}}
\newcommand{\Db}{D^{\textup{b}}}
\newcommand{\tX}{\widetilde{\bX}}
\newcommand{\comp}{\textup{compact}}
\DeclareMathOperator{\Coh}{Coh}
\DeclareMathOperator{\Hom}{Hom}
\DeclareMathOperator{\Ker}{Ker}
\DeclareMathOperator{\Aut}{Aut}
\DeclareMathOperator{\Lie}{Lie}
\DeclareMathOperator{\const}{const}
\DeclareMathOperator{\Ell}{Ell}
\DeclareMathOperator{\chr}{char}
\DeclareMathOperator{\pt}{pt}
\DeclareMathOperator{\cochar}{cochar}
\DeclareMathOperator{\Pic}{Pic}
\DeclareMathOperator{\tr}{tr}
\DeclareMathOperator{\Spec}{Spec}
\DeclareMathOperator{\supp}{supp}
\DeclareMathOperator{\ev}{ev}
\DeclareMathOperator{\QM}{\mathsf{QM}}
\DeclareMathOperator{\Maps}{\mathsf{Maps}}
\DeclareMathOperator{\Jet}{\mathsf{Jet}}
\DeclareMathOperator{\jet}{\mathsf{jet}}
\DeclareMathOperator{\End}{End}
\DeclareMathOperator{\Image}{Image}
\DeclareMathOperator{\diag}{diag}
\DeclareMathOperator{\chern}{ch}
\DeclareMathOperator{\Desc}{Desc}
\DeclareMathOperator{\Symm}{Symm}
\DeclareMathOperator{\Vertex}{\textup{\textsf{Vertex}}}
\DeclareMathOperator{\VwD}{\textup{\textsf{VwD}}}
\newcommand{\xrightarrowdbl}[2][]{%
  \xrightarrow[#1]{#2}\mathrel{\mkern-14mu}\rightarrow
}
\newtheorem{Theorem}{Theorem}
\newtheorem{Lemma}{Lemma}[section]
\newtheorem{Proposition}[Lemma]{Proposition}
\newtheorem{Corollary}[Lemma]{Corollary}
\theoremstyle{definition}
\newcommand{\Mbar}{\overline{M}}
\begin{document}

\title{Nonabelian stable envelopes, vertex functions with descendents, and integral
  solutions of $q$-difference equations} 
\author{Andrei Okounkov} 
\date{}
\maketitle

\abstract{We generalize the construction of elliptic stable envelopes
  \cite{ese, part1} to actions of connected reductive groups and give a direct inductive proof of
  their existence and uniqueness in a rather general situation. We
  show these have powerful enumerative applications, in particular,
  to the computation of vertex functions and their monodromy.} 

\setcounter{tocdepth}{2}
\tableofcontents

\section{Introduction}

\subsection{Interpolation and stable envelopes}

\subsubsection{}

The univariate interpolation formula
$$
f(x) = \sum_{i=1}^n f(a_i) \prod_{j\ne i} \frac{x-a_j}{a_i-a_j} \,, 
$$
which dates back to the late 18 century, gives a canonical splitting of the
surjection 
\begin{equation}
\xymatrix{
\Z[x,a_1,\dots,a_n] \ar[rr] \ar@{=}[d]&& \Z[x,a_1,\dots,a_n] \big/ \left( \prod_i
  (x-a_i) \right) \ar@{=}[d]\\
\Hd_\bGt(\C^n) \ar[rr]^{\textup{restriction\qquad\qquad\qquad}} && \Hd_{\bGt}(\C^{n} \setminus \{0\}) =
\Hd_{\bGa}(\bP^{n-1}) \,. \ar@/_1.64pc/[ll]_{\textup{interpolation}}
} \label{Lagrange}
\end{equation}
The bottom row in \eqref{Lagrange} provides one possible topological
interpretation of this elementary fact.
The groups there are equivariant cohomology groups for
the groups (in this instance, tori) 
\begin{equation}
1 \to \bG \to \bGt \to \bGa \to 1 \,, \label{bGt}
\end{equation}
where $\Lie \bGt$ acts on $\C^n$ by 
$$
(x,a_1,\dots,a_n) \mapsto \diag(a_1 -x, \dots, a_n-x) 
$$
and rank 1 subtorus $\bG \subset \bGt$ corresponds to the variable $x$.

Since $0\in \C^n$ is precisely the $\bG$-unstable locus in $\C^n$ for
a certain linearization of $\cO_{\C^n}$, \eqref{Lagrange} can be
interpreted as an instance of a canonical lift of cohomology classes
from a GIT quotient $\bP^{n-1} = \C^n \rdd \bG$ to the ambient
quotient stack $\left[\C^n / \bG\right]$. \footnote{For the Grassmannian $\Gr(k,n) = \Hom(\C^k,
\C^n) \rdd GL(k)$, the problem  becomes to interpolate a symmetric polynomial 
$f(x_1,\dots,x_k)$ from its values at $\binom{n}{k}$ points where
$x_i=a_{\mu_i}$ with $\mu_1 < \dots < \mu_k$. 
In the simplest example of general theory worked out in
Appendix \ref{s_A}, the reader will recognize in the elliptic stable
envelope \eqref{eq:29} a class that plays the role of the improved
conormal to a
Schubert cell in in $T^*\Gr(k,n)$.}

\subsubsection{}
Parallel formulas exist for interpolation of Laurent polynomials,
namely
\begin{equation}
  \label{eq:9}
  f_L(x) =
    \sum_{i=1}^n f(a_i) \left({x}\big/{a_i}\right)^L \, \prod_{j\ne i}
    \frac{1-x/a_j}{1-a_i/a_j} \,, \quad L \in \Z\,, 
\end{equation}
as well as for sections of line bundles on elliptic curves: 
\begin{equation}
  \label{ellLagr} 
   f_z(x) =
    \sum_{i=1}^n f(a_i)  \frac{\vartheta(z x /a_i)}{\vartheta(z)}
    \prod_{j\ne i} \frac{\vartheta(x/a_j)}{\vartheta(a_i/a_j)}\,,
    \quad z \in E, 
  \end{equation}
  and these have an interpretation in equivariant K-theory and
  equivariant elliptic cohomology, respectively.

  In \eqref{ellLagr},
  we write the group law on an elliptic curve $E$ multiplicatively and
  denote by $\vartheta(x)$ the unique section of the line bundle with
  divisor $1\in E$. Line bundles on elliptic curves have moduli, and
  \eqref{ellLagr} is a section of a line
  bundle $\cS_z$ of degree $n$ corresponding to the point
  $$
  z/\textstyle{\prod} a_i \in E \cong \Pic_n(E) \,.
  $$
The pole at $z=1$ in \eqref{ellLagr} means that interpolation fails
for special \emph{resonant} values of $z$, which concretely means
the section $\prod \vartheta(x/a_i)$ of $\cS_1$ vanishes at all
interpolation points. 

As the elliptic curve generates to a nodal rational curve, the poles
in $z$ lead to a piecewise constant behavior in \eqref{eq:9},
producing Laurent polynomials with
$$
\deg  f_L(x) = \textup{Newton polygon} (f_L) = [L,L+n-1] \,.
$$
This can be phrased as $f_L$ being sections of differently linearized
line bundles on the \emph{compactified} K-theory
$\overline{K}_\bGt(\C^n)$ as in Section 3.3 of \cite{part1}.

\subsubsection{}

This baby example illustrates the general principles that
\begin{enumerate}
\item[(1)] elliptic cohomology
  classes are section of line bundles $\cS$ that have moduli;
  \item[(2)]  the interpolation, which in this paper will mean a lift from a GIT
quotient to the corresponding quotient stack, has a unique solution for a
certain Zariski open\footnote{but not dense, e.g. we need $\deg \cS_z
  = n$ above} set of $\cS$;
\item[(3)] the resonant divisors in the moduli of $\cS$ will produce a
certain characteristic wall-and-chamber pattern upon specialization to
K-theory, compare with \cites{DHLMO, HLS}. 
\end{enumerate}



\subsubsection{}

The general interpolation problem considered in this paper is the
following. We consider an action of a connected reductive 
group $\bG$ on a smooth quasiprojective variety $\bX$. Our 
main assumption about $\bX$ is that the quotient stack $\left[ \bX/\bG
\right]$ has a polarization $T^{1/2}\left[ \bX/\bG
\right]\in K([\bX/\bG])$, see Section \ref{defpol}. 

With some usual assumptions about
total attracting sets, we prove in Theorem \ref{t1} that the interpolation problem has a
unique solutions for an open dense set of line bundles $\cS$
algebraically equivalent to $\Theta(T^{1/2}\left[ \bX/\bG
\right])$. Here and below we follow the notations of the Appendix to \cite{part1}
for various elliptic cohomology constructions. In particular, the
elliptic Euler class of a vector bundle $V$ on $\bX$ is a section of a certain
line bundle on $\Ell(X)$ denoted $\Theta(V)$. 

A more delicate result, required for many application, is the analog
of Theorem \ref{t1} for  $\left[ \bX\rd \bG
\right]= \left[ \mu^{-1}(0)/\bG\right]$ when the action of $\bG$ on
$X$ is symplectic with a moment map $\mu$. See Theorem \ref{t2} in 
Section \ref{s_symplr}, where the interpolation statement is phrased
as a result on $\bX$ for elliptic cohomology classes supported on
$\mu^{-1}(0)$.

Our constructions are always equivariant with respect to an
 ambient group of automorphisms as in \eqref{bGt}.

 \subsubsection{}
Among earlier results of this general flavor, one should mention the
following.

First, when $\bG$ is a abelian, this is essentially the theory of elliptic
stable envelopes, initiated in \cite{ese} and revisited in broader 
generality in \cite{part1}. For this reasons, we call the
interpolation map the \emph{nonabelian} stable envelope.

Second, there is a special case of nonabelian $\bG$ which is already covered
by different existing techniques. Namely, if $\bG = \prod GL(n_i)$ then
the construction of Section 2.1 of \cite{Bethe} extends verbatim to
elliptic cohomology and reduces nonabelian stable envelopes to the abelian
ones, see also Section \ref{s_GL} below.

Our interpolation results may be seen as an elliptic
cohomology parallel to the K-theory and $\Db \Coh$ constructions of
\cites{DHLMO, HLS}, many important ingredients of which originated in the solo
work \cite{DHL} of D.~Halpern-Leistner. The setting of elliptic cohomology offers several
strong technical advantages, as the reader will notice. The
categorification of our elliptic cohomology construction is an area of
active current research, see e.g.\ \cite{AO2}.

\subsection{Enumerative geometry applications}

\subsubsection{}

The second principal result of this paper is an 
application of elliptic stable envelopes to the enumerative theory of
maps $C \to [\bY/\bG]$, where $C$ is curve,
smooth or nodal, and $\bY$ is either smooth or of the form $\bY =
\mu^{-1}(0)\subset \bX$. For concreteness, we will focus on the latter
case.

A map is called \emph{stable} if it takes unstable values only at 
finitely many nonsingular points of $C$.  For historical reasons,
these are called stable quasimaps and form a nice substack denoted by
$$
\QM(C\to \bY\rdd\bG) \subset \Maps(C \to  [\bY/\bG]) 
$$
see \cite{CFKM} and references therein, as well as a brief
recollection in Section \ref{s_quasimaps} below. It has a perfect
obstruction theory that yields a certain canonical K-theory class
$\tO_\vir$, called the symmetrized virtual structure sheaf.

\subsubsection{}

Many problems at the core of modern enumerative
geometry can be phrased as computations with $\tO_\vir$ for
suitable
$C$ and $\bX$. For instance, Hilbert schemes of
points, and more general moduli spaces of sheaves for ADE surfaces are
examples of Nakajima quiver varieties \cite{Nak1}, and so can
be written in the form $[\bX\rd\bG]$, where $\bG=\prod GL(n_i)$ and
$\bX$ is a linear representation of $\bG$. The corresponding quasimap
moduli spaces capture the K-theoretic Donaldson-Thomas counts for 
threefolds fibered in ADE surface, and provide some of the key
computations in the general DT theory of threefolds, see
\cite{Takagi}.

\subsubsection{}
Computations with $\tO_\vir$  really pivot on being able to
do pull-push in the following diagram of evaluation maps 
\begin{equation}
  \label{eq:18_}
  \xymatrix{
    & \ev_\infty^{-1}( \textup{stable}) \ar[ld]_{\ev_0}
    \ar[rd]^{\ev_\infty}  \ar@{^{(}->}[rr]&&\QM(\bP^1
    \xrightarrow{\,\,f\,\,} \bY\rdd \bG)  \\
    [\bX/\bG] && \bX\rd\bG\,, 
   } 
\end{equation}
where $ev_p$ denote the evaluation at $p\in\bP^1$. While the map
$\ev_0$ is not proper in \eqref{eq:18_}, it is proper if one fixes the
degree of $f$ and restricts to maps that are fixed under the action of
$$
\Aut(\bP^1,{0,\infty}) \cong \Ct_q \,. 
$$
Here the subscript means that we denote an element of this group by
$q$. So, the operator
\begin{equation}
  \label{defbJ2}
\VwD: K_{\bGa}(\bX\rd \bG) \xrightarrow{\quad} K_{\bGt\times \Ct_q}(\bX)_{\textup{localized}} [[z]]
\end{equation}
given by 
\begin{equation}
  \label{defbJ}
  \VwD = \ev_{0,*} \left( z^{\deg f} \tO_\vir \otimes
  \ev_\infty^*(\, \cdot \,) \right) \,. 
\end{equation}
is well-defined. The abbreviation $\VwD$ stands for \emph{vertex with descendents}, this is an object
studied at length in \cite{Bethe,pcmi,Smir_desc}. Its limit in
ordinary equivariant cohomology is known under various names,
including the I-function, and is the subject of a really vast
literature, see for instance \cite{Iri} and references therein.

\subsubsection{}

In this paper, we \emph{identify} descendent vertices with the
nonabelian stable envelopes for $E=\Ct/q^\Z$. In Theorem \ref{t3}, we prove, with usual
technical assumptions, the commutativity
of the following diagram
\begin{equation}
  \label{eq:2}
  \begin{tikzcd}
    [%
    ,row sep = 7ex
    ,/tikz/column 1/.append style={anchor=base west}
    ,/tikz/column 6/.append style={anchor=base west}
    ]
    K_{\bGa}(\bX\rd \bG)_{z,\mero} \ar[d,swap,"\bGamma"] 
    \arrow[rrrrr,"\textup{ch(elliptic stable envelope)}"]
    &&&&&
K_{\bGt}(\bX) _{z,\mero}   \ar[d,"\hbar^{\dots} \otimes \bGamma' "]  \\
 K_{\bGa}(\bX\rd \bG)_{z,\mero} \arrow[rrrrr,"\VwD"]
&&&&&  K_{\bGt}(\bX) _{z,\mero}  \,, 
   \end{tikzcd}  
\end{equation}
in which:
\begin{itemize}
\item[$\bullet$] $ K_{\bGt}(\bX)_{z,\mero}$ denotes meromorphic
  functions on
  \begin{equation}
  \Spec K_{\bGt}(\bX) \times \{ |q| < 1\} \times \{ 0 < \textup{distance}(z,0_{\Lamp})<
  \varepsilon\}
  \label{spec_stack}
\end{equation}
  with poles of the form discussed in Section \ref{s_mero} union
  the resonant locus for the K\"ahler variables $z$ as in Section 2.3
  on \cite{part1}. The ample line bundle $\Lamp\in \Pic(X)$ is used
  here to define stability and the point $z=0_{\Lamp}$ is the origin
  in the expansion \eqref{defbJ}. 
 \item[$\bullet$]  The Chern-character-type map \eqref{chern} applied to
   the elliptic stable envelopes yields the top arrow in \eqref{eq:2}.
\item[$\bullet$]  The vertical arrows in \eqref{eq:2} are multiplications by certain
  characteristic classes of $\bX$ built from $q$-Gamma functions in a
  way that parallels the work of H.~Iritani and others in cohomology,
  see \cite{Iri}. As explained in Section \ref{s_Dd}, it is much more
  natural to consider these factors in the setting of equivariant
  K-theory and $q$-difference special functions. 
\end{itemize}

\subsubsection{}

The following philosophical conclusion can be made from \eqref{eq:2}.
To extend an elliptic cohomology class from the stable locus
to all of $[\bX/\bG]$, one can do K-theoretic integration over all
maps $f:[\C/\Ct] \to [\bX/\bG]$ such that $f$ takes a stable value at
the generic point.

To be sure, since such interpolation is unique, the primary flow of
information is in the opposite direction, that is, from elliptic
stable envelopes to enumerative geometry. 


\subsubsection{}

In order to have a commutative diagram of the form \eqref{eq:2},  it is
essential to work in equivariant elliptic cohomology. Indeed, the
bottom arrow in \eqref{eq:2} is an analytic function of the K\"ahler
variables $z$, and therefore the top arrow must depend on the same
variables $z$ also analytically.

We recall that stable envelopes
in equivariant K-theory depend on the corresponding parameter in a
piecewise constant way, while stable envelopes in equivariant
cohomology do not depend on it at all.

\subsection{Integral solutions of $q$-difference equations} 

\subsubsection{}
Theorem \ref{t3} implies a certain integral representation for the
vertex function with descendents. Using the equivalence between
relative and descendent insertions established in \cite{Bethe}, one
obtains from it an integral representation for vertices with relative
insertions, see \cite{pcmi, Takagi} for a detailed introduction.

It is known, see \cite{pcmi,OS} that these relative counts are
fundamental solutions of certain $q$-difference equations of
general importance in mathematical physics, including the quantum
Knizhnik-Zamolodchikov equations, and their relatives. Thus we obtain
an integral solutions of these equations.

\subsubsection{}
We recall that integral representations of solutions
of $q$-difference equations generalize the eigenvalue
problem for their $q\to 1$ limit, and thus the subject generalizes
the search for eigenvalues and eigenvectors in the vast subject broadly
known as Bethe Ansatz, see the discussion in \cite{Bethe} and
references therein.

We admit it is a hopeless task within the scope of this paper to supply an overview or a
representative list of references on the subject Bethe Ansatz. The
subject is revisited by every generation of mathematical
physicists, the present generation taking the inspiration from the 
insights of Nekrasov and Shatashvili \cite{NS}.  See, however,
\cites{Matsuo, Reshet_int,Varchenko1} for several early references dealing with integral solutions
of the qKZ equations.

\subsubsection{}

The integral we get are of the so-called Mellin-Barnes type.  Their
schematic form
\begin{equation}
  \label{eq:5}
  \left( \alpha, {\textstyle{\begin{bmatrix}
    \textup{\scriptsize{fundamental}} \vspace{-2mm} \\ 
    \textup{\scriptsize{solution}} 
  \end{bmatrix}}}\beta\right)= \frac{1}{|W|} \int_{|x_i|=1}
  \mathbf{f}_\alpha(x,\dots) \,  \mathbf{g}_\beta(x,\dots)\,
  \bPhi(x,\dots)
  \prod \frac{dx_i}{2\pi i x_i}
\end{equation}
is exactly the same as discussed in Section 1.1.6 of
\cite{Bethe}. Here:
\begin{itemize}
\item[$\bullet$] $\alpha,\beta$ are vectors in linear space on which the
  difference operators act, or more precisely vectors in the fiber of
  the $q$-difference connection over a point fixed by
  $q$-shifts. Here, this vector space is identified geometrically with
  $K_{\bGa}(\bX\rd \bG)$. 
\item[$\bullet$] $\alpha \mapsto \mathbf{f}_\alpha(x,\dots)$ and
  $\beta \mapsto \mathbf{g}_\beta(x,\dots)$ are linear maps to
  functions of $x$ and other variables. Geometrically, these 
  are functions on \eqref{spec_stack}. 
 \item[$\bullet$] The variables $x_i$ parametrize a maximal compact
   torus in $\bG$ and $W$ is the corresponding Weyl group. 
 \item[$\bullet$] $\bPhi(x,\dots)$ is a product of $q$-Gamma functions
   in some monomials, geometrically identified as certain
   weights of $\bGt$. 
 \end{itemize}
 It was shown in \cite{Bethe} that the correct choice for
 $\mathbf{f}_\alpha$ is the stable envelope in equivariant K-theory
 and it was also noted that the stable envelopes in equivariant elliptic
 cohomology should be the natural choice for $\mathbf{g}_\beta$.
 As a corollary of our Theorem \ref{t3}, this expectation is confirmed
 in full generality considered here.

 As a remark, formula 4 in \cite{Bethe} contains an elementary
 factor denoted by $\mathbf{e}(x,z)$. Here the elliptic automorphy induced by
 this factor is part of the
 definition of $\mathbf{g}_\beta$. 

 \subsubsection{}
 Our proof of Theorem \ref{t3} and, hence of the integral
 representation of the vertex functions, uses purely geometric
 arguments and \emph{does not} rely on contour deformations,
 residue computations, and other traditional techniques in the
 subject.

 However, to help the reader parse the proof of Theorem \ref{t3}, we have provided
 an almost word-by-word translation into such language in the simplest
 example of vertex functions for $T^*\Gr(k,n)$. This can be found
 in Appendix \ref{s_A}. This appendix is
 recycled from notes, dating back to the Summer of 2014, that did not make it to final version of
 \cite{ese}.

 \subsection{Monodromy} 

 \subsubsection{}
Monodromy of the quantum differential and difference equations is
a subject on the crossroads of many fields, see e.g.\ \cites{BezOk,
  slc, icm}. One of the principal reasons for the introduction of elliptic stable
envelopes in \cite{ese} were precisely their applications to monodromy
in 
enumerative problems.

We recall
that in \cite{ese} the monodromy of the quantum difference equations
in \emph{equivariant variables} was computed in terms of elliptic R-matrices
introduced in \cite{ese}. By construction, these R-matrices are ratios
of two abelian stable envelopes \cite{ese, part1}.

Since the $q$-difference equations in question generalize,
in particular, the quantum Knizhnik-Zamolodchikov equations of
I.~Frenkel and N.~Reshetikhin \cite{FrenResh}, this result encompasses a lot of prior
research, the introduction to which may be found in \cite{EFK}.

\subsubsection{} 

In this paper, as a corollary of Theorem \ref{t3}  we compute the
monodromy in the \emph{K\"ahler variables} as the ratio of two
nonabelian stable envelopes, see Corollary \ref{c_Monodr} in
Section \ref{s_c_Monodr}. 

Recall that the monodromy of a $q$-difference equation is defined as the
ratio of the fundamental solutions at two different points fixed by
$q$-shifts. In the present setting, there are points $z=0_{\cL_{\pm}}$
at infinity of the torus $\bZ$ of K\"ahler variables 
labelled by two different ample
bundles $\cL_{\pm} \in \Pic_\bG(\bX)$. As usual, the R-matrix is
defined as the ratio of the corresponding stable envelopes, and we
prove that it equals the monodromy.

 \subsubsection{}
 In the $q\to 1$ limit, the quantum difference equations become
 the quantum differential equations, and the functions
 $\mathbf{f}_\alpha$ and $\mathbf{g}_\beta$ turn into their
 cohomological and K-theoretic analogs, respectively.

 Integral solutions and monodromy of quantum differential
 equations is a subject that is very closely linked, in particular, to the work of
 Kentaro Hori on \emph{grade selection rules}, see e.g.\ 
 \cites{Hori_Beij, Hori_Tong}. These grade selection rules are a form
 of stable envelopes in derived categories of coherent sheaves and
 equivariant K-theory, and they have influenced, in particular, the work
 of Danial Halpern-Leistner \cite{DHL}, as well as many other advances
 and computations.

 Our proof of Theorem \ref{t3} gives a uniform general treatment of
 these and many other examples of integral solutions of quantum
 differential and difference equations found earlier in the physics
 literature. 
 
 \subsection{Acknowledgements}

 I'd like to reiterate the words of gratitude to many people from \cite{part1}, and in particular, I'd like to thank Mina
Aganagic, Davesh Maulik, and Daniel Halpern-Leistner, for the
inspiration that their work (including joint work) provided for
the present project.

I am grateful to the Simons Foundation for being supported as a Simons
Investigator. I thank the
Russian Science Foundation for the support by the grant  19-11-00275.





\section{Nonabelian stable envelopes}\label{s_GIT}

\subsection{Restriction to the semistable locus}

\subsubsection{}

The constructions of \cite{part1} were for a torus $\bA$ acting on a smooth
quasiprojective variety $\bX$ over $\C$. They admit the following
generalization for actions of connected reductive groups $\bG$. The
setup follows the definition of the categorical and K-theoretical
stable envelopes for nonabelian actions, see \cite{DHLMO}. We denote by
$\bA$ a maximal torus of $\bG$. 

\subsubsection{}

As before, we work equivariantly with respect to an ambient group of
automorphisms. In the abelian case, this meant that we considered a larger torus $\bT
\supset \bA$ acting on $\bX$. In the nonabelian case, we consider a
larger group \eqref{bGt}, where $\bGa$ may be an arbitrary reductive group. 

\subsubsection{}

Let $\Lamp$ be an ample $\bGt$-equivariant line bundle on $\bX$. We may
assume that $\bX$ is $\bGt$-equivariantly embedded in $\bP(V)$, where
$V$ is a $\bGt$-module and $\Lamp=\cO_{\bP(V)}(1)$.

\subsubsection{}
In \cite{part1}, it was assumed that the total attracting set
\begin{equation}
\Attr_\sigma= \{ x \in \bX, \,\, 
\textup{$\lim_{t\to 0} \sigma(t) \cdot x$
  exists}\}\label{Attrs}
\end{equation}
is closed 
for all $1$-parameter subgroups
$$
\sigma: \Ct \to \bG
$$ 
in a certain cone in $\cochar(\bA)$.

In the nonabelian case, one may impose this condition for
$1$-parameter subgroups in a suitable $W$-invariant cone in
$\cochar(\bA)$. For simplicity, and with
concrete applications in mind, we \emph{assume} that the total attracting
set \eqref{Attrs} is closed for \emph{all} $\sigma$. 

\subsubsection{}\label{defpol} 
Given a K-theory class $\cV\in K_\bG(\bX)$, we will call its
polarization $\cV^{1/2}$ any solution of the equation 
$$
\cV^{1/2} + \left( \cV^{1/2} \right)^\vee = \cV\,. 
$$
In particular, we take as $\cV$ the tangent bundle to the quotient stack 
$$
 T \left[\bX/\bG\right] = T \bX - \fg_\C \,, 
$$
with the adjoint representation of $\bG$ on the second summand. In
what follow, we abbreviate $\fg_\C$ to $\fg$, as all vector bundles in
our context are complex.

We 
assume  $T \left[\bX/\bG\right]$
has a polarization which we denote by $\Ths$, or $T^{1/2}$
for brevity. One example of such situation is when $\bG$ acts on a
smooth quasiprojective $\bX'$ which has a polarization and $\bX = \bX'
\times \fg$. 

We assume that $\Ths$ has a lift to an element of
$K_{\bGt}(\bX)$, which is automatic if 
$\bGa$ is a connected factorial group, see \cite{Merk}. 

\subsubsection{}
Recall that a nonzero vector $v \in V\setminus 0 $ is called unstable
under the action of $\bG$ if $0 \in
\overline{\bG v}$. Clearly, this is invariant under dilation and hence 
defines the unstable locus $\bP(V)_\ust \subset \bP(V)$. One sets 
$$
\bX_\ust = \bX \cap \bP(V)_\ust
$$
and defines the semistable locus as the complement
$$
\bX_\sst = \bX \setminus \bX_\ust \,. 
$$
All these loci are $\bGt$-invariant. 

\subsubsection{}\label{s_res1} 
Consider  the functorial maps
\begin{equation}
  \label{st2X}
  \xymatrix{\Ell_\bGt(\bX_\sst) \ar[r]^{\iota_\sst} \ar[dr]_{p_\sst} & \Ell_\bGt(\bX)\ar[d]^{p} \\
    & \Ell_{\bGa}(\pt)}
\end{equation}
and the line bundle 
\begin{equation}
  \label{cSG}
  \cS = \Theta(\Ths) \otimes \bigotimes \cU(\cL_i,z_i)
\end{equation}
defined in both the source and the target of $\iota_\sst$. Here
$\cL_i$ are equivariant line bundles on $\bX$ with $\cL_1 = \Lamp$ and
the coordinates
$$
z_i \in E=\Ell_{U(1)}(\pt)
$$
are added to the base $\bB$ of the
elliptic cohomology as in Section 2.3.8 of \cite{part1}.

We will be
concerned with the restriction of section of $\cS$ from $\bX$ to the
stable locus $X_\sst$. This map is may be viewed as the inverse of the
stable envelope, and so we define
\begin{equation}
  \Stab^{-1} = \iota_\sst^*: \quad 
p_* \cS \to p_{\sst,*} \cS \,. 
  \label{Stabm1}
\end{equation}
Recall that stable envelopes are unique, but may have
poles. Reflecting this, the main
result of this Section is the following: 

\begin{Theorem}\label{t1}
The map $\Stab^{-1}$ is injective, with a torsion cokernel. 
\end{Theorem}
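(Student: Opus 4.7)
The plan is to argue by descending induction on a Hesselink--Kirwan--Ness stratification of the unstable locus. Under the standing hypothesis that $\Attr_\sigma$ is closed for every $1$-parameter subgroup $\sigma$, this stratification takes the form $\bX_\ust = \bigsqcup_\alpha S_\alpha$, where each stratum is a $\bG$-homogeneous bundle
$$
S_\alpha \;\cong\; \bG \times_{\bP_\alpha} Y_\alpha \,.
$$
Here $\sigma_\alpha$ is the (rational) cocharacter maximally destabilizing the points of $S_\alpha$, selected by Hilbert--Mumford via the linearization $\Lamp$; the parabolic $\bP_\alpha \subset \bG$ has Levi $L_\alpha = Z_\bG(\sigma_\alpha)$ and $\sigma_\alpha$-attracting unipotent radical; and $Y_\alpha$ is the $L_\alpha$-semistable part of $\Attr_{\sigma_\alpha}(Z_\alpha)$ for a component $Z_\alpha$ of $\bX^{\sigma_\alpha}$. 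The strata are partially ordered by the Hesselink norm $\|\sigma_\alpha\|$, and the closure of each stratum lies in the union of deeper strata together with $\bX_\sst$.

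First I would reduce, by iterated excision in equivariant elliptic cohomology, the overall claim for $\iota_\sst^*$ to a sequence of restriction statements, one for each stratum $S_\alpha$, of the form: restriction from a tubular $\bGt$-invariant neighborhood of $S_\alpha$ to its complement in that neighborhood has torsion cokernel. By the Thom isomorphism for the normal bundle $N_\alpha$, whose fiber over $Z_\alpha$ is the $\sigma_\alpha$-positive part of $T\bX \oplus (\fg/\fp_\alpha)$, combined with the homogeneous-bundle structure $S_\alpha \cong \bG \times_{\bP_\alpha} Y_\alpha$, this further reduces to a statement in $L_\alpha$-equivariant elliptic cohomology of $Z_\alpha$: multiplication by the elliptic Thom class $\Theta(N_\alpha)$ on the relevant sections of $\cS|_{Z_\alpha}$ is injective with torsion cokernel.

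The heart of the argument is then to verify the analog of the window condition of \cite{part1, DHLMO} for $\cS$ along each stratum. Using the polarization identity $T^{1/2} + (T^{1/2})^\vee = T\bX - \fg$, half of the weights of $N_\alpha$ are matched exactly by weights of $T^{1/2}|_{Z_\alpha}$, so the factor $\Theta(T^{1/2})$ in $\cS$ absorbs half of $\Theta(N_\alpha)$; the surviving $\sigma_\alpha$-degree of $\cS/\Theta(N_\alpha)$ is controlled by the contributions $\langle\sigma_\alpha,\cL_i\rangle\,z_i$ coming from the factors $\cU(\cL_i,z_i)$. Since $\Lamp$ is ample and $\sigma_\alpha$ is destabilizing, $\langle\sigma_\alpha,\Lamp\rangle$ is strictly negative on $Z_\alpha$, which forces the restriction map at the stratum $S_\alpha$ to be multiplication by a nonzero theta factor, hence injective with torsion cokernel as a map of $\cO_{\Ell_{\bGa}(\pt)}$-modules.

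I expect the main obstacle to be the bookkeeping in that final step. The presence of $\fg/\fp_\alpha$ in $N_\alpha$ is a genuinely nonabelian feature, absent from \cite{part1}, and one must check that the contributions from $T^{1/2}$ and from the $\cU(\cL_i,z_i)$ combine to produce the correct strict sign uniformly over the entire elliptic base $\Ell_{\bGa}(\pt)$ --- not merely at a generic point --- so that the cokernel is actually torsion rather than only generically zero. Once the stratum-local weight computation is in place, the descending induction closes in the standard way, with $\bX_\sst$ as the base case where $\iota_\sst^*$ is the identity.
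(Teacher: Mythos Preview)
Your outline coincides with the paper's: stratify $\bX_\ust$ by Kempf--Hesselink--Kirwan--Ness, induct over strata, and use the excision sequence together with the homogeneous-bundle description $\bG\times_{\sP_\alpha} Y_\alpha$ to reduce to a statement on the $\sigma_\alpha$-fixed locus. Your observation that the moving part of $T^{1/2}$ matches $N_\alpha$ up to duals, so that $\Theta(T^{1/2})$ absorbs the Thom twist in the $\sigma_\alpha$-direction, is exactly the computation the paper makes.

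Where your account becomes imprecise is the final vanishing step. It is not that the restriction map is ``multiplication by a nonzero theta factor,'' nor does one need any strict-sign window inequality. The paper's mechanism is Lemma~\ref{l1}: after tensoring with $\Theta(-N_\alpha)$, the bundle on the stratum has \emph{degree zero} along the $E$-fiber corresponding to the cocharacter $\sigma_\alpha$ (which is central in the Levi and acts trivially on $T^{1/2}_{Z_\alpha}$), while the factors $\cU(\cL_i,z_i)$ give it the twist $\sum_i \langle\sigma_\alpha,\cL_i\rangle\, z_i \in E$. A degree-zero line bundle on $E$ with nonzero twist has $H^0=H^1=0$, so $p_*$ of the kernel of restriction vanishes identically and its $R^1 p_*$ is supported on the resonant divisor $\sum_i \langle\sigma_\alpha,\cL_i\rangle\, z_i = 0$. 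All that is needed is $\langle\sigma_\alpha,\Lamp\rangle\neq 0$, which is automatic for a destabilizing $\sigma_\alpha$; the sign plays no role, and there is no further bookkeeping. The long exact sequence for $p_*$ then gives injectivity with torsion cokernel directly. In short, the elliptic argument is \emph{simpler} than the K-theoretic window story you are invoking, not a repackaging of it; this is precisely the ``strong technical advantage'' of elliptic cohomology the introduction alludes to.
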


\subsubsection{}

It is interesting to compare the ranks of $p_{\sst,*}
\cS$ and $p_* \cS$. If $\bG$ acts on $\bX_\sst$ with finite 
stabilizers, then the fibers of $p_\sst$ in \eqref{st2X} are
$0$-dimensional.
For instance, if $\bG$-action on $\bX_\sst$ is free, then
$p_{\sst,*} \cS$ has the same $(\bZ/2)$-graded rank as $\Hd(X\rdd
\bG)$.

On the other hand, typically, the fibers of $p_*$ are quotients of
abelian varieties by a finite group and the bundle $\cS$ is relatively
ample.  Thus the rank $p_* \cS$ may be computed from, essentially, the
top self-intersection of the divisor $\Theta(\Ths)$ on
$\Ell_\bG(\pt)$. 

In this connection, we note the following. First, while in this paper we
are emphasizing a geometric language and geometric
techniques, the phenomena we discussed are related to the fact that 
cohomological computations on $\bX\rdd \bG$ may be matched to residue computations in
certain integrals, see the discussion in Appendix \ref{s_A}. Second, in the bulk of
the paper, we will be concerned with sections of $\cS$ that have 
restricted support, as in \eqref{cSG3}. The sheaf in $\cS_\symr$ in
\eqref{cSG3} is not a line bundle and it is not so straightforward to
count its sections. Translated into the language of Mellin-Barnes
integrals, this means that certain poles don't correspond to
cohomology classes of $\bX\rd\bG$ because we force the integrand to annihilate the corresponding residues.




\subsubsection{}
A special case of Theorem \ref{t1} is when the stable locus is empty,
which thus implies $p_* \cS = 0$. The stable locus may be empty for
a trivial reason, namely if a connected subgroup of $\bG$ acts
trivially on $\bX$, but nontrivially on $\Lamp$. In particular, this
happens with the hypothesis of the following 

\begin{Lemma}\label{l1}
Suppose there exists
$$
\sigma: \Ct \to \textup{center}(\bG)
$$
which acts trivially on $\bX$. Consider
$$
\cS = \Theta(\cV) \otimes \bigotimes \cU(\cL_i,z_i) \,, 
$$
where $\sigma$ acts trivially on a 
vector bundle $\cV$ over $\bX$, but with certain weights
$n_i$ on line bundles $\cL_i$, not all of which are zero.
Then  $p_*
\cS = 0$ and $\bR^i p \cS$ is supported on the locus 
\begin{equation}
\sum n_i z_i = 0 \in E \,.\label{sumz}
\end{equation}
\end{Lemma}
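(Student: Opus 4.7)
The plan is to exploit the fact that a trivial central action of $\sigma$ produces an extra elliptic-curve factor $E_\sigma := \Ell_\sigma(\pt) \cong E$ inside $\Ell_\bGt(\bX)$, and then to show that $\cS$ restricts to a generically nontrivial line bundle of degree zero along these $E_\sigma$-fibers, whereupon the conclusion follows from base change. Because $\sigma$ lies in the centre of $\bG$ and acts trivially on $\bX$, the $\bGt$-action factors through $\bGt' := \bGt/\sigma(\Ct)$, so the natural map $\Ell_\bGt(\bX) \to \Ell_{\bGt'}(\bX)$ is a trivial $E_\sigma$-bundle. Since $\sigma \subset \bG = \ker(\bGt \to \bGa)$, the map $p$ from \eqref{st2X} factors through $\Ell_{\bGt'}(\bX)$, so the $E_\sigma$-fibers of the first map are contained in the fibers of $p$.

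Next I would restrict $\cS = \Theta(\cV) \otimes \bigotimes_i \cU(\cL_i, z_i)$ to an $E_\sigma$-fiber at fixed K\"ahler parameters. The factor $\Theta(\cV)$ is pulled back from $\Ell_{\bGt'}(\bX)$ since $\sigma$ acts trivially on $\cV$, so it is trivial along $E_\sigma$. For each $i$, the Poincar\'e-type construction of $\cU(\cL_i, z_i)$ from \cite{part1}, combined with the hypothesis that $\sigma$ acts on $\cL_i$ with weight $n_i$, identifies the restriction of $\cU(\cL_i, z_i)$ to $E_\sigma$ with the degree-zero line bundle on $E$ classified by the point $n_i z_i \in E$. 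Tensoring, the restriction of $\cS$ to an $E_\sigma$-fiber is the degree-zero line bundle on $E$ classified by $\sum n_i z_i$, which is trivial precisely when $\sum n_i z_i = 0 \in E$.

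The lemma then follows from cohomology and base change: a nontrivial degree-zero line bundle on an elliptic curve has vanishing $H^0$ and $H^1$, so off the divisor $\sum n_i z_i = 0$ the fiber cohomology of $\cS$ along $E_\sigma$ vanishes, and hence $\bR^i p\, \cS$ is set-theoretically supported on that divisor for $i \geq 1$. Moreover $p_* \cS = 0$ as a sheaf, since $\cS$ is a line bundle and any local section of $p_* \cS$ would give a section of $\cS$ over a preimage that vanishes on a dense open subset, hence vanishes identically. The main obstacle I anticipate is cleanly pinning down the second step — the explicit identification of $\cU(\cL_i, z_i)|_{E_\sigma}$ in the precise normalization of the Poincar\'e-type bundle used in \cite{part1}; once that restriction is in hand, the cohomological conclusion is routine.
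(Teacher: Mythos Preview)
Your proposal is correct and follows essentially the same approach as the paper: factor $p$ through $p_\sigma: \Ell_\bGt(\bX) \to \Ell_{\bGt/\sigma}(\bX)$, observe that $\Theta(\cV)$ is trivial on the $E$-fibers while $\bigotimes \cU(\cL_i,z_i)$ restricts to the degree-zero line bundle classified by $\sum n_i z_i$, and conclude via the vanishing of cohomology of nontrivial degree-zero line bundles on $E$. The paper's proof is terser but identical in content, so your worry about the normalization of $\cU$ is the only thing you would need to nail down, and that is indeed routine once the conventions of \cite{part1} are unpacked.
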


\subsection{Proof of Theorem \ref{t1}}

\subsubsection{Proof of Lemma \ref{l1}}

The consider the pushforward of $\cS$ under the map
$$
p_\sigma: \Ell_\bGt(\bX) \to \Ell_{\bGt/\sigma}(\bX)\,, 
$$
which is a fibration with fiber $E$. The bundle $\Theta(\cV)$ is 
trivial on the fibers of $p_\sigma$, while the bundle $\bigotimes\cU(\cL_i,z_i)$ has
degree $0$ and nontrivial away from \eqref{sumz}. Thus already $\Rd
p_\sigma \cS$ has the required support and the lemma follows. 



\subsubsection{} \label{s_Xi_induct}

Consider the stratification of the unstable locus, as constructed in
\cites{Bogo,Hess,Kempf,Ness,Rous}, see e.g.\ Chapter 5 in \cite{VinPop} for an exposition. It takes
the form
\begin{equation}
\bX_\ust=\bX_1 \supset \bX_2 \supset \bX_3 \dots\label{XiX}
\end{equation}
On each step of \eqref{XiX}, we have
\begin{equation}
  \bX_i \setminus \bX_{i+1} \cong \bG \times_{\sP_i}
  \Attr_{\sigma_i}(F_i)\label{XXG}\,, 
\end{equation}
where
\begin{enumerate}
\item[(1)] $\sigma_i: \Ct \to \bG$ is a one-parameter subgroup,
\item[(2)] $\sP_i \subset \bG$ is a parabolic subgroup such that its Lie
  algebra 
  $$
  \fp_i = \Attr_{\textup{Ad}(\sigma_i)} \subset \fg 
  $$
  is the subspace of nonnegative weights with respect to the
  adjoint action of $\sigma_i$, 
\item[(3)] $F_i$ is an open subset of $\bX^{\sigma_i}$ and $\sigma$
  acts with a \emph{negative} weight on $\Lamp\big|_{F_i}$. 
\end{enumerate}

\subsubsection{}

We can factor the restriction from $\bX$ to $\bX_\sst = \bX \setminus
\bX_1$ into a sequence of restrictions from $\bX\setminus \bX_{n+1}$
to $\bX\setminus \bX_{n}$. Thus, by induction, we may assume that
$\bX_2 = \varnothing$.

Consider the diagram 
\begin{equation}
\xymatrix{\bX_1 = \bG \times_{\sP_1}
  \Attr_{\sigma_1}(F_1) \ar[rr]^{\qquad \qquad j} \ar[d]_\pi  &&\bX \\
 \bG \times_{\sP_1}
  F_1 } \,,\label{XpiY}
\end{equation}
in which $\pi$ is a fibration in affine spaces and, in particular, an
isomorphism in elliptic cohomology. In \eqref{XpiY}, $\sP_1$ acts on $F_1$ via the
projection to its Levi factor $\bL_1$.

\subsubsection{}

Observe that 
$$
N_{\bX/\bX_1} = N_{\bX/F_1,<0} - \fg_{<0}\,,
$$
where subscripts indicate repelling weights for $\sigma_1$.

Consider the decomposition
$$
T^{1/2}\big|_{F_1} = T^{1/2}_{F_1} + T^{1/2}_\textup{moving}
$$
into fixed and moving parts for the action of 
$\sigma_1$. Up to
duals and weights of $\bGa$, the moving part $
T^{1/2}_\textup{moving}$ coincides with
$N_{\bX/\bX_1}$. Therefore, see the discussion in Section 2.2 of
\cite{part1}, we have 
$$
\Theta(-N_{\bX/\bX_1}) \otimes j^*\cS = \pi^* 
\Theta(T^{1/2}_{F_i}) \otimes \bigotimes \cU(\cL_i, z_i') \otimes
\dots \,, 
$$
where dots denote a line bundle pulled back from
$\Ell_\bGa(\pt)$, the set $\{\cL_i\}$ may have become larger, and 
the K\"ahler variables include shifts by weights of $\bGa$.

\subsubsection{}

As in Section 2.5.7 of \cite{part1}, we have an exact sequence 
$$
0 \to \Theta(-N_{\bX/\bX_1}) \to \cO_{\Ell_\bG(\bX)} \to
\cO_{\Ell_\bG(\bX\setminus \bX_1)} \to 0 \,. 
$$
It follows from Lemma \ref{l1} that, after we tensor this exact
sequence with $\cS$,  we get
$$
0 \to  \Theta(-N_{\bX/\bX_1}) \otimes \cS \to \cS_{\Ell_\bG(\bX)} \to
\cS_{\Ell_\bG(\bX\setminus \bX_1)} \to 0 \,,
$$
in which the kernel has no $\bR^0p$ and torsion $\bR^1p$. This
completes the induction step and the proof of the theorem.

\subsection{Symplectic reductions}\label{s_symplr} 

\subsubsection{}

We now suppose that $\bX$ is a holomorphic symplectic variety and
$\bG$ acts on it
with a holomorphic moment map
\begin{equation}
  \label{mu}
  \mu: \bX \to \fgb^\vee \, .
\end{equation}
We assume $\mu$ is equivariant for a certain linear action of $\bGt$ on
its target and we use boldface symbol $\fgb$ to indicate the required
$\bGt$-module structure in the target. For instance, if $\bGt$ scales
the symplectic form with weight $\hbar$ then
\begin{equation}
  \label{fgbhbar}
  \fgb = \hbar \otimes \fg \,, 
\end{equation}
where $\bGt$ acts on
$
\fg \triangleleft \Lie(\bGt)
$
by the adjoint representation. In practice, this is the important
case.

\subsubsection{}

{}From definitions, 
\begin{equation}
  \label{immu}
  T_x \bX \xrightarrowdbl{\quad d\mu \quad} \fg_x^\perp \subset \fg^*\,, 
\end{equation}
where $\fg_x\subset \fg$ denotes the stabilizer of $x$, $\fg_x^\perp$
its annihilator in $\fg^*$, and the map in \eqref{immu} is surjective.

We assume that there are no strictly semistable points in
$\mu^{-1}(0)$, that is, we \emph{assume} that $\bG$ acts on
$\mu^{-1}(0)_\sst$ with finite stabilizers. Recall that the quotient
$$
\bX\rd \bG = \mu^{-1}(0)_\sst / \bG
$$
is called the algebraic symplectic reduction of $\bX$. With our
assumption, this is a smooth orbifold.
Note that $0\in \fg^*$ is fixed by $\bGa$ and hence this group acts on
$\bX \rd \bG$.

\subsubsection{}

We would like to apply Theorem \ref{t1} to the action of $\bG$ on 
\begin{align}
  \label{eq:1}
  \bY &= \mu^{-1}(0)\,, \\
  T^{1/2} \left[\bY/\bG\right] &= T^{1/2} \bX - \fg_\C \,, 
\end{align}
except $\bY$ is not smooth and so Theorem \ref{t1} does not apply as
stated. Instead, we will push everything forward to $\bX$ and rephrase
Theorem \ref{t1}  as a statement there, with supports on $\bY$. 

\subsubsection{}
As a replacement of the line bundle $\cS$ in \eqref{cSG}, we take the sheaf 
\begin{equation} 
  \label{cSG3}
  \cS_\symr = \Ker \left( \cS^\circ \to \cS^\circ|_{\bX\setminus
    \bY} \right)\,,  
\end{equation}
where
\begin{equation} 
  \label{cSG2}
  \cS^\circ = \Theta(T^{1/2} \bX) \otimes \bigotimes \cU(\cL_i,z_i) 
\end{equation}
is the bundle \eqref{cSG} corrected for the normal bundle
$N_{\bX/\bY}$.

\subsubsection{}

As in Section \ref{s_res1}, we consider the map: 
\begin{equation}
  \Stab^{-1}_\symr  = \iota_\sst^*: \quad 
p_* \cS_\symr  \to p_{\sst,*} \cS_\symr  \,. 
  \label{Stabmmu}
\end{equation}
The sympletic reduction version of Theorem \ref{t1} is the following 

\begin{Theorem}\label{t2}
The map $\Stab^{-1}_\symr$ is injective, with a torsion cokernel. 
\end{Theorem}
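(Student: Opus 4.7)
The plan is to follow the inductive scheme of Theorem~\ref{t1} verbatim, carrying the support-on-$\bY$ condition through each step. Both the kernel and the cokernel of $\Stab^{-1}_\symr$ are computed by derived sections of $\cS_\symr$ supported on the unstable locus $\bX_\ust$, so it suffices to show that for each stratum of the Hesselink--Kempf--Ness stratification $\bX_\ust = \bX_1 \supset \bX_2 \supset \dots$ of Section~\ref{s_Xi_induct}, the sections of $\cS_\symr$ supported on $\bX_i \setminus \bX_{i+1}$ push forward under $p$ to zero, with torsion $\bR^1 p$.

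Inducting on the stratification as in the proof of Theorem~\ref{t1}, I reduce to the case $\bX_2 = \varnothing$, so that $\bX_1$ is the single stratum $\bG \times_{\sP_1} \Attr_{\sigma_1}(F_1)$. Starting from the short exact sequence
$$
0 \to \Theta(-N_{\bX/\bX_1}) \otimes \cS^\circ \to \cS^\circ \to \cS^\circ\big|_{\bX \setminus \bX_1} \to 0
$$
on $\Ell_\bGt(\bX)$ from Section~2.5.7 of \cite{part1}, I would apply the local-cohomology functor of sections supported on $\bY$. Since all three terms are line bundles, this produces a long exact sequence
$$
0 \to \cK \to \cS_\symr \to \cS_\symr\big|_{\bX \setminus \bX_1} \to \cK' \to \cdots,
$$
where $\cK = \Gamma_\bY(\Theta(-N_{\bX/\bX_1}) \otimes \cS^\circ)$ and $\cK'$ is the corresponding $\bR^1 \Gamma_\bY$, both supported on $\bY \cap \bX_1$. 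Injectivity with torsion cokernel of $\Stab^{-1}_\symr$ then reduces to the claim that $p_* \cK = 0$ and $\bR^1 p\, \cK'$ is torsion.

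Via the isomorphism $\pi: \bX_1 \xrightarrow{\sim} \bG \times_{\sP_1} F_1$ in elliptic cohomology, the same computation as in the proof of Theorem~\ref{t1} identifies $\Theta(-N_{\bX/\bX_1}) \otimes \cS^\circ$ with a pullback along $\pi$ of a sheaf of the shape $\Theta(T^{1/2}_{F_1}) \otimes \bigotimes \cU(\cL_i, z_i')$ on $F_1$, which satisfies the hypotheses of Lemma~\ref{l1} with respect to the central subgroup $\sigma_1 \subset \bL_1$ acting trivially on $F_1$ but with a negative weight on $\Lamp\big|_{F_1}$. Crucially, the $\bG$-equivariance of $\mu$ forces $\mu|_{F_1}$ to land in the $\sigma_1$-invariant part $\hbar \otimes \Lie(\bL_1)^\vee$ of $\fgb^\vee$, so $\bY \cap F_1$ coincides with the moment-map preimage of $0$ for the induced $\bL_1$-action on $F_1$; in particular $\bY \cap F_1$ is $\sigma_1$-invariant.

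The main obstacle I anticipate is the final step: adapting Lemma~\ref{l1} from line bundles to the with-support sheaves $\cK$, $\cK'$. The proof of Lemma~\ref{l1} proceeds by factoring $p$ through $p_{\sigma_1}: \Ell_\bGt(F_1) \to \Ell_{\bGt/\sigma_1}(F_1)$ and using that the line-bundle factor $\bigotimes \cU(\cL_i, z_i')$ is degree zero and nontrivial on the fibers of $p_{\sigma_1}$. Since $\bY \cap F_1$ is $\sigma_1$-invariant, the local-cohomology functor $\Gamma_{\bY \cap F_1}$ commutes with $p_{\sigma_1,*}$, so the projection-formula argument of Lemma~\ref{l1} still yields $p_{\sigma_1,*}\cK = 0$ and $\bR^1 p_{\sigma_1}\, \cK'$ supported on the resonant divisor $\sum n_i z_i' = 0$, which is torsion upon further push-forward to $\Ell_\bGa(\pt)$. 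Combined with the long exact sequence above, this closes the induction and proves Theorem~\ref{t2}.
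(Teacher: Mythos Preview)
Your proposal diverges from the paper's proof in a significant way, and the divergence contains a real gap.

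The paper treats injectivity and surjectivity by two \emph{different} arguments. For injectivity it does something close to what you sketch: a section in the kernel is supported on $\bY\cap\bX_1$, and the paper observes that this support is contained in the \emph{smooth} locus
\[
\bX'_1=\bG\times_{\sP_1}\bigl(\Attr_{\sigma_1}(F_1)\cap\mu^{-1}(\fp_1^\perp)\bigr),
\qquad
N_{\bX/\bX'_1}=N_{\bX/F_1,<0}-\fg_{<0}+\fg^*_{>0}.
\]
Because the last two summands are dual up to $\bGa$-twists, $\deg_{\sigma_1}\Theta(-N_{\bX/\bX'_1})\otimes\cS_\symr=0$ and Lemma~\ref{l1} applies. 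Your injectivity argument is in the right spirit but never isolates $\bX'_1$ or verifies this degree-zero condition; reducing to $F_1$ alone does not suffice, since the support condition on $\bY$ cuts out extra equations $\mu\in\fp_1^\perp$ along the attracting directions, and it is precisely those extra equations that make the degrees balance.

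For surjectivity the paper does \emph{not} run the induction at all. Instead it passes to the smooth auxiliary space $\tX=\bX\times\fg$ with superpotential $W(x,\xi)=\langle\mu(x),\xi\rangle$, pushes the given section $s$ to the descending manifold of $\Re W$ inside $\tX_\sst$, applies Theorem~\ref{t1} on $\tX$ to extend it (this is where smoothness is essential), and then pulls back along the antiholomorphic graph $f(x)=(x,\mu(x)^*)$, whose image lies in $\{\Re W\ge 0\}$ so that $f^*s''$ is automatically supported on $\bY$. This is a constructive extension, not an obstruction-theoretic one.

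Your uniform approach tries to control the cokernel via $\cK'=\bR^1\Gamma_\bY\bigl(\Theta(-N_{\bX/\bX_1})\otimes\cS^\circ\bigr)$, but this is where the gap lies. Lemma~\ref{l1} is a statement about \emph{line bundles} of degree zero along the $E$-fiber of $p_{\sigma_1}$. The sheaf $\cK'$ is a higher local-cohomology sheaf along the \emph{singular} locus $\bY$, and you give no argument that it is a line bundle, nor any computation of its degree along the $\sigma_1$-fiber. The projection-formula step you invoke would need $\cK'$ to be the tensor product of a $\sigma_1$-trivial factor with a degree-zero, generically nontrivial line bundle on the $E$-fiber; nothing in your outline establishes this. (Also, the bookkeeping is off: the obstruction to surjectivity after applying $p_*$ is governed by $\bR^1 p_*\cK$ and $p_*\cK'$, not by $\bR^1 p_*\cK'$.) The paper's detour through $\tX$ exists precisely to sidestep this difficulty by replacing local cohomology on singular $\bY$ with ordinary sections on a smooth space.
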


\noindent 
We will give separate arguments for injectivity and surjectivity in
Theorem \ref{t2} below. 

\subsubsection{}\label{s_GL}
In the special case
$$
\bG = \prod GL(n_i)
$$
there is a different, and more direct proof of Theorem
\ref{t2}. Namely, the construction given in Section 2.1 of \cite{Bethe}
extends verbatim to elliptic cohomology and, moreover, describes
the map $\Stab$ in Theorem \ref{t2} in terms of stable envelopes for
an auxiliary action of $\Ct$. See also Section 1.3.3 in \cite{Bethe} for an
$R$-matrix formula for the resulting map. 

\subsubsection{Proof of injectivity}

We induct as in Section \ref{s_Xi_induct}. For the induction step, we may assume that $\bX_2 =
\varnothing$. Let $s$ be in the kernel of the restriction to $\bX
\setminus \bX_1$. Then the support of $s$ is contained in
$$
\bX'_1 = \bG \times_{\sP_1}
 \left( \Attr_{\sigma_1}(F_1) \cap \mu^{-1}(\fp_1^\perp)\right)\,. 
 $$
 Since the infinitesimal stabilizer of every point $x\in F_1$ is
 contained in $\fp_1$, we see from \eqref{immu} that $\bX'_1$ is a 
smooth manifold with normal bundle
$$
N_{\bX/\bX'_1} = N_{X/F_1,<0} - \fg_{<0} +\fg^*_{>0} \,. 
$$
Note that the two last terms are dual up to the action of $\bGa$. As
the twists by $\bGa$ do not affect the degree in the variables in
$\bG$, we see that
$$
\deg_{\sigma_1} \Theta(-N_{\bX/\bX'_1}) \otimes \cS_\symr  = 0\,, 
$$
and thus Lemma \ref{l1} applies showing $s=0$.

\subsubsection{Proof of surjectivity}

This will take several steps. In the course of the proof, we will
consider the manifold 
\begin{equation}
\tX = \bX \times \fg \label{def_tX}
\end{equation}
with the adjoint action of $\bG$ on the second factor and 
with a $\bG$-invariant function 
$$
W(x,\xi) = \langle \mu(x), \xi \rangle \,, \quad \xi \in \fg \,. 
$$
These objects are standard in supersymmetric gauge theory literature,
where $\bG$ is a complexification of the gauge group and $\xi$ are
superpartners of the gauge fields.

In gauge theory context, the function $W$ is complemented
by the analogous function $W_\R$ for the real moment map
$$
\mu_\R: \bX \to \fg^*_\comp
$$
and $X\rd \bG$
appears as the critical locus of the combined function
$W+W_\R$ modulo the group of constant gauge transformations, which is
the 
compact form $\bG_\comp\subset \bG$. Compare with Lemma \ref{l_crit} below.

 \subsubsection{}

Consider the semistable locus $\tX_\sst$ and the restriction of the
function $W$ to it. There is the following basic lemma, see
e.g.\ the proof of Lemma 5.4 in \cite{HLS} for the
proof, 

\begin{Lemma}\label{l_crit}
The critical locus of $W$ in the semistable locus $\tX_\sst$ is
$\bY_\sst \times \{0\}$. 
\end{Lemma}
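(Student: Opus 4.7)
The plan is to compute the differential of $W$ directly, identify its vanishing locus via the defining property of the moment map, and then use the standing finite-stabilizer hypothesis to eliminate the $\xi$-direction.

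First I would write, for $(v,\eta) \in T_x\bX \oplus \fg$,
$$
dW_{(x,\xi)}(v,\eta) = \langle d\mu_x(v), \xi\rangle + \langle \mu(x), \eta\rangle .
$$
Setting this equal to zero for all $(v,\eta)$ separates into two conditions: varying $\eta$ gives $\mu(x) = 0$, i.e.\ $x\in\bY$, and varying $v$ gives $d\mu_x^*(\xi) = 0$ in $T_x^*\bX$, where $d\mu_x^*\colon\fg\to T_x^*\bX$ is the transpose of \eqref{immu}.

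Next I would rewrite the second equation using the defining property of the moment map, $d\langle \mu,\xi\rangle = \iota_{\xi_\bX}\omega$, where $\xi_\bX$ denotes the vector field on $\bX$ generated by $\xi$. This identity gives $\langle d\mu_x(v),\xi\rangle = \omega_x(\xi_\bX(x), v)$, so $d\mu_x^*(\xi)=0$ is equivalent to $\omega_x(\xi_\bX(x),\cdot)=0$. Nondegeneracy of the symplectic form then forces $\xi_\bX(x) = 0$, which means exactly that $\xi$ lies in the Lie algebra $\fg_x$ of the stabilizer of $x$.

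To conclude, I would note that the linearization used to define $\tX_\sst$ is pulled back from $\Lamp$ on $\bX$, so the relevant semi-invariant sections are independent of $\xi\in\fg$; consequently $\tX_\sst = \bX_\sst \times \fg$. The standing assumption in Section \ref{s_symplr} that $\bG$ acts on $\mu^{-1}(0)_\sst = \bY_\sst$ with finite stabilizers gives $\fg_x = 0$ for every $x\in\bY_\sst$, so the second critical equation forces $\xi=0$. Combined with $x\in\bY_\sst$ from the first equation and the semistability hypothesis, this identifies the critical locus in $\tX_\sst$ as $\bY_\sst\times\{0\}$.

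The only mildly subtle step is pinning down $\tX_\sst = \bX_\sst\times\fg$, which depends on confirming that the chosen linearization on $\tX$ is trivial along the $\fg$ factor; once that is fixed, the remainder reduces to the standard moment-map identity and the finite-stabilizer hypothesis already imposed on $\bY_\sst$.
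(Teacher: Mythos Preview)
Your computation of $dW$ and the identification of the full critical locus as $\{(x,\xi): \mu(x)=0,\ \xi\in\fg_x\}$ via the moment-map identity is correct and is the right first step. The gap is the claim $\tX_\sst = \bX_\sst \times \fg$. It is true that the linearization is pulled back from $\bX$, but this does \emph{not} force the $\bG$-invariant sections to be pulled back: one has $H^0(\tX,\Lamp^n)^\bG = \big(H^0(\bX,\Lamp^n)\otimes\C[\fg]\big)^\bG$, and the invariants of this tensor product can genuinely mix the two factors. In the paper's own example of $T^*\Gr(k,n)$ (Appendix~\ref{s_A}, stability $\cL_+$) one has $\tX_{\sst,+} = \{\sum_i \Image(\xi^i A) = V\}$, which strictly contains $\bX_{\sst,+}\times\fg = \{A \textup{ surjective}\}\times\fg$. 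So the inclusion you actually need, $\tX_\sst \subset \bX_\sst\times\fg$, fails in general.

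What survives, and suffices, is the weaker implication: if $(x,\xi)\in\tX_\sst$ with $\xi\in\fg_x$, then $x\in\bX_\sst$. This requires Kempf's theorem on the optimal destabilizing one-parameter subgroup: if $x$ were unstable in $\bX$, the optimal $\sigma$ has the property that its associated parabolic $P(\sigma)$ contains the full stabilizer $\bG_x$; hence $\xi\in\fg_x\subset\Lie P(\sigma)$, so $\lim_{t\to 0}\textup{Ad}(\sigma(t))\xi$ exists, and then $\sigma$ destabilizes $(x,\xi)$ in $\tX$ as well, a contradiction. Once $x\in\bX_\sst\cap\bY = \bY_\sst$, your finite-stabilizer step correctly finishes the argument. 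The paper does not spell any of this out and simply refers to the proof of Lemma~5.4 in \cite{HLS}, where an argument of this type is carried out.
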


\noindent
We abbreviate $\bY_\sst \times \{0\}$ to $\bY_\sst$. 
By our assumption, the action of $\fg$ is free and the map $d\mu$ is a
submersion in a
neighborhood
of $\bY_\sst$. Therefore, 
$$
N_{\tX/\bY_\sst} \cong \bY_\sst \times \fg \times \fg^* \,, 
$$
and the Hessian of $W$ is the canonical pairing of $\fg$ and $\fg^*$,
in particular, nondegenerate in the normal directions to $\bY_\sst$. Further, the ascending and descending
manifold of
$\Re W$ locally have the form
$$
\{(\xi,\pm \xi^*)\} \subset \fg \times \fg^* \,, 
$$
where the antilinear adjoint
$$
\fg \owns \xi \mapsto \xi^* \in \fg^* 
$$
is with respect ot a $\bG_\comp$-invariant
Hermitian form on $\fg$.

\subsubsection{}\label{s_spp}

Consider the diagram
\begin{equation}
  \label{Desc}
  \xymatrix{\Desc \ar[r]^\iota \ar[d]_\pi & \tX_\sst \\
    \bY_\sst\,,}
\end{equation}
where $\Desc$ is the descending manifold of $\Re W$ inside $\tX_\sst$,
$\iota$ is the inclusion, and $\pi$ is the 
natural projection to the critical locus.

Let $s$ be a section of $\cS_\symr$ on $\bX_\sst$. By assumption it is
a push-forward of a section $s'$ on $\bY_\sst$.  We have
$$
N_{\bX/\bY_\sst} \cong \bY_\sst \times \fg  \cong N_{\tX/\Desc}  \,. 
$$
Therefore $s'' = \iota_\circledast\,  \pi^* s'$ is a section of
$\cS^\circ$, supported on the descending manifold inside the semistable
locus of $\tX$.  We have
$$
T^{1/2} \left[ \tX / \bG \right] = T^{1/2} X \,. 
$$
Therefore, by Theorem \ref{t1}, $s''$ may be extended to a section of
$\cS^\circ$ on the whole of $\tX$, probably with some poles in the
variables $z_i$ and $\bT/\bA$.

Since the function $W$ is $\bG$-invariant, the section $s''$ is
supported on the subset $\Re W \le 0$. 

\subsubsection{}
Consider the map
$$
\bX \owns x \xrightarrow{\quad f \quad} (x,\mu(x)^*) \in \tX\,.
$$
Clearly
$$
W(f(x)) = \| \mu(x) \| \ge 0
$$
and thus $f^* s''$ is supported on $\bY$.

Further, on the semistable
locus, the map $f$ realizes the neighborhood of $\bY_\sst \subset
\bX_\sst$ as the ascending manifold of $\Re W$. Thus 
$$
f^* s'' \big|_{\bX_\sst} = s \,.
$$
and the proof of the surjectivity is complete.

\subsubsection{Remark}\label{s_rmks} 

We note that $\tX$ is an equivariant vector bundle over $\bX$ and thus
any elliptic cohomology class on $\tX$ is a pullback of an elliptic
cohomology class from the base $\bX$. In particular, $s''$ is the
pullback of $f^* s'' = \Stab_\symr (s)$. 

\section{Vertex functions and Monodromy}

\subsection{Quasimaps and vertex functions}\label{s_quasimaps} 

\subsubsection{}
We briefly recall the setup. The reader will find a detailed, or 
a short, 
introduction in \cite{pcmi} and \cite{ese,Bethe}, respectively.

In this section, we take $E=\Ct /q^\Z$ as the elliptic curve of the
elliptic cohomology.

\subsubsection{}

Let $\bG$ be a linear algebraic group acting on algebraic variety 
$\bY$. By definition, a map $f: C \to [\bY/\bG]$ to the corresponding
quotient stack is a principal
$\bG$-bundle $\cP$ over the domain $C$ together with a section
$$
f: C \to \cP \times_\bG \bY \,, 
$$
of the associated $\bY$-bundle over $C$.

Good moduli spaces of \emph{stable} maps $f$ as above may be
constructed if $\bY$ is affine, $\bG$ is reductive, and $C$ is a curve
with at worst nodal singularities, see \cite{CFKM} and references
therein. By definition, $f$ is stable if it evaluates to an orbit in
$\bY_\sst$ at all but finitely many nonsingular points of $\{b_i\}
\subset C$. There points are called the \emph{base points}, or the
singularities, of $f$. Here $\bY_\sst \subset \bY$
is defined using GIT stability with respect to
an ample $\bG$-equivariant line bundle $\Lamp$.

\subsubsection{}

For historical reasons, and to distinguish these object to from
ordinary
maps to the GIT quotient, one calls them \emph{quasimaps}.
We denote by
$$
\QM(C\to \bY\rdd\bG) \subset \Maps(C \to  [\bY/\bG]) 
$$
the corresponding moduli space. We
may 
drop the source $C$, the target $\bY\rdd\bG$, or both from this
notation when they are understood.

\subsubsection{}\label{s_0L} 

The degree of a quasimap may  be measured by pairing it with line
bundles, namely 
\begin{equation}
\langle\deg f, \cL' \rangle  = \deg f^* \cL' \,, \quad \cL' \in
\Pic_\bG(\bY)_\tp \,, \label{Lnum} 
\end{equation}
where the subscript means that we take the quotient modulo the
topological equivalence. Thus the symbol $z^{\deg f}$ is a character of the torus 
$$
\bZ = \Pic_\bG(\bY)_\tp \otimes \Ct  \owns z \,. 
$$
This torus is usually called the \emph{K\"ahler} torus. Generating
functions counting maps of all possible degree are functions (or
formal functions) on $\bZ$. 

The Lie
algebra
$$
\Lie_\R \bZ = \Pic_\bG(\bY)_\tp\otimes\R
$$
contains a fan of cones formed by the images of the ample cones of 
$\bY\rdd\bG$ taken with all possible stability conditions $\cL$. 
The toric variety $\bZb \supset \bZ$ corresponding to this fan 
will be important
below. Every cone, that is, every stability parameter $\Lamp$ defines a point
\begin{equation}
0_\Lamp\in \bZb \,, \label{0L}
\end{equation}
which is fixed by the action of $\bZ$. 

\subsubsection{}\label{sQMass}

Our main object of interest is the case when
\begin{equation}
\bY = \mu^{-1}(0)\label{bY}
\end{equation}
for a Hamiltonian action of a connected reductive group $\bG$ on a
smooth affine algebraic symplectic variety $\bX$. From now on, we
assume that we are in this setting. Additionally, we make the
following simplifying assumptions:
\begin{enumerate}
\item[(1)] We assume that the action of $\bG$ on $\bY_\sst$ is
  free. Probably, actions with finite stabilizers in the semistable
  locus are not much more difficult, but this adds an extra layer of
  complexity to what is already a rather complicated argument.
\item[(2)] We assume that $K_\bGt(\bX)$ generates $K_\bGt(\bY_\sst)$
  by restriction. Such statements, often called 
  hyperk\"ahler Kirwan surjectivity theorems, are known for
  quiver varieties \cite{McGN}. In any event, the image of
  $K_\bGt(\bX)$ is a canonical subspace of $K_\bGt(\bY_\sst)$ and we
  compute the monodromy of the quantum difference equation in that
  subspace.
\item[(3)]
  We assume the polarization $T^{1/2}$ of $\bX$ lifts to a $\bGt$-equivariant
  K-theory class such that 
  $$
  TX = T^{1/2} + \hbar^{-1} \otimes \left( T^{1/2}\right)^\vee \,, 
  $$
  where $\hbar$ is a character of $\bGt$. This means that $\bGt$
  scales the symplectic form with the character $\hbar$ and that 
  $$
  \fgb = \hbar \otimes \fg\,,
  $$
  as $\bGt$-module. This is what happens in the situation of maximal
  interest to us and allows complete matching with the setup of
  \cite{pcmi}
  in what concerns the symmetrized virtual structure sheaves
  $\tO_\vir$ etc. 
\end{enumerate}
We note that the really interesting case is when the character $\hbar$
is nontrivial, because otherwise the quantum difference equation reduces to
its classical part and becomes an equation with constant
coefficients. Constant coefficient equations have trivial monodromy,
up to normalizations.

\subsubsection{} 
An obstruction theory for $\QM$ may be constructed from an obstruction
theory on $\bY$ and the obstruction theory of maps from curves, see
\cite{CFKM}. With the hypotheses of Section \ref{sQMass}, the obstruction
theory is perfect and produces the canonical virtual structure sheaf
$$
\cO_\vir \in K_{\bGa \times \Aut(C)} (\QM) \,. 
$$
Applications in both theoretical physics and enumerative geometry 
dictate the need to work with a \emph{symmetrized} virtual structure
sheaf $\tO_\vir$, which is $\cO_\vir$ twisted by line bundle closely
related to a square root of the virtual canonical line bundle for
$\QM$. See \cite{pcmi} for a detailed discussion. A formula for $\tO_\vir$
appears in \eqref{tOvir} below. 

While the existence of a required square root may be shown on abstract
grounds as in \cite{NO}, it is convenient to pick an explicit sheaf
$\tO_\vir$ using a polarization of $\bX$, see Section 6.1.8  in
\cite{pcmi}.  Since a polarization of $T^{1/2} \bX$ is available in
cases of maximal interest, we will assume that one is fixed.

\subsubsection{}
The moduli space $\QM(C)$ is constructed over the moduli stack of
nodal curves $C$ of a given genus $g$. The curve $C$ may 
may be additionally equipped with nonsingular
distinct marked points $\{p_1,\dots,p_n\}$ none of which is base point
of the quasimap. This gives a map
$$
\pi: \QM_{g,n} \to \Mbar_{g,n} \times (\bX\rd \bG)^n\,, 
$$
which records the moduli of $(C,p_1,\dots,p_n)$ and the values
$$
(f(p_1),\dots,f(p_n)) \in (\bX\rd \bG)^n \,. 
$$
The partition function, or the index, defined by 
\begin{equation}
Z_{g,n} = \pi_* \left(z^{\deg f} \tO_\vir\right) \in K_{\textup{eq}}(\Mbar_{g,n}
\times (\bX\rd \bG)^n) [[z]] \,, \label{Zgn}
\end{equation}
gives a K-theoretic analog of a CohFT and is an object
of interest significant interest for both theoretical physicists and
enumerative geometers. The $z$-expansion in \eqref{Zgn} is about the point
\eqref{0L}. 

For instance, if $\bX\rd \bG$ is the moduli of
coherent sheaves on a surface $S$, which by the work of Nakajima \cite{Nak1}
happens when $S$ is an ADE surface, then \eqref{Zgn} gives 
equivariant Donaldson-Thomas counts of sheaves on threefolds that
fiber in $S$ over curves, see \cite{Takagi} for an introduction. 

\subsubsection{} 

One can introduce another set of marked points $\{p'_1,p'_2, \dots\}
\subset C$, which are not required to be distinct or nonsingular for
$f$. At such points $f$ evaluates to a point in the quotient stack.
Therefore, these marked points are associated with classes in
$K_{\bGt}(\bY)$.

\subsubsection{}

The following reformulation of the above enumerative setup is
motivated by both the technical advantages which it offers and from the point of
view of the
origin of the problem in supersymmetric gauge theories.

The moment map
\eqref{mu} extends to a $\bGt$-equivariant section of the
following sheaf
\begin{equation}
\bmu: \cO_{\Maps(C \to [\bX/\bG])} \to  H^0(C, \fgb^\vee_\cP) \,, \quad  \fgb^\vee_\cP = \cP
\times_{\bG}\fgb \,, 
\label{mumap}
\end{equation}
over the moduli of maps to $ [\bX/\bG]$. Here $H^0(\dots)$ means the
push-forward along the universal curve in the situation when the
source of the map varies in moduli. The quasimaps to $\bY$
are cut out by the equations $\bmu=0$. 

Instead of setting $\bmu$ to zero, one may introduce dual
variables $\xi$ taking values in the bundle $\fgb_\cP\otimes \cK_C$,
where $\cK_C$ is the canonical line bundle of $C$ with its natural
action of $\Aut(C)$. In
gauge theories, these are the superpartners of the gauge field,
twisted by $\cK_C$. In mathematics, these are called $p$-fields, see \cite{mixp}
for a comprehensive survey. We denote by
\begin{align}
  \Maps_\xi(C\to [\bX/\bG]) = \Big\{  &\textup{a principal $\bG$-bundle
                                        $\cP$ over $C$,} \notag \\
                                      &\textup{a section $f: C \to
                                        \cP\times _\bG \bX$, and} \notag\\
                                        &\textup{a section $\xi: C \to
                                         \fgb_\cP \otimes \cK_C$} \Big\}
                                          \Big/ \cong\label{Mapsxi}
\end{align}
the corresponding moduli
space. By definition, its subset $\QM_\xi\subset \Maps_\xi$ is formed
by maps that evaluate to a stable point outside a finite set of base
points. 

Serre duality 
$$
H^0(C, \fgb^\vee_\cP) \otimes H^1(C, \fgb_\cP \otimes \cK_C) \to H^1(C, \cK_C)=
\C 
$$
gives a function on the obstruction space of $\xi$, linear in $\xi$,
with the critical locus $\bmu=0$. This puts the problem into the
framework of cosection-localized virtual classes \cite{cosec} or matrix
factorizations \cite{matrix}. Operationally, the obstructions for $\xi$
replace the Euler class for the equations $\mu=0$, up to a
sign. Indeed, since the $\aroof$-genus
$$
\aroof(x)=x^{1/2} - x^{-1/2}
$$
changes sign under $x\mapsto x^{-1}$ and since $c_1(\fgb_\cP)=0$, we have
\begin{equation}
\aroof \left(\Hd (C, \fgb^\vee_\cP) \right) = (-1)^{\dim \fg} \, \aroof
\left(- \Hd (C, \fgb_\cP \otimes \cK_C ) \right) \,. \label{changearoof}
\end{equation}
Note we have already encountered the $p$-fields $\xi$ in the form of
the manifold \eqref{def_tX}. The new aspect is the twist by $\cK_C$
which they require in a global geometry.

\subsubsection{} 

The key to understanding \eqref{Zgn} is to study it for the very
simple geometry 
$$
(C,p_1)=(\bP^1,\infty)  \,, 
$$
in which, moreover, we consider the domain fixed and
\emph{do not} allow nodal degenerations. In other
words, we consider the open set 
$$
\QM_{\circ} \subset \QM(\bP^1)
$$
formed by quasimaps nonsingular at $p_1 = \infty$. While the map
$$
\ev_{\infty}: \QM_\circ \to \bX\rd\bG
$$
is not proper for quasimaps of fixed degree, the push-forward may
still be defined using equivariant localization with respect to
$$
q \in \Ct \subset \Aut(\bP^1, \infty) \,.
$$
The resulting object 
\begin{equation}
\Vertex = \ev_{\infty,*} \left(z^{\deg f} \tO_\vir\right)
\in K_{\bGa \times \Ct_q}(\bX\rd \bG)_{\textup{localized}} [[z]]
\,, \label{Vertex}
\end{equation}
is known under different names such as the disc partition function, or
the K-theoretic I-function. We call it the \emph{vertex} function to emphasize its
connection to both the vertices in DT theory, see e.g. \cite{Takagi} for an
introduction, and its connection with $q$-deformed vertex operators
and $q$-deformed conformal blocks, see \cite{AFO}. We avoid using the
term disc partition function because this would certainly cause
confusion with the maps considered in Section \ref{s_Dd} below.

\subsubsection{}

More generally, one may consider two evaluation maps
\begin{equation}
  \label{eq:18}
  \xymatrix{
    & \QM_\circ \ar[ld]_{\ev_0} \ar[rd]^{\ev_\infty} \\
    [\bX/\bG] && \bX\rd\bG
   } 
\end{equation}
and the operator $\VwD$ defined in \eqref{defbJ}. 
The vertex \eqref{Vertex} may be
interpreted as the vertex with a trivial descendent
\begin{equation}
1 \in K_{\bGt\times \Ct_q}(\bX)\,, \label{eq1}
\end{equation}
that is, as the result of acting by transposed operator on the vector
\eqref{eq1}.

As we will see, the elliptic function nature of elliptic stable
envelopes will bridge the difference between the two evaluation maps
in \eqref{eq:18}.


\subsection{Maps from $\Dd$ and $q$-Gamma functions}\label{s_Dd}

\subsubsection{}

Consider the formal disc 
$$
\Dd = \Spec \C[[t]] \,. 
$$
For any scheme $S$, one
defines 
\begin{equation}
\Maps(\Dd, S) = \lim_{\longleftarrow} \Jet_d(S), \qquad \Jet_d(S)=\Maps(\Spec  \C[[t]]/t^{d+1},
S)\,, \label{eq:26}
\end{equation}
which is a scheme, typically not of finite type over $\C$, see e.g.\ 
\cite{Mus} for an introduction.

Note that the induced action of $q\in \Aut(\Dd)$ on
$\cO_{\Maps(\Dd, S)}$ makes it a graded algebra with finitely many
generators below any given degree, thus an object suitable for the
usual K-theoretic manipulations.

\subsubsection{}

In the same fashion, one constructs moduli space of maps to a 
quotient stack $S=\big[\tS/\bG\big]$. It contains stable quasimaps 
\begin{equation}
  \label{eq:11}
  \QM(\Dd \to \tS\rdd \bG) \subset \Maps(\Dd, S)  \,,
\end{equation}
as an open substack of maps $f(t)$ that evaluate to a stable point at the
generic point $* \in \Dd$.

\subsubsection{}

Consider the evaluation at closed point 
$$
\ev_0 : \Maps(\Dd, S) \to S 
$$
and the sheaf 
\begin{equation}
  \label{eq:10}
\ev_{0,*} \cO_{\Maps(\Dd, S)} \in K_\textup{eq}(S)[[q]] \,. 
\end{equation}
Note that modulo $q$ one is computing with constant maps, therefore 
\begin{equation}
  \label{OvirY}
  \ev_{0,*} \cO_{\Maps(\Dd, S)}  =
  \cO_{S} + O(q)\,. 
\end{equation}

\subsubsection{}
The function
\begin{equation}
  \label{defphi}
  \phi(x) = \prod_{n\ge 0} (1-q^n x) \,. 
\end{equation}
solves the $q$-difference equation
\begin{equation}
\phi(qx) = \frac{1}{1-x} \, \phi(x)\label{phi_eq}
\end{equation}
and vanishes at the points $x=1,q^{-1},q^{-2}, ... $ which form a
multiplicative analog of the sequence $\{0,-1,-2,\dots\}$. Up to conventions, this is the reciprocal of the $q$-Gamma
function.

\subsubsection{}

We define $\phi$ for K-theory classes by the rule
$$
\phi( x\oplus y ) = \phi(x) \, \phi(y) \,.
$$
Note that $\phi(-x)$ is the character of a polynomial algebra with
generators of degree $x,qx,q^2x, \dots$. In particular,
\begin{equation}
\phi(-q/x) = \textup{character of polynomial functions on }\Maps(\C_q
\to \C_x)\,, \label{phimaps}
\end{equation}
where the source and the target have the indicated weights.

\subsubsection{}

Generalizing \eqref{phimaps}, we have the following 

\begin{Proposition}\label{p_1} 
  If $S$ is a smooth stack, we have 
  \begin{equation}
  \label{evOvir_phi}
 \ev_{0,*} \cO_{\Maps(\Dd, S)} = \phi(-q \, T^\vee \!S) \,. 
\end{equation}
\end{Proposition}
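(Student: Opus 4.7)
The strategy is to use the vector-bundle structure of jet schemes over a smooth base, compute the pushforward of the structure sheaf as a graded symmetric algebra, and rearrange the resulting infinite product into the $\phi$-expression on the right-hand side. Since both sides are $\Ct_q$-equivariantly compatible with \'etale base change, I would first reduce to the case where $S$ is a smooth quasiprojective variety; the general smooth stack case then follows by descent, given the formalism of jet schemes for stacks as in \cite{Mus}.

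Next, I would invoke the following local description: in local coordinates $x_1, \dots, x_n$ on $S$ near a point $s$, a $d$-jet based at $s$ has the form $f(t) = s + a_1 t + \cdots + a_d t^d$ with $a_k \in T_s S$, so the truncation $\Jet_d(S) = \Maps(\Spec \C[t]/(t^{d+1}), S)$ is Zariski-locally the total space of a vector bundle, $\Ct_q$-equivariantly identified as $\bigoplus_{k=1}^d q^{-k}\, TS$ for the natural $\Aut(\Dd) = \Ct_q$ action $t \mapsto qt$ on $\Dd$. Indeed this action makes the coordinate function $a_k$ a weight-$q^k$ eigenfunction, so the $k$-th fiber direction sits in weight $q^{-k}$. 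Since $\pi_d: \Jet_d(S) \to S$ is then the projection from the total space of a vector bundle, higher direct images vanish and
$$ \pi_{d,*}\, \cO_{\Jet_d(S)} \;=\; \Symm^{\bullet}\!\Bigl( \bigoplus_{k=1}^d q^{k}\, T^{\vee} S \Bigr) $$
as a graded $\cO_S$-algebra. The transition maps $\Jet_{d+1}\to\Jet_d$ are surjective on structure sheaves, and the summand of weight $q^k$ contributes only at order $\geq q^k$, so passage to the inverse limit yields a well-defined element of $K(S)[[q]]$:
$$ \ev_{0,*}\, \cO_{\Maps(\Dd, S)} \;=\; \Symm^{\bullet}\!\Bigl( \bigoplus_{k\geq 1} q^{k}\, T^{\vee} S \Bigr). $$

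To conclude, writing $y_i$ for the Chern roots of $T^\vee S$ and using the K-theoretic identity $[\Symm^\bullet V] = \prod_j (1-v_j)^{-1}$ on Chern roots $v_j$ of $V$,
$$ \Symm^{\bullet}\!\Bigl( \bigoplus_{k \geq 1} q^{k}\, T^{\vee} S \Bigr) = \prod_i \prod_{k \geq 1} \frac{1}{1 - q^k y_i} = \prod_i \frac{1}{\phi(q y_i)} = \frac{1}{\phi(q T^\vee S)} = \phi(-q T^\vee S), $$
the last equality by extending the multiplicativity $\phi(u+v) = \phi(u)\phi(v)$ to virtual K-theory classes as $\phi(-v) = \phi(v)^{-1}$. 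The main technical point I expect to confront is the inverse-limit step, which is controlled by the strict positivity of the $q$-grading (only finitely many summands contribute at each order of $q$); the other steps are essentially bookkeeping around the $q$-weight conventions.
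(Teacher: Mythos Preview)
Your proof is correct and follows essentially the same approach as the paper: the paper's two-sentence proof simply observes that for a smooth stack the positive-$q$-degree part of $\Maps(\Dd,S)$ is built from quotients of vector spaces by vector spaces, which reduces to the one-dimensional formula \eqref{phimaps}. You have unpacked this sketch in detail---the vector-bundle structure of $\Jet_d$, the $\Symm^\bullet$ computation, and the inverse-limit step controlled by the $q$-grading---and your descent remark plays the role of the paper's ``quotients of vector spaces by vector spaces'' for the stacky case.
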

\begin{proof}
  If $S$ is a smooth stack, computations with positive powers of $q$ 
involve quotients of vector spaces by vector spaces. By construction,
these
reduce to \eqref{phimaps}. 
\end{proof}

\subsubsection{}\label{s_mero} 
Note that the result in \eqref{evOvir_phi} belongs to the intermediate
algebra
$$
K_\textup{eq}(S) \subset  K_\textup{eq}(S)_\mero \subset  K_\textup{eq}(S)[[q]]
$$
formed by meromorphic functions on
$$
\Spec K_\textup{eq}(S) \times \{|q|<1\}
$$
with poles along the following divisors. Let $x\in \tS$ be a point and
let $\bG_x$ denote its stabilizer subgroup. The inclusion $(\bG_x,x)
\to (\bG, \tS)$ induces a map
$$
\bG_x / \textup{conjugation} \to \Spec K_\textup{eq}(S) \,. 
$$
The pullback of the polar divisor of \eqref{evOvir_phi} is
contained in the union of the divisors $w=q^k$, where $k=1,2, \dots$
and $w$ is a weight of $T_x \tS$.

\subsubsection{} \label{s_theta} 
The function 
\begin{equation}
\vth(z) = \phi(qz) \phi(z^{-1})\label{def_vartheta}
\end{equation}
has a simple zero at $z\in q^\Z$ and satisfies
\begin{equation}
\vth(q^k z) = (-1)^k q^{-k(k+1)/2} z^{-k} \vth(z) \,.\label{theta_trans_1}
\end{equation}
We fix \eqref{def_vartheta} as the pullback of the theta line bundle
on $E$ under
the canonical map \eqref{CtoE} below. We note that different normalizations of theta functions are
convenient in different contexts. For instance, in \cite{ese} the the
more symmetric choice $(z^{1/2}-z^{-1/2})\phi(qz) \phi(q/z)$ is
used.

\subsubsection{} \label{s_theta2}
{}From \eqref{theta_trans_1}, one get the following transformation
formula for sections of $\Theta(\cV)$, where $\cV \in K_\bG(\bX)$.
Let $\bG_x\subset \bG$ denote the stabilizer of a point $x \in \bX$
and let $\bT\subset \bG_x$ denote a maximal torus. Let
$\vartheta_\cV(t)$ denote a section of the pullback of $\Theta(\cV)$
under the composed map
$$
\bT \xrightarrow{\quad \chern_{K\to E} \quad}
\Ell_{\bT}(\pt)\xrightarrow{
  \quad (\bT,\pt) \to (\bG,x) \quad} \Ell_\bG(\bX) \,, 
$$
where the first map is the reduction modulo $q$, see \eqref{chern}
below.

Recall from e.g.\ Section B.3 in \cite{part1} that the degree of a line
bundle on $\Ell_{\bT}(\pt)$ is described by an integral quadratic
form on $\cochar(\bT)$, which in this case is
\begin{equation}
(\xi,\eta)_\cV = \tr_{\cV_x} \xi \eta \,, \quad \xi, \eta \in \Lie \bG_x
\,.\label{norm_xi}
\end{equation}
This quadratic form is the same data as a symmetric linear map 
$$
 \cochar(\bT)  \owns \sigma \mapsto \sigma^\vee_\cV \in \chr(\bT) \,. 
$$
With these notation, the formula \eqref{theta_trans_1} generalizes to
\begin{equation}
\frac{\vth_\cV(\sigma(q) \, t)}{ \vth_\cV(t)   } = q^{-\|\sigma\|^2/2} \, (-q^{-1/2})^{\langle
  \sigma, \det \cV \rangle} \sigma^\vee_\cV(t)^{-1}   \,.\label{theta_trans_2}
\end{equation}

\subsubsection{}

Also note that the formula \ref{def_vartheta} relates $\vartheta$ to
the monodromy of a 
scalar $q$-difference equation closely related to \eqref{phi_eq}.
Indeed
$$
\vth(z) = f_0 / f_\infty \,, \quad f_0 = \phi(qz)\,, \quad f_\infty =
\phi(z^{-1})^{-1} 
$$
where $f_0$ and $f_\infty$ solve the equations
$$
f_0(qz) = \frac{1}{1-qz} f_0(z) \,, \quad f_\infty(qz) =
\frac{1}{1-(qz)^{-1}} f_\infty(z)
$$
and are holomorphic in the neighborhood of $0$ and $\infty$,
respectively.

\subsubsection{}

In $\Ct_q$-equivariant localization formulas, we may replace the
domain of the quasimap by the formal neighborhoods of the
$\Ct_q$-fixed points. Therefore, $q$-Gamma functions appear
in equivariant localization formulas for
exactly the reasons discussed in this section. Many authors have
observed that it is, therefore, natural to add $q$-Gamma functions
terms into the definition of vertex functions and related objects. For
instance, the full Nekrasov instanton partition function
\cite{Nekr_Instantons} includes a
double gamma function prefactor, called the perturbative
contribution. A very general framework for incorporating such factors
has been proposed in cohomology by H.~Iritani in \cite{Iri}.

Observe that the $q$-Gamma function is a much more natural K-theoretic
object than its cohomological counterpart which leads to certain transcendental constants instead of
$q$-series with integral coefficients.

\subsection{Monodromy and stable envelopes}

\subsubsection{}
Vertex functions pack information about quasimaps of all possible
degrees, which a priori is an infinite amount of
information. Remarkably, their complexity as special functions is
bounded  by certain linear $q$-difference equations that they
satisfy in both the K\"ahler variables $z$ and in equivariant
variables in $\bGa$. Further, the $q$-difference equations in
variables associated with symplectic (or more generally, self-dual)
actions are regular. This includes by our hypothesis the variables
$z$.

See, in particular, \cite{OS} for a
detailed discussion of the difference equations for quasimaps to
Nakajima quiver varieties. They involve very nontrivial constructions
in geometric representation theory.

\subsubsection{}

There are two basic questions that one can ask about a function that
solves some linear $q$-difference equation: to compute the equation
itself and/or to
compute its monodromy. Here we focus on the latter.

Recall that solutions of a regular
$q$-difference equations in $z$ are meromorphic functions on $\bZb$ and the monodromy
of the equation is the ratio of the fundamental solutions normalized
at two different starting points $0_\cL$ and $0_{\cL'}$. See \cite{Gaz,EFK} for an introductory
discussion of linear $q$-difference equations.

\subsubsection{} 

Clearly, the monodromy depends on the variables $z$ in a
$q$-periodic way, that is, it factors through the map
\begin{equation}
z \in \bZ \to \cE_\bZ=\Pic_\bG(\bY)_\tp \otimes E\,, \quad E = \Ct /
q^\Z \,. \label{defcEZ}
\end{equation}
The target of this map parametrizes the dynamical variables for
elliptic stable envelopes for $[\bY/\bG]$, and this will play an
important role in our computation of the monodromy.

\subsubsection{} 

The connection between
elliptic stable envelopes and monodromy of vertex functions has been
already noted and explored in \cite{ese}, where, in particular, the monodromy in
the equivariant variables was computed in terms of the elliptic stable
envelopes for
$$
\bA \subset \bGas \subset \bGa\,. 
$$
Here $\bGas$ denotes the subgroup of symplectic automorphisms.
This result of \cite{ese} uniquely constraints the monodromy in the K\"ahler
variable $z$ for a wide class of $\bX$, see \cite{BezOk} for an example of
this principle in action. 

Our main result
here is a direct computation of the monodromy which uses nothing but
the stable envelopes for $\bG$.

\subsubsection{}

Vertex functions belong to the world of equivariant K-theory, while
their monodromy will be expressed in terms of elliptic stable
envelopes, and thus objects from the world of equivariant elliptic cohomology.
To relate the two, we will use the canonical map
\begin{equation}
\chern_{K\to E}: \Spec K_\bG(\bX)\otimes \C \to
\Ell_\bG(\bX)\label{chern}
\end{equation}
that exists for every $\bG$ and $\bX$ and comes from the quotient map
\begin{equation}
\chern_{K\to E}: \Spec K_{U(1)}(\pt)\otimes\C  = \Ct \to \Ct/q^\Z =
\Ell_{U(1)}(\pt) \,, \label{CtoE}
\end{equation}
and the corresponding map between the formal groups of the cohomology
theories. Its role and properties are exactly parallel to the
cohomological Chern character maps like
$$
\chern_{H\to K}: \Spec \Hd_{U(1)}(\pt)\otimes\C = \C
\to \C/2\pi i \Z = \Spec K_{U(1)}(\pt)\otimes\C \,. 
$$

\subsubsection{}

Recall the canonical Poincar\'e-type line
bundle $\cU$ over $\Ell_\bG(\bX) \times \cE_\bZ$, where $\cE_\bZ$ is as in
\eqref{defcEZ}. Concretely, if $\{\cL_i\}$ is a basis of
$\Pic_\bG(\bX)_\tp$ and $\{z_i\}$ are the dual coordinates on
$\cE_\bZ$ then
\begin{equation}
\cU= \Theta(\textstyle{\sum} (\cL_i -1)(z_i-1))\,, \label{cUTh}
\end{equation}
see, in particular, Appendix A.4.1 in \cite{part1}. By the convention of Section \ref{s_theta}, sections of $\cU$ pull
back to functions of the form
\begin{equation}
\bu(s,z) = \frac{\vth(s z)} {\vth(s) \vth(z)} \label{budef}
\end{equation}
under the Chern character map \eqref{chern}. The function
\eqref{budef} satisfies 
\begin{equation}
\bu(q s,z) = z^{-1} \bu(s,z)\,, \quad \bu(s,qz) = s^{-1} \bu(s,z)
\,.\label{bueq}
\end{equation}

\subsubsection{}

Geometrically, operators of the general form \eqref{defbJ} intertwine certain interesting $q$-connections 
associated to the point $0 \in \bP^1$ with a simple abelian
$q$-connection associated to the nonsingular point $\infty \in
\bP^1$, see Chapter 8 in \cite{pcmi}.

The abelian connection contains two parts. One does not involve the
K\"ahler variables $z$ and can be traced to the edge terms in
localization formulas as in Section 8.2 of \cite{pcmi}. A flat section of
this connection may be given in terms of the $q$-Gamma functions
discussed in Section \ref{s_Dd}. We define
\begin{equation}
  \label{def_bGp}
  \bGamma'= \phi\left(-q( T \bX - \fg + \fgb)^\vee\right) 
\end{equation}
where $\fgb$ denotes $\fg$ with the $\bGa$-module structure required
in \eqref{mu}. One recognizes in \eqref{def_bGp} the tangent bundle to
$\tX/\bG$. We also consider a slightly different function
\begin{equation}
  \label{def_bG}
  \bGamma= \phi\left(-q\, T^\vee \bX + q\, \fg^\vee - \fgb^\vee\right)
  \,. 
\end{equation}
The difference between the two is that \eqref{def_bG} contains the
$p$-field term $\fgb^\vee$ without the factor of $q$. This means that
in \eqref{def_bG}, the $p$-fields are not constrained to vanish at $t=0$. 



\subsubsection{}
The other part of the abelian connection describes the $z$-dependence
and is a slight modification of the equation 
\begin{equation}
\bu(q^{\cL} z) = \cL^{-1} \otimes \bu(z) \label{eqbu}
\end{equation}
satisfied by the function \eqref{cUTh}. We recall that line bundles
$\cL$ give cocharacters of the torus $\bZ$ of the K\"ahler variables. The shift
by $q^{\cL}$ is the shift by the value of the corresponding
cocharacter at $q\in \Ct$.

To match with curve counting formulas, we will need to replace
$\cL^{-1}$ by $\cL$ in \eqref{eqbu}, and we will also need to account
for the term that comes from the degree of the
polarization. These changes will appear in the formula \eqref{cSs}
below.

It is a very general phenomenon, analogous to the shift by
$\varrho=\frac12 \sum_{\alpha >0} \alpha$ in Lie theory, that
curve-counting formulas experience a shift of the K\"ahler variables
$z$ by
a power of $(-\hbar^{1/2})$, see \cite{pcmi}. For instance, the
substitution in \eqref{cSs} may be written as
$$
z_\textup{old}^{-1} = z_\#\,, 
$$
where $z_\#$ is the variable used in Theorem 8.3.18 in \cite{pcmi}.
In cohomology, this shift  becomes
the shift by the canonical theta-characteristic, see \cite{MO1}.

\subsubsection{}

Now we are ready to state the main result of this section that
relates the operator \eqref{defbJ} with the elliptic stable envelopes.
Recall the sheaf $\cS_\symr $ from \eqref{cSG3} and denote 
\begin{equation}
\cS_\# = \cS_\symr \Big|_{\textstyle{z_\textup{old} = 
    z^{-1} (-\hbar^{1/2})^{\det T^{1/2}}}} \,.\label{cSs}
\end{equation}
The shift in \eqref{cSs}
is by the value of the cocharacter corresponding to $\det T^{1/2} \in
\Pic(X)$ at the point $(-\hbar^{1/2})\in \Ct$. Here the choice of the square root
$\hbar^{1/2}$ is fixed in the definition of the  symmetrized virtual structure
sheaf $\tO_\vir$. 
Similarly, we denote by $\Stab_\#$ the stable envelope \eqref{Stabmmu},
with the K\"ahler variables $z$ changed as above. 

In Section \ref{s_0L}, we introduced the toric variety $\bZb$ that
compactifies the K\"ahler torus $\bZ$. Let $0_\Lamp \in \bZ$ be the
torus-fixed point corresponding to an ample line bundle $\Lamp$. Let
$\bX_\sst \subset \bX$ be the corresponding stable locus. 

\begin{Theorem}\label{t3} 
  If $\gamma$ is a section of $\cS_\#$ over $\Ell_{\bGa}(\bX_\sst)$ then
  \begin{equation}
  \VwD\left(\chern_{K\to E} ^*(\gamma) \otimes \bGamma\right) =
   \hbar^{-\frac14 \dim \bX\rd\bG}
  \chern_{K\to E}^* 
  \left(\Stab_\#(\gamma)\right) \otimes \bGamma' 
\label{VwDStab}
\end{equation}%
  for $z$ in a neighborhood of $0_{\Lamp}$. 
\end{Theorem}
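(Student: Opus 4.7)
The plan is to prove Theorem \ref{t3} by reducing it to the uniqueness part of Theorem \ref{t2}: I will show that the left-hand side of \eqref{VwDStab}, once divided by $\bGamma'$ and the overall constant, satisfies the properties that characterize the stable envelope, and then conclude by injectivity of $\Stab^{-1}_\symr$. The argument has three steps.

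\emph{Step 1: reinterpret $\VwD$ via $p$-fields and the formal disc.} I would first recast $\VwD$ in the matrix-factorization/$p$-field framework of Section \ref{s_quasimaps}, lifting the computation from $\QM(\bP^1\to \bY\rdd\bG)$ to the enhanced moduli $\QM_\xi(\bP^1\to [\tX/\bG])$ with $\tX=\bX\times\fg$. Equivariant localization with respect to $\Ct_q\subset \Aut(\bP^1,\{0,\infty\})$ splits the push-forward into contributions from $0$ and $\infty$. At $\infty$ the map is non-singular and gives $\ev_\infty^*(\cdot)$, with an edge factor whose $z$-dependence is absorbed into the substitution of \eqref{cSs}. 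At $0$ the formal neighborhood is the disc $\Dd$, and Proposition \ref{p_1} together with the sign identity \eqref{changearoof} identifies the push-forward of $\tO_\vir$ through this disc with the $q$-Gamma factor $\bGamma'$ when the $p$-field $\xi$ is constrained to vanish at $t=0$, and with $\bGamma$ otherwise. The difference between the definitions \eqref{def_bGp} and \eqref{def_bG}---presence or absence of $q$ in front of $\fgb^\vee$---is precisely this boundary condition.

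\emph{Step 2: verify the defining properties of the stable envelope.} Set
\[
U(\gamma) \;=\; \hbar^{\frac14 \dim(\bX\rd\bG)}\,\VwD\!\left(\chern_{K\to E}^*(\gamma)\otimes\bGamma\right)\otimes(\bGamma')^{-1}.
\]
Viewed through $\chern_{K\to E}$ as a section over $\Ell_\bGt(\bX)$, I claim $U(\gamma)$ meets the three conditions characterizing $\chern_{K\to E}^*(\Stab_\#(\gamma))$ under Theorem \ref{t2}: (i) it is supported on $\bY$, because in the cosection/matrix-factorization picture the contribution is concentrated on the critical locus of $W$, which by Lemma \ref{l_crit} is $\bY_\sst\times\{0\}$, and the descending-manifold construction of Section \ref{s_spp} propagates this support to all of $\bX$; (ii) it lifts to a section of $\cS_\#$, which is an automorphy check combining \eqref{phi_eq} for the $\phi$-factors of $\bGamma,\bGamma'$ with \eqref{theta_trans_2} for $\Theta(T^{1/2}\bX)$ and \eqref{bueq} for $\cU(\cL_i,z_i)$---the polarization shift $z_{\text{old}}=z^{-1}(-\hbar^{1/2})^{\det T^{1/2}}$ and the overall $\hbar^{-\dim(\bX\rd\bG)/4}$ arise exactly to compensate residual cocycle discrepancies; (iii) on $\bX_\sst$, base-point freeness makes the quasimap factor through $\bY\rdd\bG$ at every point of $\bP^1$, so that $\ev_0$ and $\ev_\infty$ are joined by the rotation symmetry of $\bP^1$, with the non-constant-map corrections collapsing into exactly the ratio $\bGamma'/\bGamma$ that has been divided out, giving $U(\gamma)|_{\bX_\sst}=\chern_{K\to E}^*(\gamma)$.

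\emph{Step 3: conclude by uniqueness.} Theorem \ref{t2} asserts that $\Stab^{-1}_\symr$ is injective on sections of $\cS_\#$. Both $U(\gamma)$ and $\chern_{K\to E}^*(\Stab_\#(\gamma))$ are sections of $\cS_\#$ supported on $\bY$, and by Step 2 they have the same restriction to $\bX_\sst$, hence they coincide, which is precisely \eqref{VwDStab}.

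\textbf{Main obstacle.} The hardest part is Step 2: matching the transformation laws and residues of $U(\gamma)$ exactly with those of $\cS_\#$, and confirming that its restriction to $\bX_\sst$ is $\chern_{K\to E}^*(\gamma)$ without stray factors. This demands careful bookkeeping of the polarization-dependent shift $(-\hbar^{1/2})^{\det T^{1/2}}$, the edge contributions at $\infty\in\bP^1$, the overall $\hbar$-power, and the subtle cancellations between the normal bundle $N_{\bX/\bY}$, the dual $p$-field $\fgb^\vee$, and the $\phi$-factors in $\bGamma/\bGamma'$. The $T^*\Gr(k,n)$ computation of Appendix \ref{s_A} serves as the template confirming that all signs, half-integer shifts, and $\hbar$-powers balance correctly, and I would follow its pattern in the general verification.
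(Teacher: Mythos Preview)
Your strategy---show that $U(\gamma)$ satisfies the defining properties of the stable envelope and invoke the injectivity in Theorem~\ref{t2}---is a genuinely different route from the paper's, which does \emph{not} appeal to uniqueness. Instead, the paper computes both sides directly by $\Ct_q$-localization and matches them term by term. The pivot is equation~\eqref{eq:8}: since $\Stab_\#(\gamma)$ is already known (by the \emph{existence} part of Theorem~\ref{t2}) to be a section of $\cS_\#$, its pullback $\tgam$ has the automorphy~\eqref{eq:14}, and this is precisely what converts $\ev_\infty^*(\chern^*\gamma)$ into $c_\sigma^{-1}z^{-\deg\sigma}\ev_0^*(\tgam)$ on each fixed component of~\eqref{xinf}. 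The factor $z^{-\deg\sigma}$ cancels the $z^{\deg f}$ in~\eqref{defbJ}, and what remains assembles into $\hbar^{-\frac14\dim}\cdot\ev_{0,*}\cO_2$ (Proposition~\ref{p_vv1}). A second, separate argument (Proposition~\ref{p_vv2}) then replaces $\cO_2$ by $\cO_3$, whence $\bGamma'$ via Proposition~\ref{p_1}.

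Your Step~2(ii) has a real gap. You propose to check that $U(\gamma)$ is $\chern_{K\to E}^*$ of a section of $\cS_\#$ by an ``automorphy check'' using \eqref{phi_eq}, \eqref{theta_trans_2}, \eqref{bueq}. But $\VwD$ is not given by an explicit theta-function expression; it is a push-forward, a power series in $z$. You cannot read off its transformation under a shift $x\mapsto\sigma(q)x$ of the $\bG$-variables by quoting cocycle identities. To analyze that shift you must first localize, identify the fixed loci~\eqref{xinf}, and track how the shift permutes the components and rescales each term---at which point you are running the paper's computation, and it is both shorter and cleaner to prove the equality outright than to certify that $U(\gamma)$ descends to $\Ell_\bGt(\bX)$.

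More seriously, your sketch omits the entire content of Proposition~\ref{p_vv2}. Proposition~\ref{p_1} gives $\bGamma'$ as $\ev_{0,*}$ from the moduli $\bM_3=\Maps_\xi(\Dd\to[\bX/\bG])$ of \emph{all} disc maps, whereas $\Ct_q$-localization of $\VwD$ naturally lands on the \emph{stable} disc maps $\bM_2\subsetneq\bM_3$. Passing from $\bM_2$ to $\bM_3$---equivalently, showing the unstable strata contribute nothing---requires the estimate~\eqref{eq:28}: one uses the Kempf--Ness stratification~\eqref{tXiX}, the twist $\tau_{i,d}$ by powers of $\sigma_i(t)$, the fact that $\langle\sigma_i,\Lamp\rangle<0$ so that $z^{-\deg\sigma_i}\to 0$ as $z\to 0_\Lamp$, and the cancellation between $\Theta(T^{1/2})$ and the $q$-Gamma factors in $\cO_3$ that makes the remaining growth merely exponential in $d$. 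This is where the hypothesis ``for $z$ in a neighborhood of $0_\Lamp$'' enters essentially, and also where the wheel-type vanishing of Section~\ref{s_wheel} (using the descending-manifold support of $s''$) replaces cosection localization. None of this is visible in your Steps~1--3.
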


\subsubsection{}\label{s_c_Monodr} 
The left-hand side of \eqref{VwDStab} involves the summation over all
degrees weighted by  $z^{\deg}$. This summation converges for $z$ in a
neighborhood of $0_{\Lamp}$. Clearly, the right-hand side of the formula 
\eqref{VwDStab} gives a meromorphic continuation of its left-hand
side.  We thus conclude the following: 

\begin{Corollary}\label{c_Monodr} 
Let $\cL'$ be another choice of the stability parameter. The monodromy
of the $q$-difference equation satisfied by the function \eqref{VwDStab}
from the point $z=0_\cL$ to the point $z=0_{\cL'}$ is given by the
linear
operator
$$
\textup{Monodromy} = \left(\Stab_\#'\right)^{-1} \circ \Stab_\#\,.
$$
\end{Corollary}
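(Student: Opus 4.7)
The plan is to exhibit both fundamental solutions of the K\"ahler $q$-difference equation (at $z = 0_\cL$ and at $z = 0_{\cL'}$) as instances of formula \eqref{VwDStab}, and then read off the monodromy as the transition matrix between these two bases of solutions.

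First I would recall, following Section \ref{s_quasimaps} and the detailed treatment in \cite{OS}, that $\VwD$ furnishes a fundamental solution of a regular linear $q$-difference system in $z$ whose coefficient matrix depends only on $\bX$ and not on the stability parameter. With stability $\cL$, the series defining $\VwD$ converges in a neighborhood of $z = 0_\cL$, so it serves as the fundamental solution normalized at this torus-fixed point. As $\gamma$ ranges over a basis of sections of $\cS_\#$ (built from the stable locus for $\cL$), Theorem \ref{t3} rewrites this fundamental solution as the closed-form meromorphic expression $\hbar^{-\frac14 \dim \bX\rd\bG} \, \chern_{K\to E}^*(\Stab_\#(\gamma)) \otimes \bGamma'$, valid meromorphically on all of $\bZ$.

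Next I would apply the identical recipe with $\cL$ replaced by $\cL'$. This furnishes a basis $\{\gamma'_j\}$ of sections of $\cS_\#'$ for which $\VwD$, expanded near $0_{\cL'}$, equals $\hbar^{-\frac14 \dim \bX\rd\bG} \, \chern_{K\to E}^*(\Stab_\#'(\gamma'_j)) \otimes \bGamma'$. The scalar factor and the $q$-Gamma prefactor $\bGamma'$ are both intrinsic to $\bX$ and carry no reference to the stability parameter. Hence, after dividing out by $\bGamma'$ and the scalar, the two families of meromorphic sections produced by $\Stab_\#$ and $\Stab_\#'$ span the same solution space of the underlying $q$-difference equation, and by the definition of monodromy of a regular $q$-difference equation the transition matrix between these two bases is exactly $(\Stab_\#')^{-1} \circ \Stab_\#$.

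The principal obstacle is ensuring that the two vertex functions, computed with stabilities $\cL$ and $\cL'$, are genuine analytic continuations of one another across $\bZ$ rather than solutions of a priori different equations, an issue which at the level of quasimap moduli amounts to nontrivial wall-crossing. Theorem \ref{t3} sidesteps this subtlety: each vertex is repackaged as a single meromorphic expression on $\bZb$ through the elliptic stable envelope, so the comparison of these global expressions at the two torus-fixed points $0_\cL, 0_{\cL'} \in \bZb$ produces the formula with no further calculation.
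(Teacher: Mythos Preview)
Your proposal is correct and follows essentially the same approach as the paper: the paper's argument is the single observation (just before the corollary) that the right-hand side of \eqref{VwDStab} provides a meromorphic continuation of the left-hand side, so applying Theorem \ref{t3} for both $\cL$ and $\cL'$ yields two fundamental solutions whose ratio is the stated monodromy. You have simply spelled out the details the paper leaves implicit, including the stability-independence of $\bGamma'$ and the scalar prefactor, and correctly identified that the potential wall-crossing issue is bypassed because both $\Stab_\#$ and $\Stab_\#'$ land in the same space of global sections of $\cS_\#$ on $\bX$.
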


\subsubsection{}\label{s_rho} 

To make \eqref{VwDStab} a scalar-valued function of the equivariant variables in $\bGa$
and the K\"ahler variables in $\bZ$, one pairs it with a test element
$\rho \in K_{\bGt}(\bX)$ using the paring 
\begin{equation}
\langle \, \cdot \, , \, \cdot \, \rangle: \left(K_\bGt(\bX) \otimes K_\bGt(\bX)_\mero  \right)_{\supp} \xrightarrow{\,\,\,
  \quad }
 K_\bGt(\pt)_\mero \xrightarrow{\,\,\, \int_\bG \,\,\,}
 K_\bGa(\pt)_\mero\,,\label{pair_int}
 \end{equation}
 where
 \begin{enumerate}
 \item[(1)] the meromorphic completion $K_\bGt(\bX)_\mero$ is as
   Section \ref{s_mero};
\item[(2)] the subscript support means that we need to make sure the set
 $$
 \supp \rho \cap \supp \textup{\eqref{VwDStab}} \cap \tX^g 
 $$
 is proper for a generic $g\in \bGt$;  (Note that for e.g.\ Nakajima quiver varieties
 $\tX^g$ is by itself proper.)
 \item[(3)] the second push-forward in \eqref{pair_int} is the projection to $\bG$-invariants. It 
may be computed by
integration over a compact form of $\bG$, whence the notation. Note
that, by definition, elements of $K_\bGt(\pt)_\mero$ are regular on
any compacts form of $\bG$. 
\end{enumerate}

\subsubsection{}\label{s_integral}

The Weyl integration formula  expresses $\int_\bG$ as an integral in
which the measure is the Haar measure on a compact torus and the
integrand is a meromorphic function on the corresponding complex
torus. Moreover, the $q$-Gamma functions in \eqref{VwDStab} make this
integral a relative of the Mellin-Barnes integrals popular in the
classical theory of hypergeometric functions.

Such integrals may be computed by residues, which is a standard
practice in the 
physics literature going back to at least \cite{Higgs}, in a closely
related context. See, in particular, the discussion in
\cite{Hori_Beij, Hori_Tong} and also e.g.\ the Appendix in \cite{AFO}
for a discussion aimed at mathematicians. 

Our proof of Theorem \ref{t3} may, in principle, be recast in a form
that refers to contour deformation and residue computations. For
the convenience of those readers who find this language more
familiar, we provide an example of such
translation in Appendix \ref{s_A}.

\subsection{Proof of Theorem \ref{t3}}

\subsubsection{}

Consider the moduli spaces $\bM_i$, $i=1,\dots,3$, and the sheaves
$\tO_1,\cO_2,\cO_3$ described in the following table:

 \begin{center} 
  \begin{tabular}{ | m{6.7cm}| m{6cm} | } 
\hline
 stable quasimaps \vspace{-0.3cm} $$\bM_1 = \QM_{\xi,\circ}(\bP^1 \to
    [\bX/\bG]) \vspace{-0.3cm} $$ 
    with $p$-fields $\xi$, nonsingular at
                 $\infty \in \bP^1$& $\tO_1$ = symmetrized virtual
                                     structure sheaf
                                     $\tO_{\bM_1,\vir}$ of $\bM_1$, cosection
                                     localized to quasimaps to $\bY$\\ 
  \hline
  stable quasimaps \vspace{-0.3cm}$$\bM_2=\QM_{\xi}(\Dd \to
                   [\bX/\bG]) \vspace{-0.3cm}$$ with $p$-fields $\xi$
                                   & $\cO_2$ = virtual
                                     structure sheaf of $\bM_2$, cosection
                                     localized to quasimaps to $\bY$ \\
  \hline
 all maps \vspace{-0.3cm}$$\bM_3=\Maps_{\xi}(\Dd\to
                   [\bX/\bG]) \vspace{-0.3cm}$$ with $p$-fields $\xi$
                                   & $\cO_3=$ virtual
                                     structure sheaf of $\bM_3$\\
  \hline
\end{tabular}
\end{center}

\noindent 
By definition, a map from the formal disc $\Dd$ is stable if it
evaluates to a stable point at the generic point of $\Dd$. 

\subsubsection{}

By its definition \eqref{defbJ}, the operator $\VwD$ is given by the
formula
\begin{equation}
  \label{vvv1}
  \VwD = \ev_{0,*} \left( z^{\deg f} \tO_1 \otimes
  \ev_\infty^*(\, \cdot \,) \right) \,. 
\end{equation}
Our strategy is to compare this integration over $\bM_1$ with the integration over
$\bM_2$ and $\bM_3$. We begin with the following result that connects
$\bM_1$ with $\bM_2$. 

\begin{Proposition}\label{p_vv1} 
  For $\gamma$ as in Theorem \ref{t3}, we have
  \begin{equation}
    \label{vvv2}
    \VwD\left(\chern_{K\to E} ^*(\gamma) \otimes \bGamma\right) =
    \hbar^{-\frac14 \dim \bX\rd\bG}
  \chern_{K\to E}^* 
  \left(\Stab_\#(\gamma)\right)  \otimes \ev_{0,*} \cO_2 \,, 
  \end{equation}
  for $z$ in a neighborhood of $0_\Lamp$. 
\end{Proposition}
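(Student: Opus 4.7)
The plan is to decompose $\bM_1$ via the $\Ct_q$-equivariant restriction map $r\colon \bM_1 \to \bM_2$ that sends a quasimap $f\colon \bP^1 \to [\bX/\bG]$ (nonsingular at $\infty$) to its restriction to the formal neighborhood $\Dd_0$ of $0 \in \bP^1$. Since the pushforward $\ev_{0,*}$ in the definition of $\VwD$ is defined via $\Ct_q$-equivariant localization to the fixed points $0, \infty\in\bP^1$, and since $\Dd_0$ is precisely the $\Ct_q$-local model near $0$, the contribution from $\Dd_0$ factors out cleanly and yields the factor $\ev_{0,*}\cO_2$ appearing on the right-hand side of \eqref{vvv2}.

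The remaining task is to identify the complementary contribution, coming from the side of $\bP^1$ near $\infty$, with the stable envelope. By $\Ct_q$-equivariance and the nonsingularity condition at $\infty$, this contribution depends only on $\ev_\infty(f) \in \bX\rd\bG$ together with the $q$-equivariant gluing data across $\bP^1 \setminus \Dd_0$. Combined with the insertion $\ev_\infty^*(\chern_{K\to E}^*(\gamma)\otimes \bGamma)$, this produces a class on $\bX$. The key bookkeeping observation is that $\bGamma'=\ev_{0,*}\cO_3$ by Proposition \ref{p_1}, so the prefactor $\bGamma$ is designed to absorb the difference between the virtual contributions of $\bM_3$ and $\bM_2$, leaving a residual enumerative object genuinely supported on $\bY$.

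To identify this residual object with $\hbar^{-\frac14\dim \bX\rd\bG}\chern_{K\to E}^*(\Stab_\#(\gamma))$ I invoke the uniqueness characterization of Theorem \ref{t2}: up to torsion, a section of $\cS_\#$ on $\Ell_\bGt(\bX)$ is determined by its restriction to $\Ell_\bGt(\bX_\sst)$. Thus it suffices to verify for the geometric correction factor three properties: (i) support on $\Ell_\bGt(\bY)$, which follows from the cosection-localization of $\tO_1$ to $\mu^{-1}(0)$; (ii) the correct restriction to $\Ell_\bGt(\bX_\sst)$: when $\ev_0(f)$ lies in $\bX_\sst$ the quasimap is everywhere nonsingular, only the trivial component of the fiber of $r$ survives, and the resulting contribution collapses to the insertion $\chern_{K\to E}^*(\gamma)$; (iii) the automorphy of a section of $\cS_\#$ rather than $\cS_\symr$, coming from the $z^{\deg f}$ weighting and the shift in \eqref{cSs}.

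The main obstacle is point (iii), the bookkeeping of normalizations. One must track how the polarization $T^{1/2}$, the symmetrization of $\tO_\vir$ by a square root of the virtual canonical line bundle, and the $q$-shift formula \eqref{theta_trans_2} combine to produce exactly the line bundle $\cS_\#$ with the prescribed K\"ahler shift $z_\textup{old} = z^{-1}(-\hbar^{1/2})^{\det T^{1/2}}$, along with the scalar factor $\hbar^{-\frac14\dim \bX\rd\bG}$. Using the relation $T\bX = T^{1/2} + \hbar^{-1}(T^{1/2})^\vee$, this scalar is a direct consequence of passing between a square root of $\det T\bX$ and $\det T^{1/2}$ in the virtual canonical bundle of $\bX\rd\bG$, while the K\"ahler shift is forced by the comparison of the elliptic cocycle defining $\cU$ with the $q$-cocycle controlling the $z^{\deg f}$ weighting of the descendent vertex.
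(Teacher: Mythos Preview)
Your approach differs substantially from the paper's, and it has a genuine gap.

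The paper does \emph{not} invoke the uniqueness from Theorem~\ref{t2}. Instead, it performs a direct $\Ct_q$-localization computation on $\bM_1$ and matches term by term. The key step is the opposite of what you propose: rather than trying to show that the left-hand side has the automorphy of a section of $\cS_\#$, the paper uses the known elliptic automorphy of $\Stab_\#(\gamma)$ to convert the insertion at $\infty$ into one at $0$. Concretely, on the component of $\bM_1^{\Ct_q}$ indexed by a coweight $\sigma$, one has $\ev_\infty^*(\chern_{K\to E}^*(\gamma)) = c_\sigma^{-1} z^{-\deg(\sigma)}\,\ev_0^*(\tgam)$, where $\tgam = \chern_{K\to E}^*(\Stab_\#(\gamma))$ and the factor $c_\sigma$ comes from the transformation law \eqref{theta_trans_2} for $\Theta(T^{1/2})\otimes\cU$. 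This $c_\sigma z^{\deg(\sigma)}$ is then shown, by explicit determinant computations for $\Hd(\bP^1,V_1+V_2)$, to cancel exactly against the twist in $\tO_\vir$ relative to $\cO_\vir$, leaving the scalar $\hbar^{-\frac14\dim\bX\rd\bG}$. Finally, the identity $\cO_{\bM_1,\vir}\otimes\ev_\infty^*(\bGamma)=\cO_2$ is read off from the localization formula for $\cO_\vir$.

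The gap in your proposal is step (iii). To invoke Theorem~\ref{t2}, you must show that your ``geometric correction factor'' is the $\chern_{K\to E}$-pullback of a section of $\cS_\#$ on $\Ell_\bGt(\bX)$. But the left-hand side of \eqref{vvv2} is, a priori, just a localized K-theory class: a meromorphic function on $\Spec K_\bGt(\bX)$, defined by summing over degrees. Nothing in the enumerative definition of $\VwD$ tells you it is $q$-periodic in the $\bG$-variables with the cocycle of $\cS_\#$. Establishing that automorphy is not bookkeeping; it is the entire content of the proof, and carrying it out forces you into exactly the coweight-by-coweight localization computation the paper performs. Once you have done that computation, the uniqueness argument is superfluous.

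A smaller point: you mischaracterize the role of $\bGamma$. It is not designed to absorb the difference between $\bM_3$ and $\bM_2$; that is the business of Proposition~\ref{p_vv2}. Here $\bGamma$ is inserted at $\infty$ and its job is to cancel the ``term at $\infty$'' in the localization formula \eqref{eq:24} for $\cO_{\bM_1,\vir}$, so that only the ``term at $0$'' survives and one gets $\cO_2$. Also, the relation between $\bM_1^{\Ct_q}$ and $\bM_2^{\Ct_q}$ used in the paper is not your restriction map $r$ but rather the closed embedding \eqref{eq:21} going the other direction, identifying $\bM_1^{\Ct_q}$ with the locus $\xi=0$ inside $\bM_2^{\Ct_q}$.
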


\noindent
The proof will occupy Sections \ref{s_pvv1_b}---\ref{s_pvv1_c}.

\subsubsection{Proof of Proposition \ref{p_vv1}} \label{s_pvv1_b}

Recall that, by construction, the pushforward in \eqref{vvv1} is
computed in $\Ct_q$-equivariant K-theory. Let us consider the corresponding
fix loci and their contributions. Let
$$
(\cP, f, \xi) \in \bM_1^{\Ct_q}
$$
be a $q$-fixed quasimap. This means that there is an action of $\Ct_q$ on $\cP$ such that 
$$
q \tf = \tf q \,, \quad \tf = (f,\xi) \,, 
$$
for the induced action of $\Ct_q$ on $\cP \times_\bG \bX$ and
$\fgb_\cP \otimes \cK_C$. Note that the action of $q\in \Aut(\bP^1)$ 
lifts to a \emph{unique} automorphism of the bundle $\cP$ because
$\tf(t)$ is stable for $t\ne 0,\infty$ and hence has trivial stabilizer in
$\bG$. 

\subsubsection{}
We cover $\bP^1$ by two charts $\A^1$ and $\bP^1\setminus \{0\}$ and
trivialize $\cP$ in both charts. In each chart, $\Ct_q$ acts by a
coweight of $\bG$.

Consider the chart at $\infty$ and the corresponding coweight
$$
\sigma_\infty : \Ct_q \to \bG \,.
$$
The point 
$$
\sigma_\infty(q) f(\infty) = f(q \infty) = f(\infty) \in \bX^{\sigma_\infty}
$$
is stable by the definition of $\bM_1$, and hence cannot be fixed by a
nontrivial $1$-parameter subgroup. We conclude the  following:
\begin{enumerate}
\item[(1)] the coweight $\sigma_\infty$ is trivial; 
\item[(2)] the map $f(t)=x_\infty$ takes a constant stable value in the chart near infinity; 
\item[(3)] $\xi=0$. 
\end{enumerate}
The vanishing of $\xi$ follows from the absense of $q$-invariant
sections of $\cK_{\bP^1\setminus 0}$.

\subsubsection{}
Now consider the chart of at zero. Since $\sigma_\infty =1$, the
corresponding coweight is the same as the clutching function for
the bundle $\cP$. We denote it by simply $\sigma$. We observe that the
point $x_\infty$ is such that
$$
x_0 = \lim_{q\to 0} \sigma(q) \cdot x_\infty
$$
exists, is fixed by $\sigma$,  and therefore is unstable unless $\sigma$ is trivial. If $\sigma$ is
trivial then the quasimap is constant and $x_0=x_\infty$.

We conclude that
\begin{equation}
  \label{xinf}
  \bM_1^{\Ct_q} = \bigsqcup_{\sigma/ \textup{conjugacy}} \left\{(x_0,x_\infty) \big| \, x_0 \in
  \bX^\sigma, x_\infty\in \Attr_\sigma(x_0) \cap \bX_\sst \right\}
\end{equation}
and the evaluation map
\begin{equation}
  \label{evev}
    \bM_1^{\Ct_q} \xrightarrow{\quad (\ev_0,\ev_\infty)\quad} \bX
    \times \bX 
\end{equation}
is the map to $(x_0,x_\infty)$.


\subsubsection{}\label{s_tgam} 
We denote
\begin{equation}
\tgam= \chern_{K\to E}^* 
  \left(\Stab_\#(\gamma)\right) \,, \label{def_tgam}
  \end{equation}
and consider the pullback of this class by the map
\begin{equation}
\sigma\times \iota: (\Ct_q,  \bX^\sigma ) \to (\bG,  \bX )
\,. \label{sigmaid}
\end{equation}
The elliptic transformation property of the class $\tgam$
allows us to compare its pullback via different coweights $\sigma$
in \eqref{sigmaid}. From definitions, see in particular Section
\ref{s_theta}, we compute
\begin{equation}
  \label{eq:14}
  (\sigma \times \iota )^* \, \tgam = c_\sigma \, z^{\deg(\sigma)} (1 \times \iota
  )^* \,  \tgam \,, 
\end{equation}
where $\deg(\sigma)$ is the image of $\sigma$ under the map
\begin{equation}
\deg(\sigma): \textup{coweights}(\bG) \to \Hom\left(\Pic_\bG(\bX),\Z\right) \to
\textup{characters}(\bZ) \,.  \label{degsigma}
\end{equation}
The other factor in \eqref{eq:14} is given by 
\begin{equation}
  \label{csigma}
  c_\sigma^{-1} =  q^{\beta_2}\left(\hbar^{1/2}
    q^{1/2} \right)^{\beta_1}  \, \sigma^\vee_{T^{1/2}} 
    \end{equation}
with
\begin{equation}
  \label{eq:17}
   \beta_1= \langle \sigma, \det T^{1/2} 
              \rangle  \,,\quad 
              \beta_2 = \tfrac12 (\sigma,\sigma)_{T^{1/2}}   
             \,. 
\end{equation}
In other words, if $f^*(T^{1/2})$ splits as $\bigoplus
\cO_{\bP^1}(m_i)$, which is equivalent to $\sum q^{m_1}$ being the character of
$\ev_0^*(T^{1/2})$, then 
$$
\beta_1 = \sum m_i\,, \quad \beta_2 = \frac12 \sum m_i^2 \,.
$$
The $c_\sigma \, z^{\deg(\sigma)}$ factor in \eqref{eq:14} is the
automorphy factor for sections of the line bunde $\cS_\#$ in
\eqref{cSs}. It combines the contributions from $\Theta(T^{1/2})$ and
$\cU$. In particular, the monomial in $z$ comes from the first
equation in \eqref{bueq}.

We remark that this is the place in the proof where
the exact form of the $z$-shift in \eqref{cSs} is used. 

\subsubsection{}\label{s_ev_ev}

Note that in \eqref{evev}, $\Ct_q$ acts via $\sigma$ on the first factor and
trivially on the second factor. Further, $\gamma$ is the
restriction of $\Stab_\#(\gamma)$ to the stable locus, and the
restriction of any class to $\Attr(\bX^\sigma)$ is the same as the
pullback of its restriction from $\bX^\sigma$. Therefore, 
from \eqref{xinf} and \eqref{eq:14} we conclude that 
\begin{equation}
  \label{eq:8}
  \ev_\infty^*(\chern_{K\to E}^*(\gamma)) =
  c_\sigma^{-1} z^{-\deg(\sigma)}  \, \ev_0^*(\tgam) \, .
\end{equation}

\subsubsection{}

We now turn to the localization of $\tO_1$, which
by definition is given by
\begin{equation}
  \label{tOvir}
\tO_\vir = \cO_\vir \otimes \left( \det \cK_\vir \,\,  \frac{\ev_\infty^*(\det
    T^{1/2})}{\ev_0^*(\det T^{1/2})}
  \right)^{1/2} \,, 
  \end{equation}
  where
  \begin{equation}
    \label{Kvir}
    \cK_\vir = \det( T_\vir \bM_1)^{-1}
  \end{equation}
  is the virtual canonical bundle. We note that 
  \begin{equation}
    \label{eq:13}
\left. \frac{\ev_\infty^*(\det
    T^{1/2})}{\ev_0^*(\det T^{1/2})} \right|_{\cM_1^{\Ct_q}} =
q^{-\beta_1}
\,, 
  \end{equation}
  by the argument of Section \ref{s_ev_ev}.

  \subsubsection{}
  We now consider $T_\vir \bM_1$, which we split in two parts 
  $$
  T_\vir \bM_1 = \Hd\left(\bP^1, V_1 + V_2 \right)
  $$
  where
  \begin{equation}
  V_1 = f^* T \bX\,, \quad V_2 = - \fg_\cP + \fgb_\cP \otimes \cK_{\bP^1}\,.
\label{V1V2}
\end{equation}
  If $\cL\cong \cO(m)$ is a line bundle on $\bP^1$ then
  $$
  \det \Hd (\bP^1, \cL)  = q^{m(m+1)/2}
  \left(\cL\big|_\infty\right)^{m+1} \,. 
  $$
Since $V_1\cong \sum \cL_i + \hbar^{-1}
\cL_i^\vee$, we conclude 
\begin{equation}
  \label{eq:15}
  \det \Hd(V_1) = q^{2 \beta_2} \, \hbar^{\beta_1+ \frac12 \dim \bX}
 \ev_\infty^*(\sigma^\vee_{T^{1/2}})^2  \,. 
\end{equation}
Similarly, $V_2\cong \sum (\hbar 
\cL_i^\vee \otimes \cK_{\bP^1} - \cL_i)$ for some $\cL_i$. Moreover,
the bundle $\sum \cL_i$ is self-dual and, in particular, of zero
degree. It follows that 
\begin{equation}
  \label{eq:19}
  \det \Hd(\bP^1,V_2) = \hbar^{-\dim \fg} \,. 
\end{equation}

\subsubsection{}
Putting \eqref{eq:8}, \eqref{eq:13}, \eqref{eq:15}, and \eqref{eq:19}
together, we conclude
\begin{equation}
  \label{eq:20}
  z^{\deg(\sigma)} \, \ev_\infty^*(\chern_{K\to E}^*(\gamma)) \, \tO_1 =
  \hbar^{-\frac14 \dim \bX\rd \bG} \ev_0^*(\tgam) \cO_{\bM_1,\vir} \,. 
\end{equation}

\subsubsection{}
It remains to consider the contribution of $\cO_\vir$. Let $\cL$ be a
line bundle on $\bP^1$. Consider the super vector space
$\Hd(\bP^1,\cL)$, which has the character 
$$
\Hd(\bP^1,\cL) = \frac{\cL\big|_0 -
  q^{-1}\cL\big|_\infty}{1-q^{-1}} \,, 
$$
by localization. This implies the following formula for the character
of the virtual
structure sheaf of this super space
$$
\cO_{\Hd(\bP^1,\cL), \vir} =
{\phi\left(q \cL^\vee\big|_\infty- \cL^\vee\big|_0\right)} \,. 
$$
To get the character of $\cO_{\bM_1,\vir}$, we apply this formula to
$\Hd(\bP^1,V_1+V_2)$, as in \eqref{V1V2}.  We get
\begin{align}
  \cO_{\bM_1,\vir}  &= \phi\big(
    \ev_\infty^* (\textup{term at $\infty$}) - \ev_0^* (\textup{term
                      at $0$}) \big) \,, \notag \\
  \textup{term
  at $0$} &= T^\vee \bX - \fg^\vee + q \fgb \,,    \label{eq:24}
 \\
  \textup{term
  at $\infty$} &= q T^\vee \bX - q \fg^\vee + \fgb^\vee \,, \notag 
\end{align}
where the terms with trivial $q$-weight are interpreted as deformation
along the fixed manifold. 

  \subsubsection{} \label{s_pvv1_c}

  {}From \eqref{eq:24}, we observe that
  \begin{equation}
    \label{eq:25}
     \cO_{\bM_1,\vir}  \otimes \ev_\infty^*(\bGamma) = \phi\big(
   - \ev_0^* (\textup{term
                      at $0$}) \big) = \cO_2  \,. 
  \end{equation}
  Observe that the map 
  \begin{align}
    \bM_1^{\Ct_q} &\to  \bM_2^{\Ct_q} \,, \notag \\
    (x_0,x_\infty) & \mapsto f(t) = \sigma(t) \cdot x_\infty  \label{eq:21}
  \end{align}
  is a closed embedding and its image it the locus where the $p$-field
  $\xi$ vanish.
  Note that on the formal
  disk $\Dd$ it
  is possible to have nonvanishing $q$-invariant sections of $\fgb_\cP
  \otimes \cK_{\Dd}$ for a nontrivial coweight $\sigma$. However,
  since $\cO_2$ is cosection localized to maps to $\bY$, the support
  of its localization is the image of \eqref{eq:21}. We conclude that
  \begin{equation}
    \label{eq:22}
    \ev_{0,*} \left( z^{\deg} \, \tO_1 \otimes
      \ev_\infty^*(\chern_{K\to E}^*(\gamma)\otimes \bGamma) \right) =
     \hbar^{-\frac14 \dim \bX\rd\bG} \tgam \otimes \ev_{0,*} \cO_2 \,, 
   \end{equation}
   and this completes the proof of Proposition \ref{p_vv1}. \hfill $\square$

  \subsubsection{}

The second half of the proof of Theorem \ref{t3} is summarized in the
following

\begin{Proposition}\label{p_vv2} 
  For $\gamma$ as in Theorem \ref{t3}, we have
  \begin{equation}
    \label{vvv3}
  \chern_{K\to E}^* 
  \left(\Stab_\#(\gamma)\right)  \otimes \ev_{0,*} \cO_2  =
  \chern_{K\to E}^* 
  \left(\Stab_\#(\gamma)\right)  \otimes \ev_{0,*} \cO_3 \,, 
  \end{equation}
  for $z$ in a neighborhood of $0_\Lamp$. 
\end{Proposition}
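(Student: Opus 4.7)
The plan is to verify \eqref{vvv3} by first identifying the right-hand side explicitly via Proposition \ref{p_1}, and then showing that the difference $\ev_{0,*}\cO_3 - \ev_{0,*}\cO_2$, multiplied by $\tgam$, vanishes on $[\bX/\bG]$ for $z$ near $0_\Lamp$.

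For the first step, I would note that $\bM_3 = \Maps_\xi(\Dd \to [\bX/\bG])$ coincides with $\Maps(\Dd \to [\tX/\bG])$, the mapping space into the smooth quotient stack of the total space of the $p$-field bundle $\tX = \bX \times \fg$, whose tangent complex is $T[\tX/\bG] = T\bX - \fg + \fgb$. Applying Proposition \ref{p_1} directly gives
\begin{equation*}
\ev_{0,*}\cO_3 = \phi\bigl(-q\,T^\vee[\tX/\bG]\bigr) = \phi\bigl(-q(T\bX - \fg + \fgb)^\vee\bigr) = \bGamma',
\end{equation*}
which reduces \eqref{vvv3} to the assertion $\tgam \otimes \ev_{0,*}\cO_2 = \tgam \otimes \bGamma'$.

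For the second step, the complement $\bM_3 \setminus \bM_2$ parameterizes those maps whose $\bX$-component sends the generic point of $\Dd$ into the unstable locus. I plan to decompose this complement using the Kempf-Ness-Hesselink stratification \eqref{XiX}: on the piece where the generic image lies in the stratum indexed by $(\sigma_i, \sP_i, F_i)$, the principal bundle $\cP$ acquires a canonical reduction of structure group to $\sP_i$, and the map factors through $\Attr_{\sigma_i}(F_i)$. This realizes each piece of $\bM_3 \setminus \bM_2$ as a twisted affine bundle over the corresponding moduli for the Levi $\bL_i$, mimicking \eqref{XXG} in families over $\Dd$. The argument from there parallels the induction step in the proofs of Theorems \ref{t1} and \ref{t2}: pulling back $\tgam = \chern_{K\to E}^*\Stab_\#(\gamma)$ to such a stratum, and combining its automorphy under $\sigma_i$ with the $z^{\deg(\sigma_i)}$ factor arising from the quasimap degree (computed as in \eqref{eq:14}--\eqref{csigma}), the resulting section of $\cS_\#$ transforms nontrivially under $\sigma_i$ because $\langle \sigma_i, \Lamp \rangle \neq 0$ by the destabilizing property of the stratum. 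Lemma \ref{l1}, applied to the push-forward along the $\sigma_i$-direction, then forces the contribution to be supported on a resonance of the form \eqref{sumz} in the K\"ahler variables, which is disjoint from a sufficiently small neighborhood of $z = 0_\Lamp \in \bZb$. Summing over strata yields the desired vanishing.

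The main obstacle I anticipate is tracking the automorphy factors carefully enough to guarantee that the combined $z$-monomial coming from $\Stab_\#$ and from the quasimap degree genuinely pairs nontrivially with each destabilizing coweight $\sigma_i$: this is precisely the reason for the specific shift \eqref{cSs} and mirrors the delicate bookkeeping carried out for the generic stratum in Section \ref{s_pvv1_b}. A secondary obstacle is justifying that the stratum-by-stratum decomposition can be exchanged with the $\ev_{0,*}$ push-forward and that the resulting $z$-series converges in a common neighborhood of $0_\Lamp$; this should follow from the fact that for $z$ near $0_\Lamp$ the weights $z^{\deg(\sigma_i)}$ accompanying destabilizing strata lie in the interior of the dual cone, so that the relevant geometric series for the toric boundary are controlled uniformly.
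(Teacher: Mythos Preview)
Your overall shape is right---stratify the unstable contributions by the Kempf--Ness--Hesselink strata, use the elliptic automorphy of $\tgam$ to extract a power of $z$ paired with the destabilizing $\sigma_i$, and conclude that this contribution vanishes near $0_\Lamp$---and your first step identifying $\ev_{0,*}\cO_3=\bGamma'$ via Proposition~\ref{p_1} is correct. But there are two genuine gaps.

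\textbf{The cosection localization is not addressed.} You write that $\cO_3-\cO_2$ is supported on $\bM_3\setminus\bM_2$, but this is false: the table defining $\cO_2$ specifies that it is cosection-localized to quasimaps to $\bY=\mu^{-1}(0)$, whereas $\cO_3$ is not. So even on $\bM_2$ the two sheaves differ, supported on stable maps that do not land in $\bY$. The paper handles this in Section~\ref{s_wheel} by a separate argument: on the $q$-fixed locus the function $W(t)=\langle\xi(t),\mu(x(t))\rangle$ is forced to vanish (there are no $q$-invariant sections of $\cK_\Dd$), and the class $\tgam$ is the pullback of the section $s''$ from Section~\ref{s_spp}, which by construction is supported on the descending manifold of $\Re W$ inside $\tX_\sst$. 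Since $\{W=0\}\cap\Desc=\bY_\sst$, the class $\ev_0^*\tgam$ already vanishes where the cosection localization would matter. This step has no analogue in your proposal.

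\textbf{Lemma~\ref{l1} is the wrong mechanism.} Lemma~\ref{l1} is a statement about pushforwards of line bundles along abelian-variety fibers in \emph{elliptic cohomology}; it gives vanishing of $p_*\cS$ away from a resonance divisor in the elliptic variable $z\in E$. Here you are working with $\chern_{K\to E}^*$-pullbacks, i.e.\ honest functions on $\Ct$-valued variables, and $0_\Lamp$ is a boundary point of the toric compactification $\bZb$, not a point on $E$ to be separated from a divisor. The paper's argument is genuinely analytic: for each stratum it introduces the twisting map $\tau_{i,d}$ by $\sigma_i(t)^{\lfloor\ell(d)/2\rfloor}$ on $d$-jets, compares the Euler characteristics \eqref{eq:31} and \eqref{eq:32}, and proves the estimate $O((\const\cdot z)^{\const\cdot d})$ in \eqref{eq:28}. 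The crucial point (Section~\ref{s_pvv2_c}) is that the potentially superexponential factors coming from $\Theta(T^{1/2})$ and from the $q$-Gamma functions in the localization of $\cO_3$ combine into $\phi(qx)/\phi(\hbar x)$, which satisfies a \emph{regular} $q$-difference equation and hence grows at most exponentially under the shift; only the $z^{-\deg(\sigma_i)}$ factor from the automorphy of $\tgam$ survives with linear exponent in $d$. Your proposal gestures at convergence in the final paragraph but does not supply this balancing, and the invocation of Lemma~\ref{l1} would not produce it.
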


\noindent
The proof will occupy Sections \ref{s_pvv2_b}---\ref{s_pvv2_c}.

\subsubsection{Proof of Proposition \ref{p_vv2}} \label{s_pvv2_b}

Recall that $\cO_3$ is the virtual structure sheaf of the larger of
the two spaces 
$$
\bM_3 = \Maps_\xi(\Dd \to [\bX/\bG]) \supset \bM_2. 
$$
The difference between $\cO_3$ and $\cO_2$ is twofold:
\begin{enumerate}
\item[(1)] $\cO_3$ is not cosection localized to maps to $\bY$,
\item[(2)] $\cO_3$ is not restricted to maps that are stable at the generic
  point $*\in \Dd$. 
\end{enumerate}
We address the first point first.

\subsubsection{}\label{s_wheel} 

Consider the target of the map \eqref{eq:21}
\begin{equation}
  \label{xinf2}
  \bM_2^{\Ct_q} = \bigsqcup_{\sigma/ \textup{conjugacy}} \left\{(\txx_0,\txx_\infty) \big| \, \txx_0 \in
    \tX^\sigma, \txx_\infty\in \Attr_\sigma(\txx_0) \cap \tX_\sst
  \right\}\,, 
\end{equation}
where $\txx = (x,\xi)$. Note that, while $\xi$ may be nonvanishing on
$\bM_2^{\Ct_q}$, the function
\begin{equation}
  W(t)=\langle \xi(t), \mu(x(t)) \rangle = H^0(\Dd,\cK_\Dd)
\end{equation}
has to vanish, as there are no $q$-invariant sections of
$\cK_\Dd$.

As pointed out in Section \ref{s_rmks}
$$
\ev_0^*(\tgam) = \chern_{K\to E}(s'') \big|_{\Attr(\bX^\sigma)} \,, 
$$
where $s''$ is the elliptic class on $\tX$ constructed in Section \ref{s_spp} in the
course of the proof of Theorem \ref{t2}.  By construction,
$$
s'' \Big|_{(\bX_\sst \cap W^{-1}(0)) \setminus \bY} = 0 \,, 
$$
because $s''\big|_{\bX_\sst}$
is supported on the descending manifold of $\bY$ with respect to
the gradient flow of $\Re W$. This means that $\ev_0^*(\tgam)$ is
already supported on quasimaps to $\bY$ and no cosection localization
of $\cO_2$ is required. 

\subsubsection{}

By definition \eqref{eq:26}, there are canonical maps
\begin{equation}
  \label{eq:23}
  \jet_d: \bM_3 \to \Jet_{d}\,, \quad \Jet_d
  =\Jet_{\xi,d}([\bX/\bG]) \,. 
\end{equation}
We denote by $\Jet_{d,\ust}\subset \Jet_d$ jets with values in the
unstable locus and define 
$$
\Jet_{d,\sst} = \Jet_d \setminus \Jet_{d,\ust} \,.
$$
This is the stable locus for the action of $\bG$ on jets of maps
to $\bX$. Clearly,
\begin{equation}
\bM_3 \setminus \bM_2  = \lim_{\longleftarrow}  \Jet_{d,\ust} \,
.\label{eq:27}
\end{equation}

\subsubsection{}\label{s_norm_rho}
To compare the pushforwards from $\bM_3$ and $\bM_2$, we consider the
sheaf $\jet_{d,*} \cO_3$ and consider its decomposition
$$
\jet_{d,*} \cO_3 = \jet_{d,*} \cO_3 \big|_{\Jet_{d,\sst}} +
\cR_d
$$
that
corresponds
to the partition of $\Jet_d$ into the stable and unstable locus in
suitably
completed $K_\bGt(\Jet_d)$ as in \cite{DHLloc}.
The remainder sheaf $\cR_d$ may be further broken down in pieces
$$
\cR_d = \sum \cR_{i,d}
$$
that
correspond to various strata of the unstable locus as in \eqref{XiX}. 

We claim that the remainder $\cR_d$ satisfies a bound of the
form 
\begin{equation}
  \label{eq:28}
\chern_{K\to E}^* 
\left(\Stab_\#(\gamma)\right)  \otimes \ev_{0,*} \cR_d  =
O( (\const_1 z)^{\const_2 d}) \,, \quad z \to 0_\Lamp\,, 
\end{equation}
for certain positive numbers, of which the 
first one depends on other variables in the theory. The bound should be
understood via pairing with test elements $\rho \in K_{\bGt}(\bX)$ as
in Section \ref{s_rho}.
Any norm of this pairing will be bounded by $O( (\const_1 z)^{\const_2
  d})$
times a suitable norm of $\rho$.

It is clear that a bound of the form \eqref{eq:28} implies Proposition
\ref{p_vv2}, so it remains to prove this bound.

\subsubsection{}

Consider the filtration of the form \eqref{XiX}
\begin{equation}
\tX_\ust=\tX_1 \supset \tX_2 \supset \tX_3 \dots\label{tXiX}
\end{equation}
Denote by
$\Jet_{i,d}$ those jets that take values in the closure of $\tX_i$, but not in any
further pieces of the filtration \eqref{tXiX}. Let $\bM_{3,i,d}$ denote the
preimage of $\Jet_{i,d}$ under the map \eqref{eq:23}.

\subsubsection{}\label{s_conj_g} 

Let $\bj(t)\in \Jet_{i,d}$ be a jet of a map. By definition, this is
the vertical arrow in the following diagram:
\begin{equation}
  \label{eq:30}
  \xymatrix{
    & \Spec \C[[t]]/t^{d+1} \ar[d]^{\bj} \ar[ld]\\
   \bG \times_{\sP_i}\Attr_{\sigma_i}\!\big(\tX^{\sigma_i}\big) \ar[r]&
   \textup{closure}\big(\tX_i\big)
   } \,. 
\end{equation}
The horizontal arrow in the diagram \eqref{eq:30} is the action map as
in  Section \ref{s_Xi_induct}. We recall, see e.g.\ Theorem 5.6 in
\cite{VinPop}, that the horizontal map in \eqref{eq:30} is a resolution of
singularities.

Since $\bj(t)$ meets $\tX_i$, it has a unique lift to the diagonal map in
\eqref{eq:30} for $d \gg 0$. This means there is an element
$$
\bg(t) \in \Maps(\Spec \C[[t]]/t^{d+1}, \bG) \,, 
$$
unique up to the left action of maps to $\sP_i$, such that
$$
\bg(t)^{-1} \bj(t) \in \Maps\big(\Spec \C[[t]]/t^{d+1},
\Attr_{\sigma_i}\!\big(\tX^{\sigma_i}\big)\big)\,.
$$

\subsubsection{}

By construction, the moduli space $\bM_3$ involves a
quotient by the group $\Maps(\Dd,\bG)$. By the argument of Section
\ref{s_conj_g},  we may represent elements of $\bM_{3,i,d}$ by
maps $\tf(t)$ such that their $d$-jets 
\begin{enumerate}
\item[(1)] take values in $\Attr_{\sigma_i}\!\big(\tX^{\sigma_i}\big)$, and 
\item[(2)] meet $\tX_i$. 
\end{enumerate}
This means that the map $\sigma_i(t)^k \tf(t)$ is regular for $k <
\ell(d)$, where $\ell(d)$ is linear function of $d$ with positive
slope. Therefore, the map
$$
\tau_{i,d}: \bM_{3,i,d} \owns \tf(t) \mapsto \sigma_i(t)^{\lfloor \ell(d)/2 \rfloor} \tf(t) \in
\bM'_{3,i,d} \subset \bM_3 
$$
is an isomorphism with a certain subset $\bM'_{3,i,d} \subset \bM_3$
that may be described in terms of tangency of its jets with
$\tX^{\sigma_i}$. The order of this tangency grows linearly with $d$
and for $d\to\infty$ we get maps to $\tX^{\sigma_i}$.

\subsubsection{} 

We now compare
\begin{equation}
\chi(\bM_{3,i,d}, \ev_0^*(\tgam \otimes \rho) \otimes \cR_{i,d})  =
\chi(\bM'_{3,i,d}, (\tau_{i,d}^{-1})^* \left(\ev_0^*(\tgam \otimes
  \rho) \otimes \cR_{i,d}\right))  \label{eq:31}
\end{equation}
with
\begin{equation}
\chi(\bM'_{3,i,d}, \ev_0^*(\tgam \otimes
  \rho) \otimes \cR'_{i,d}))\label{eq:32}
\end{equation}
where $\cR'_{i,d}$ is the analog of $\cR_{i,d}$ for $\bM'_{3,i,d}$ and
$\rho \in K_{\bGt}(\tX)$ is a fixed compactly supported test insertion as in Section
\ref{s_norm_rho}.

Concretely, \eqref{eq:32} may be computed e.g.\
using equivariant localization for the $\Ct_q$-action and it is given
by the same sum as the computation of $\chi(\bM_3, \ev_0^*(\tgam \otimes
  \rho) \otimes \cO_3)$, except that it is restricted to fixed points
  that lie in $\bM'_{3,i,d}$. The difference between \eqref{eq:31} and
  \eqref{eq:32} in localization formulas is
  that the equivariant variables in $\bG$ are shifted by
  ${\sigma_i(q)^{\lfloor \ell(d)/2 \rfloor}}$, as we have already
  seen in Section \ref{s_tgam}.

  \subsubsection{}

  We claim that: 
\begin{enumerate}
\item[(1)] \eqref{eq:32} is finite in the $d\to \infty$ limit,
  while 
  \item[(2)] \eqref{eq:31} is satisfies
\begin{equation}
\textup{\eqref{eq:31}}  = O\left(
z^{-\lfloor\ell(d)/2\rfloor  \deg(\sigma_i)} e^{\const d} \, \big|
\textup{\eqref{eq:32}} \big| \right) \label{eq:33} \,. 
\end{equation}
\end{enumerate}
The claim about finiteness of \eqref{eq:32} is
seen directly, in the same way as Proposition \ref{p_1}.

By the construction of the stratification \eqref{tXiX}, the
cocharacter $\sigma_i$ pairs negatively with any ample line bundle
$\Lamp$ in a given stability chamber. This means that
$$
- \deg(\sigma)_i  \subset \textup{effective cone for $\Lamp$}\,, 
$$
and therefore
$$
z^{-\deg(\sigma_i)} \to 0 \,, \quad z \to 0_\Lamp \,.
$$
This means that the estimate \eqref{eq:33} proves the
estimate \eqref{eq:28}. It thus remains to establish the estimate \eqref{eq:33}. 

\subsubsection{}\label{s_pvv2_c}

The $z$-dependent factor in \eqref{eq:33} comes from the elliptic
transformation for $(\tau_{i,d}^{-1})^* \left(\ev_0^*(\tgam)\right)$
as in \eqref{eq:14}. It thus remains to show that the rest of the
terms grow at most exponentially $d$, with some fixed exponent.

The test insertion $\rho$ depends polynomially on the equivariant
variables, and therefore grows at most exponentially under $q$-shifts.

Terms that could potentially have a superpolynomial growths come from
the line bundle $\Theta(T^{1/2})$ and the $q$-Gamma functions that
appear in the localization of $\cO_3$. However, they precisely balance
out, as the following computation shows. Let $x$ be a Chern root of
$T^{1/2}$ and of $T\bX$. Since $T\bX$ is self-dual up to the action
of $\bGa$, let $(\hbar x)^{-1}$ denote the dual Chern root of
$T\bX$. The function 
$$
\frac{\vth(x)}{\phi(1/x) \phi(\hbar x)} = \frac{\phi( qx)}{\phi(\hbar x)}
$$
satisfies a \emph{regular} $q$-difference equation in the variable
$x$, meaning that its $q$-shits grow at most exponentially.

This concludes the proof of Proposition \ref{p_vv2} and Theorem
\ref{t3}.

\appendix

\section{Appendix} 

\subsection{Mellin-Barnes integrals and vertex functions}\label{s_A} 

\subsubsection{} 

The goal of this Appendix is to illustrate our proof of Theorem
\ref{t3} with a translation of the argument in the language of the
contour deformation and residue computations, as promised in
Section \ref{s_integral}.

This will be done in the most basic example of the theory, for the
simplest possible Nakajima quiver variety, namely 
$$
\bY= \textup{cotangent bundle of
  the Grassmann variety $\Gr(k,n)$} \,.
$$
This comes from
\begin{align*}
  \bX &= \Hom(W, V) \oplus \hbar^{-1} \otimes \Hom(V, W) \,, \\
  \bG &= GL(V)\,, \quad \bGa = GL(W) \times \Ct_\hbar \,, 
\end{align*}
where $W \cong \C^n$ and $V \cong \C^k$. This data is conveniently
visualized using the quiver data
\begin{equation}
  \label{eq134}
  \begin{tikzcd}
    W   \arrow[rr,bend left, "A"] && V \arrow[ll,bend left, "B"] \arrow[
  out=-30,
  in=30,
  loop,
  distance=1.5cm, "\xi" ']
  \end{tikzcd} \,, 
\end{equation} 
in which we have also included the $p$-field
$$
\xi \in \fgb = \hbar \otimes \End(V)
$$
for future reference. We have 
$$
\mu = A B \in \hbar^{-1} \otimes \End(V) = \fgb^\vee \,
$$
with the paring
$$
\langle  \mu, \xi \rangle = \tr AB \xi \,.
$$
\subsubsection{}

There are two possible choice of the stability parameter
$$
\cL_{\pm} = \cO_\bX \otimes {\det}^{\pm 1} \,, 
$$
where $\det$ is the determinant character for $\bG$.  The corresponding
stable loci are:
\begin{align*}
  \bX_{\sst,+} & = \textup{ map $A$ is surjective} \,, \\
  \bX_{\sst,-} & = \textup{ map $B$ is injective} \,. 
\end{align*}
For the space $\tX$, the corresponding conditions read
$$
\tX_{\sst,+} = \left\{ \sum_i \Image(\xi^i A) = V\right\}\,,
\quad
\tX_{\sst,-} = \left\{ \bigcap_i \Ker(B \xi^i) = 0\right\} \,. 
$$

\subsubsection{}

A principal $\bG$-bundle $\cP$ over $\bP^1$ is the same data as a
vector bundle $\cV$. The quiver data is then promoted to bundle maps
\begin{equation}
  \label{eq135}
  \begin{tikzcd}
    \cW =W \otimes \cO_{\bP^1}   \arrow[rr,bend left, "A"] && \cV \arrow[ll,bend left, "B"] \arrow[
  out=-30,
  in=30,
  loop,
  distance=1.5cm, "\xi" ']
  \end{tikzcd} \,, 
\end{equation} 
For an $\cL_\pm$-stable quasimap, the bundle $\cV$ (respectively, $\cV^\vee$) is
globally generated, thus
$$
\cV  \cong \sum \cO_{\bP^1}(d_i) \,, 
$$
where $\pm d_i \ge 0$ for $\cL_{\pm}$.

\subsubsection{}

We denote the tori in $\bG$ and $\bGa$ by
$$
\bT_\bG= \diag (x_1,\dots,x_k) \subset \bG \,, \quad \bA = \diag(a_1,\dots,
a_n) \subset GL(W) 
\,. 
$$
Then
$$
T\bX - \fg + \fgb = \sum \frac{x_i}{a_j} + \sum \frac {a_j}{\hbar x_i} -
\sum \frac{x_i}{x_j} + \hbar \sum \frac{x_i}{x_j}  \,, 
$$
and thus
\begin{equation}
\bGamma' = \prod_{i=1}^k \prod_{j=1}^n\frac{1}{\phi(q a_j/x_i) \, \phi(q \hbar x_i/a_j)}
\prod_{i,j \le k} \frac{\phi(q x_j/x_i)}{ \phi(q x_j/\hbar x_i)}
\,. \label{eq:34}
\end{equation}
Note that the diagonal $i=j$ terms in the second factor contribute
a factor $\left(\frac{\phi(q)}{\phi(q/\hbar)}\right)^k$, which is does not depend on
$x$.



Adding the contribution of $\cO_{[\bX/\bG]}$ to $\bGamma'$, we
introduce the function 
\begin{equation}
  \label{eq:36}
   \bPhi = \prod_{i=1}^k \prod_{j=1}^n\frac{1}{\phi(a_j/x_i) \, \phi(\hbar x_i/a_j)}
\prod_{i, j \le k} \frac{\phib(x_j/x_i)}{ \phi(q x_j/\hbar x_i)}
\,,  
\end{equation}
where
\begin{equation}
  \label{eq:48}
  \phib(x) =
  \begin{cases}
    \phi(x)\,, & x \ne 1 \\
    \phi(q) \,, & x =1 \,. 
  \end{cases}
\end{equation}
One may avoid introducing $\phib$ by taking out the constant
$\left(\frac{\phi(q)}{\phi(q/\hbar)}\right)^k$
and restricting the product to $i \ne j$ in the second part of
\eqref{eq:36}. 

\subsubsection{}

We may take the following polarization
$$
T^{1/2} \bX = \Hom(W,V) = \sum \frac{x_i}{a_j} \,.
$$
Which means that the quadratic form \eqref{norm_xi} takes the
following form
\begin{equation}
\| (\xi,\alpha) \|^2 = \sum (\xi_i - \alpha_j)^2 \,, \quad
(\xi,\alpha) \in \Lie(\bT_\bG) \oplus \Lie(\bA) \,.\label{formT12}
\end{equation}

\subsubsection{}

In what follows, we assume that 
\begin{equation}
  \label{eq:41}
  |q|<|\hbar| < |a_i| < 1, \quad \forall i \in \{1, \dots, n\} \,, 
\end{equation}
which corresponds to the convergence of 
$$
\cO_{\Maps_{\xi}(\Dd\to
  \bX)} =
\prod_{i=1}^k \prod_{j=1}^n\frac{1}{\phi(a_j/x_i) \, \phi(\hbar x_i/a_j)}
\prod_{i\ne j \le k} \frac{1}{ \phi(q x_j/\hbar x_i)}
$$
for $|x_i|=1$.  Then by the Weyl integration formula, we have
\begin{equation}
  \label{eq:37}
  \langle \rho, \tgam \otimes \bGamma' \rangle = \frac{1}{k!} \int_{|x_i|=1}
  \rho \, \tgam \, \bPhi \prod \frac{dx_i}{2\pi i x_i} \,. 
\end{equation}
Here $\rho$ and $\tgam$ are, in principle, arbitrary K-theory
classes. In our context
\begin{equation}
  \label{eq:38}
  \tgam = \textup{elliptic stable envelope for $T^*\Gr(k,n)$ }\,, 
\end{equation}
pulled back via the map
$$
\chern_{K\to E} (x_i) = x_i \bmod q^\Z \,.
$$
Theorem \ref{t3} specializes to the following: 

\begin{Proposition}\label{p_MB} 
The integral \eqref{eq:37} is the vertex with descendents, in which the
class $\gamma \otimes \bGamma$ is inserted at $\infty\in \bP^1$, and
the class $\rho$ inserted at $0\in \bP^1$. 
\end{Proposition}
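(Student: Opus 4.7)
The plan is to derive Proposition~\ref{p_MB} directly from Theorem~\ref{t3} by unpacking the pairing \eqref{pair_int} via the Weyl integration formula. First, I would apply Theorem~\ref{t3} to the class $\chern_{K\to E}^*(\gamma)\otimes\bGamma$, which yields
\begin{equation*}
  \VwD\!\left(\chern_{K\to E}^*(\gamma)\otimes\bGamma\right)=\hbar^{-k(n-k)/2}\,\tgam\otimes\bGamma',
\end{equation*}
since $\dim\bX\rd\bG=\dim T^*\Gr(k,n)=2k(n-k)$ and $\tgam=\chern_{K\to E}^*(\Stab_\#(\gamma))$ by \eqref{def_tgam}. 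Pairing both sides against a test class $\rho\in K_{\bGt}(\bX)$ as in Section~\ref{s_rho}, the left-hand side is by the very definition \eqref{defbJ} of $\VwD$ the vertex with descendent $\gamma\otimes\bGamma$ inserted at $\infty$ and $\rho$ inserted at $0$. It remains to identify the right-hand-side pairing $\langle\rho,\tgam\otimes\bGamma'\rangle$, up to the scalar prefactor, with the integral \eqref{eq:37}.

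Second, I would compute this pairing explicitly. The push-forward $K_\bGt(\bX)\to K_\bGt(\pt)$ for the linear representation $\bX$ multiplies the integrand by the character of $\cO_\bX$, namely $\prod_{i,j}((1-a_j/x_i)(1-\hbar x_i/a_j))^{-1}$. The projection $\int_\bG$ to $\bG$-invariants then reduces, via Weyl integration for $\bG=GL(V)$, to a contour integral over the compact torus $|x_i|=1$, divided by $|W|=k!$, weighted by the Weyl denominator $\prod_{i\ne j}(1-x_j/x_i)$ and the Haar measure $\prod dx_i/(2\pi i x_i)$. The convergence hypothesis \eqref{eq:41} is exactly what is needed for the contour $|x_i|=1$ to separate the poles of $\cO_\bX$ from those of its dual, so that this integral indeed represents the $\bG$-invariant pairing.

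Third, I would collapse the integrand into $\bPhi$ by repeated application of the identity $\phi(x)=(1-x)\phi(qx)$ from \eqref{phi_eq}. Combining the $(1-a_j/x_i)^{-1}$ and $(1-\hbar x_i/a_j)^{-1}$ factors of $\cO_\bX$ with the $\phi(qa_j/x_i)^{-1}$ and $\phi(q\hbar x_i/a_j)^{-1}$ factors of $\bGamma'$ from \eqref{eq:34} yields exactly $\phi(a_j/x_i)^{-1}\phi(\hbar x_i/a_j)^{-1}$, matching the first product in $\bPhi$. For the gauge directions, the Weyl denominator $\prod_{i\ne j}(1-x_j/x_i)$ combines with the factor $\prod_{i,j\le k}\phi(qx_j/x_i)$ of $\bGamma'$ to give $\phi(q)^k\prod_{i\ne j}\phi(x_j/x_i)$; the $\phi(q)^k$ comes from the diagonal $i=j$ terms of $\fg$, which are absent from the Weyl denominator. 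This is precisely the role of the $\phib$ convention \eqref{eq:48}, so that $\prod_{i,j\le k}\phib(x_j/x_i)$ absorbs both the off-diagonal and the diagonal contributions, reproducing the second product of $\bPhi$.

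The main obstacle I anticipate is purely the bookkeeping of scalar prefactors: tracking the interplay between $\hbar^{-k(n-k)/2}$ from Theorem~\ref{t3}, the $\hbar$-powers and signs arising from duality substitutions such as $(1-x_i/a_j)=-(x_i/a_j)(1-a_j/x_i)$, and the normalization conventions for $\tO_\vir$, for the shift \eqref{cSs}, and for the Weyl integration formula. Once these constants are matched, Proposition~\ref{p_MB} follows without further geometric input. The residue/contour-deformation translation advertised in Section~\ref{s_integral} is then obtained by evaluating \eqref{eq:37} via the poles of $\bPhi$ enclosed by the polytorus $|x_i|=1$, which correspond bijectively to the $\binom{n}{k}$ fixed points of $\bA$ on $T^*\Gr(k,n)$.
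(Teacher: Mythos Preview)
Your core argument (paragraphs one through three) is correct and is precisely the paper's own deduction: equation \eqref{eq:37} is set up as the Weyl integral computing $\langle\rho,\tgam\otimes\bGamma'\rangle$, and the paper states immediately that ``Theorem~\ref{t3} specializes to'' Proposition~\ref{p_MB}. Your unpacking of how $\cO_\bX$, the Weyl denominator, and $\bGamma'$ combine into $\bPhi$ via $\phi(x)=(1-x)\phi(qx)$ is exactly the computation the paper summarizes in the passage from \eqref{eq:34} to \eqref{eq:36} under the phrase ``adding the contribution of $\cO_{[\bX/\bG]}$''. So as a proof of the proposition itself, you are aligned with the paper.

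Where you diverge is in your final paragraph. The rest of the Appendix does \emph{not} merely evaluate \eqref{eq:37} by residues at the $\binom{n}{k}$ poles you name; that would only recover constant quasimaps. Instead the paper deforms the contour toward $|x_i|\to 0$ (for $\cL_+$) and picks up residues at the infinite family of poles \eqref{pole}, namely $x_i\in\{q^{d_{i,l}}\hbar^{1-i}a_l\}$, which are identified with $\Ct_q$-fixed points in $\bM_2$ of all degrees. Two nontrivial ingredients appear there that your sketch misses: the wheel condition (Lemma~\ref{l_wheel}), which is the concrete incarnation of the argument of Section~\ref{s_wheel} and is needed to kill the residues with some $v_l>1$ (i.e.\ poles not corresponding to maps landing in $\bY$); and the elliptic transformation law \eqref{eq:42}, which reduces residues at $d_i>0$ to those at $d_i=0$ times the correct power of $z$. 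This contour computation is the entire point of the Appendix---it re-proves Proposition~\ref{p_MB} independently of Theorem~\ref{t3}, thereby illustrating the mechanism of the general proof---whereas your approach takes Theorem~\ref{t3} as a black box. Both are valid, but yours does not deliver the promised ``translation into the language of contour deformations''.
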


In this Appendix, we will retrace the main steps of the proof of
Theorem \ref{t3} in the current example and in the language of
Mellin-Barnes integrals. 

\subsubsection{}

Explicit formulas for stable envelopes of the $\bA$-fixed points in
$\bY$ may be taken from  Section 4.4.5 of  \cite{ese}. There, they 
are given 
as rational functions in the Chern roots $x_i$ and interpreted as
elliptic cohomology classes on $\bY_\sst$. To make them agree
with our definition, one nees to:
\begin{enumerate}
\item[(1)] Multiply by $$
\Theta(\fgb) = \prod_{i,j} \vartheta(\hbar
x_i/x_j)\,, 
$$
which corresponds to the pushforward from $\bY_\sst$ to
$\bX_\sst$. One notes that since the resulting expression is regular,
it defines an elliptic cohomology class on all of $\bX$.
\item[(2)] Change the variable $z$ by $z_\textup{old} =
  (-\hbar^{-1/2})^n z^{-1}$ as in \eqref{cSs}. 
\end{enumerate}
The 
result if the following.

\subsubsection{}

We introduce
\begin{equation}
  \label{eq:39}
  \bff_m(x,z) = \frac{\vth(c_m x z^{-1} a_m^{-1}
)}{\vth(c_m z^{-1} )}
  \prod_{i<m} \vth(x/a_i)  \prod_{i>m} \vth(\hbar x/a_i) \,, \quad c_m
  = (-1)^n \hbar^{m-n/2} \,, 
\end{equation}
and set 
\begin{equation}
  \label{eq:29}
  \tgam_\mu = \Symm \prod_{i<j} \frac{\vth(\hbar
    x_i/x_j)}{\vth(x_i/x_j)}  \prod \bff_{\mu_i} (x_i, z \hbar^{2
    \rho_i}) \,, 
\end{equation}
where
\begin{enumerate}
\item[(1)] we symmetrize over the Weyl group $S(k)$ of $\bG$,
\item[(2)] the
collections
$$
1 \le \mu_1 < \mu_2 < \dots < \mu_k \le n
$$
correspond bijectively to points of $\bY^\bA$,
\item[(3)] $2\rho = (k-1,k-3,\dots,1-k)$  is the sum of positive roots of $\bG$.
\end{enumerate}
Note that first order poles along $x_i=x_j$ cancel upon
symmetrization, hence \eqref{eq:29}  is a regular function of all
variables.

\subsubsection{}
Consider a $1$-parameter subgroup $\sigma$ of the following form
$$
\sigma_l(q) = (1, \dots, 1, q, 1, \dots, 1)\in \bT_\bG \,, \quad
l=1,\dots, k  \,, 
$$
where $l$th coordinate is nontrivial. With respect to the quadratic
form \eqref{formT12}, we have
\begin{equation}
\sigma_l^\vee = x_l^n \prod a_i^{-1}\,, \quad \|\sigma_l\|^2 = n \,,
\quad 
\langle \sigma_l, \det T \rangle = n \,. \label{eq:43}
\end{equation}
The function \eqref{eq:29} satisfies
\begin{equation}
  \label{eq:42}
  \frac{\tgam_\mu( \sigma_l(q) x)}{\tgam_\mu(x)} =
  z q^{-n} \hbar^{-n/2} \left(\sigma_l^\vee\right)^{-1} \,, 
\end{equation}
which is a special case of \eqref{eq:14} in view of  \eqref{eq:43}. In
fact, each of the $k!$ terms in the formula \eqref{eq:29} satisfies
this difference equation.

\subsubsection{}

We now transform the integral \eqref{eq:37} into the vertex function
with descendants following the logic of the proof of Theorem \ref{t3}
in reverse.

The integral \eqref{t3} is 
the left-hand side of \eqref{vvv3} paired with a test function
$\rho$. As explained in Section \ref{s_pvv2_b}, the proof the
equality \eqref{vvv3} consist of two parts, the second of which
shows that it is enough to integrate over the maps $\bM_2 \subset
\bM_3$ that are stable at the generic point of $\Dd$.

In the language of Mellin-Barnes integrals, this second step, namely
the bound in \eqref{eq:28}, means that we
can deform the contour of integration to a contour of the form
\begin{equation}
\{|x_i|=1\} \rightsquigarrow \{|x_i|=q^{\lambda_i}\} \,, \label{eq:40}
\end{equation}
%
%
%
on which the integral picks up a factor of the form $\cO((\const
z)^{\sum \lambda_i})$. Here we choose the sign of $\lambda$
according to the stability condition, namely
$$
|z| \to 
\begin{cases}
  0 \,, & \textup{for $\cL_+$} \,, \\
  \infty \,, & \textup{for $\cL_-$} \,. 
\end{cases}
$$
See Section 3 and the Appendix in \cite{AFO} for a comprehensive
discussion of such contour deformations in the context of integral
formulas for vertex functions. A concrete description of the contour
deformation in the case at hand is given in Section \ref{s_poles}
below.

\subsubsection{}
The residues which we pick up in the process of this contour
deformation are interpreted as the contributions of $\bA \times
\Ct_\hbar \times \Ct_q$-fixed points in $\bM_2$.

Focusing on $\cL_+$-stable maps, these are characterized by $\cV$
being generated by sections $\xi^i A$ in diagram \eqref{eq135} at the generic point of
$\Dd$. Those fixed by the torus  $\bA \times
\Ct_\hbar \times \Ct_q$ are direct sums of the following blocks
\vspace{-1cm} 
\begin{equation}
  \label{diagAx1} \vspace{-1.3cm} 
  \begin{tikzcd}
    \cW_l = \cO_{\Dd} \otimes a_l   \arrow[rr,bend left, "A"] &&
    \cV_l=\bigoplus \cO_{\Dd}(d_i)\otimes a_l \hbar^{1-i} \arrow[
  out=-30,
  in=30,
  loop,
  distance=3.3cm, "\xi" '] 
  \end{tikzcd} \,, 
\end{equation} 
where $\cV_l$ has the form 
\begin{equation}
  \label{diagAx2} 
  \begin{tikzcd}
   \hspace{-1cm} \arrow[r,bend left, start anchor={[yshift=4ex,xshift=5ex]},"A"]  & \cO_{\Dd}(d_1)\otimes a_l   \arrow[r,bend left, "\xi"] &  \cO_{\Dd}(d_2)\otimes a_l
    \hbar^{-1}  \arrow[r,bend left, "\xi"] & \cO_{\Dd}(d_3)\otimes a_l
    \hbar^{-2}  \arrow[r,bend left, end
    anchor={[yshift=4ex,xshift=-6ex]}, "\xi"] & \hspace{-1cm} \dots 
  \end{tikzcd} \,, 
\end{equation} 
with
\begin{equation}
0 \le d_1 < d_2 < d_3 < \dots \,. \label{eq:46}
\end{equation}
In \eqref{diagAx2}, the maps are the natural inclusions
$$
\cO_\Dd(d_i) = t^{-d_i} \C[[t]] \hookrightarrow t^{-d_{i+1}}  \C[[t]] =
\cO_\Dd(d_{i+1}) \,. 
$$
The inequalities in \eqref{eq:46} are strict because $\xi$ has to be a
section of $\Hom(\cV,\cV) \otimes \cK_{\Dd}$, where
$\cK_{\Dd}=\cO_{\Dd}(-1)$.

\subsubsection{}\label{s_poles}

If $\cV$ is a direct sum of blocks of the form \eqref{diagAx1}  then at the corresponding
fixed point we have 
\begin{equation}
\{x_i\} = \bigcup_{l=1}^n \{ q^{d_{1,l}} a_l, q^{d_{2,l}} \hbar^{-1} a_l, q^{d_{3,l}}
\hbar^{-2} a_l, \dots \}  =  \bigcup_{l=1}^n \bigcup_{i=1}^{v_l} \{
q^{d_{i,l}} \hbar^{1-i} a_l\}\,. \label{pole}
\end{equation}
These can be directly matched to residues in the integral
\eqref{eq:37} as follows.

We write $\bPhi = \bPhi_a \bPhi_\xi$, where 
\begin{equation}
  \label{eq236}
   \bPhi_a = \prod_{i=1}^k \prod_{j=1}^n\frac{1}{\phi(a_j/x_i) \,
     \phi(\hbar x_i/a_j)}\,, \quad
   \bPhi_\xi = 
\prod_{i\ne j \le k} \frac{\phi^\circ(x_j/x_i)}{ \phi(q x_j/\hbar x_i)}
\,. 
\end{equation}
As explained e.g.\ in the Appendix in \cite{AFO}, the general prescription
is to deform the contour
in the direction of the gradient of the character
$$
{\det}^{-1} = \prod x_i^{-1}
$$
that defines the stability parameter $\cL_+$. This means the
deformation
$$
\{|x_i|=1\} \rightsquigarrow \{|x_i|=e^{-s}\}\,,  \quad \Re s >0\,, 
$$
until the contour meets a pole of the form
$$
x_j = q^{d_1} a_l \,, \quad d_1 \ge 0 \,, l \in \{1,\dots,n\} \,, 
$$
in the $\bPhi_a$ factor in \eqref{eq236}. Note that $\bPhi_\xi$ does
not pick up poles under this deformation.

\begin{figure}[!h]
  \centering
  \includegraphics[scale=0.5]{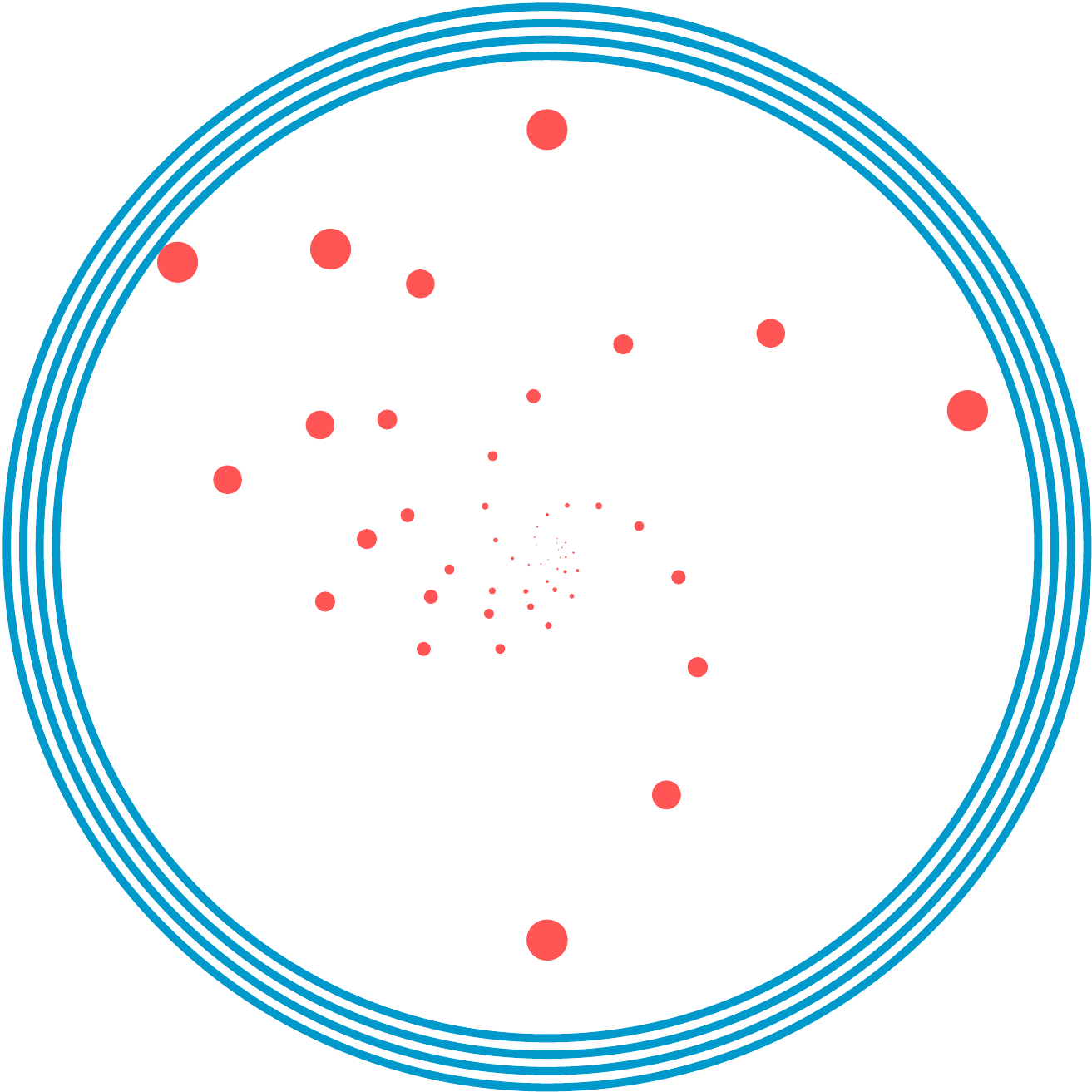}
  \caption{The original contours of integration $\{|x_i|=\const
    \approx 1\}$ and the poles of $\Phi_a$. The latter form geometric
    progressions of the form $q^d a_l$, where $|q|<|a_l|<1$.
    For $|z|\ll 1$, we deform the contours to $\{|x_i|=\const \ll
    1\}$.}
\label{countours1}
\end{figure}

\subsubsection{}

Once we specialize $x_j$ to $ q^{d_j} a_l$, we can move the product
$\prod_{i\ne j}  \phi(q x_j/\hbar x_i)$ from the denominator of
$\bPhi_\xi$ to the denominator of $\bPhi_a$, after which we similarly
deform all variables except $x_j$ in the direction of the gradient of
${\det}^{-1}$. Since the term $\phi^\circ(x_j/x_i)$ in the numerator
cancels the rest of the poles in the $\{q^d a_l\}$ series, this is equivalent to the
transformation
$$
\{a_1,\dots,a_l,\dots,a_n\} \mapsto \{a_1,\dots,q^{d_j+1} a_l/\hbar
,\dots,a_n\} \,. 
$$
Its iterations generate the poles \eqref{pole} \,.

\begin{figure}[!h]
  \centering
  \includegraphics[scale=0.64]{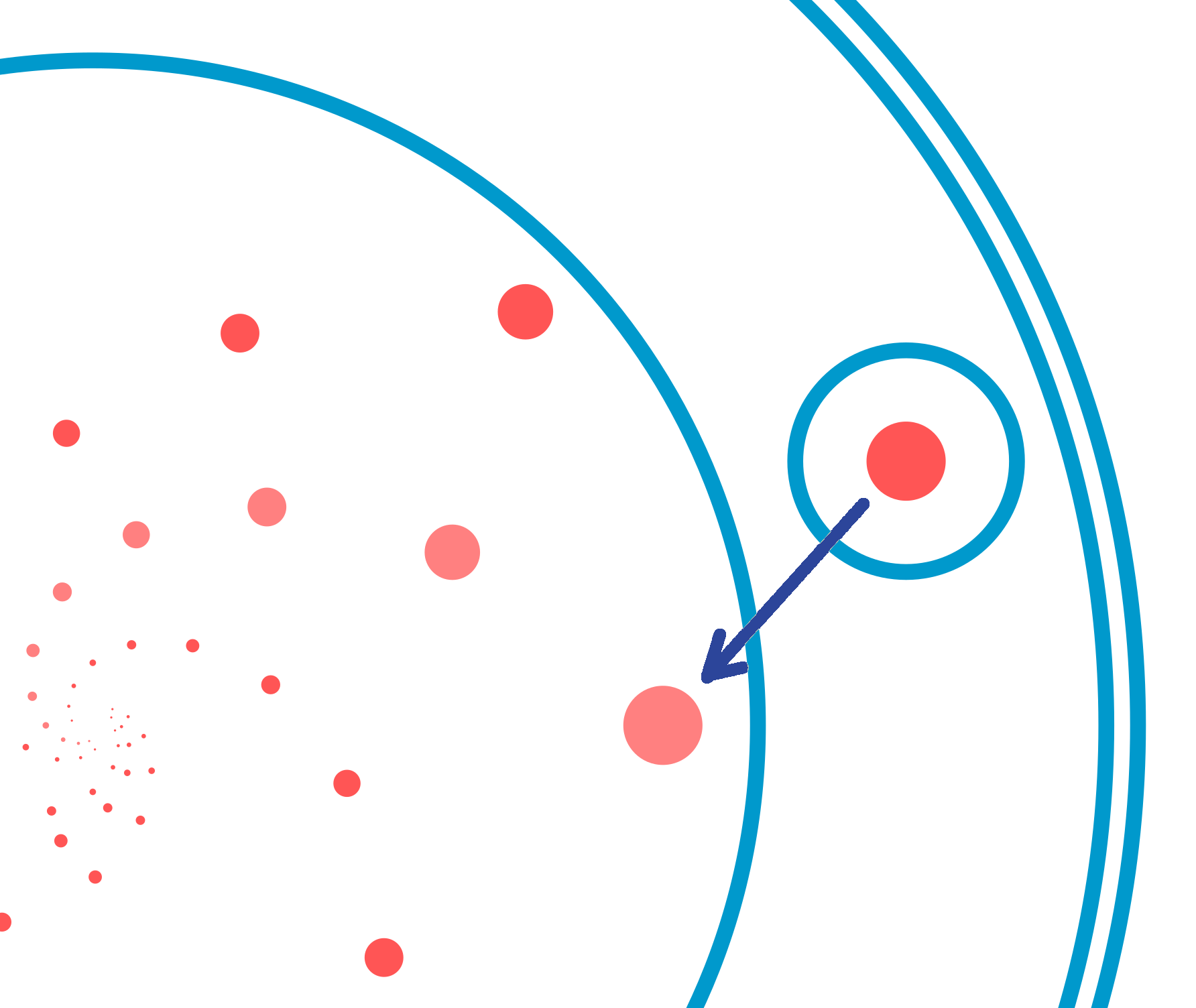}
  \caption{When pick up a residue at $x_j=q^{d_j} a_l$, the remaining
    poles in the $x_i \in \{q^{d_i} a_l\}$ series turn into poles of the
    form $x_i \in \{q^{d_i} a_l/\hbar\}$ with $d_i > d_j$.}
\label{fpolytope}
\end{figure}

\subsubsection{} 

To complete the equivalent of Proposition \ref{p_vv2} in our current
setting, we must show that poles \eqref{pole} that do not
correspond to maps to $\bY$, that is, poles such $v_l >1$ for
some $l$, do not contribute to the integral. This follows from the
following

\begin{Lemma}\label{l_wheel} 
  The functions $\tgam_\mu$ vanishes on the codimension  two locus 
  defined by 
  \begin{equation}
  \exists l,  \, \{a_l,\hbar^{-1} a_l\} \subset \{x_i\} \mod q^\Z \,.  \label{eq:47}
\end{equation}
\end{Lemma}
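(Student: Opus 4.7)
By the $S(k)$-symmetry of $\tgam_\mu$ coming from $\Symm$, I reduce to the specialization $x_1 = a_l$ and $x_2 = \hbar^{-1} a_l$ (leaving the other $x_i$ generic), and must show the symmetrized expression in \eqref{eq:29} evaluates to zero. I would organize the $k!$ terms into $\binom{k}{2}(k-2)!$ pairs $\{\sigma, \sigma\circ(p\,q)\}$, where $p, q$ are the two slots with $\sigma(p)=1$ and $\sigma(q)=2$ (i.e.\ the slots receiving $x_1$ and $x_2$); the involution swaps $a_l \leftrightarrow \hbar^{-1} a_l$ between these two slots while leaving the other $k-2$ slots untouched.

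The cancellations within each pair are driven by three instances of $\vth(1)=0$: (i) the factor $\vth(a_l/a_l)$ inside $\prod_{i<m}\vth(x/a_i)$, forcing $\bff_m(a_l)=0$ whenever $m > l$; (ii) the factor $\vth(\hbar \cdot \hbar^{-1} a_l/a_l)$ inside $\prod_{i>m}\vth(\hbar x/a_i)$, forcing $\bff_m(\hbar^{-1}a_l)=0$ whenever $m < l$; and (iii) the zero $\vth(\hbar x_\alpha/x_\beta)=\vth(1)$ coming from the prefactor $\prod_{i<j} \frac{\vth(\hbar x_i/x_j)}{\vth(x_i/x_j)}$ whenever the slot ordering puts $\hbar^{-1}a_l$ before $a_l$. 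A case analysis on $\mu_p, \mu_q$ relative to $l$ then produces two scenarios: (a) when $\mu_p, \mu_q$ lie on the same side of $l$, both pair members vanish individually via (i) or (ii); and (b) the mixed case $\mu_p < l < \mu_q$, where the ``wrong order'' term vanishes via (iii) but the ``right order'' term is not killed by the $\bff$-vanishing mechanism alone.

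The main obstacle is case (b). I would resolve it by combining the above pairing with an elliptic Fay-type identity relating different slot-pair choices, so that the surviving terms from case (b) reassemble into larger orbits that sum to zero on the wheel locus thanks to the transformation law \eqref{theta_trans_1} and the explicit product structure of $\bff_m$. A cleaner alternative, which I would attempt first, exploits the uniqueness characterization of $\tgam_\mu$: it is the unique section of a prescribed elliptic line bundle determined by its restrictions to the $\bA$-fixed points $\mu' \in \bY^\bA$, each parameterized by a $k$-subset of $\{a_1,\dots,a_n\}$. One verifies directly that each such fixed-point restriction vanishes on the wheel locus (no fixed point produces two Chern roots in the ratio $\hbar^{-1}$, as the $a_j$ are generic), and the degree/support structure of $\tgam_\mu$ as a section of the corresponding line bundle promotes this fixed-point vanishing to identical vanishing on the entire wheel divisor.
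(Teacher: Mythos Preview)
The paper's proof is a direct term-by-term argument: each of the $k!$ summands in \eqref{eq:45} vanishes \emph{individually} on the wheel locus, so no pairing, cancellation, Fay identity, or interpolation is required. After setting $x_1=a_l$ and $x_2=\hbar^{-1}a_l$, one splits the $\tau$-sum according to the sign of $\tau(1)-\tau(2)$. The key observation is that your mechanisms (i)--(ii) and (iii) are exactly \emph{complementary}: the condition $\bff_{\mu_{\tau(1)}}(a_l)\,\bff_{\mu_{\tau(2)}}(\hbar^{-1}a_l)\ne0$ forces $\mu_{\tau(1)}\le l\le\mu_{\tau(2)}$ and hence (by strict monotonicity of $\mu$) one fixed sign of $\tau(1)-\tau(2)$, while the prefactor numerator carries the factor $\vth(1)=0$ precisely for the \emph{other} sign. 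Your case~(b) is exactly the ordering dispatched by (iii); the obstacle you perceive comes from assigning the prefactor zero to the wrong orientation of the pair $\{1,2\}$ in the $\tau$-term. Once that bookkeeping is corrected there is nothing left to cancel, and the proof is two lines.

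Your interpolation alternative has a separate gap. The wheel locus \eqref{eq:47} is a condition on the equivariant parameters $(x,a,\hbar)$; restricting $\tgam_\mu$ to a fixed point $\mu'\in\bY^\bA$ specializes $x$ to a subset of the $a_j$, and for generic $(a,\hbar)$ such a specialization simply \emph{misses} the wheel locus. So you have established no vanishing there at all, and there is nothing to ``promote''. The correct abstract version of that idea is the support argument in Section~\ref{s_wheel}, of which the present Lemma is the explicit hands-on verification.
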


\noindent
The general abstract form of this Lemma is the content of argument in
Section \ref{s_wheel}.

Over the locus in $\Ell_{\bT_\bG \times \bA \times
  \Ct_\hbar}(\pt)$ that corresponds to \eqref{eq:47}, we have a fixed
point in $\bX$ that does not satisfy the moment map equation, namely
\begin{equation}
  \label{eq1343}
  \begin{tikzcd}
    W = a_l \oplus \dots   \arrow[rr, rounded corners, to path={
     -- ([yshift=2ex]\tikztostart.north) -| 
  ([yshift=3.3ex,xshift=-4.1ex]\tikztotarget.south)}] && V =
    a_l \oplus  \hbar^{-1} a_l \oplus \dots \arrow[ll,  rounded corners, to path={
     -- ([yshift=-2ex]\tikztostart.south) -| 
  ([yshift=-3.3ex,xshift=0ex]\tikztotarget.north)}]
  \end{tikzcd} \,, 
\end{equation} 
where we describe the vectors by their weights, the indicated maps are
nonzero equivariant maps, and all other maps are zero. Therefore any
class that is supported on $\mu^{-1}(0)\subset \bX$ has to vanish on
this locus.

Conditions of the kind \eqref{eq:47} that describe cohomology classes
supported on $\mu^{-1}(0)\subset \bX$ are popular in the literature
under the name \emph{wheel conditions}.  In the case at hand, the verification
of the wheel condition for $\tgam_\mu$ is elementary and goes as
follows.

\subsubsection{}

\begin{proof}[Proof of Lemma \ref{l_wheel}]
  We have
  \begin{equation}
  \tgam_\mu = \sum_{\tau \in S(k)} \prod_{\tau(i)<\tau(j)} \frac{\vth(\hbar
    x_i/x_j)}{\vth(x_i/x_j)}  \prod \bff_{\mu_{\tau(i)}} (x_i, z \hbar^{2
    \rho_{\tau(i)}}) \,. 
\label{eq:45}
\end{equation}
  By symmetry we may suppose that
  $$
  x_1 = a_l, \quad x_2 = a_l/\hbar \,, 
  $$
  and note that the  denominator in \eqref{eq:45} does not vanish upon
  this specialization. 
  
  We have
  $$
  \bff_{\mu_{\tau(1)}} (a_l, \dots )   \, \bff_{\mu_{\tau(2)}}
  (a_l/\hbar, \dots ) \ne 0 \quad \Rightarrow \quad 
  \mu_{\tau(1)} \le l \le \mu_{\tau(2)} \,, 
  $$
  and since $\mu_1 < \mu_2 < \dots$, this means that terms with
  $\tau(1) > \tau(2)$ vanish in \eqref{eq:45}. On the other hand, 
  $$
  \tau(1) < \tau(2)  \Rightarrow \prod_{\tau(i)<\tau(j)} \vth(\hbar
  x_i/x_j) = 0 \,, 
  $$
  which concludes the proof. 
\end{proof}

\subsubsection{}
It remains to consider the residue of the integral at the pole of the
form
$$
x = \{ q^{d_i} a_{\eta_i} \}
$$
where $\eta_i \ne \eta_j$. Since the integrand satisfies a scalar
$q$-difference equation, this reduces to the case $d_i=0$, that is,
to computation with constant maps.

The analysis of the scalar
$q$-difference equation is carried out in Sections \ref{s_tgam}
--- \ref{s_pvv1_c}, reproducing, in particular, formula
\eqref{eq:42} above.  This concludes direct verification of the
Proposition \ref{p_MB} in our running example.

\begin{bibdiv}
	\begin{biblist}


\bib{AFO}{article}{
   author={Aganagic, M.},
   author={Frenkel, E.},
   author={Okounkov, A.},
   title={Quantum $q$-Langlands correspondence},
   journal={Trans. Moscow Math. Soc.},
   volume={79},
   date={2018},
   pages={1--83},
   }


\bibitem{ese}
  M.~Aganagic and A.~Okounkov,
  \emph{Elliptic stable envelopes}, JAMS,
  \texttt{arXiv:1604.00423}.

\bib{Bethe}{article}{
   author={Aganagic, Mina},
   author={Okounkov, Andrei},
   title={Quasimap counts and Bethe eigenfunctions},
   journal={Mosc. Math. J.},
   volume={17},
   date={2017},
   number={4},
   pages={565--600},
}

\bibitem{AO2}
  M.~Aganagic and A.~Okounkov,
  \emph{Duality interfaces in 3-dimensional theories},
  talks at StringMath2019, available from
  \url{https://www.stringmath2019.se/scientific-talks-2/}.

\bibitem{BezOk}
  R.~Bezrukavnikov and A.~Okounkov,
  \emph{Monodromy and derived equivalences},
  in preparation.




\bib{Bogo}{article}{
   author={Bogomolov, F. A.},
   title={Holomorphic tensors and vector bundles on projective manifolds},
   journal={Izv. Akad. Nauk SSSR Ser. Mat.},
   volume={42},
   date={1978},
   number={6},
}





\bib{mixp}{article}{
   author={Chang, Huai-Liang},
   author={Li, Jun},
   author={Li, Wei-Ping},
   author={Liu, Chiu-Chu Melissa},
   title={On the mathematics and physics of mixed spin P-fields},
   conference={
      title={String-Math 2015},
   },
   book={
      series={Proc. Sympos. Pure Math.},
      volume={96},
      publisher={Amer. Math. Soc., Providence, RI},
   },
   date={2017},
   pages={47--73},
}


\bibitem{matrix} 
  Ionut Ciocan-Fontanine, David Favero, J\'er\'emy Gu\'er\'e, Bumsig
  Kim, Mark Shoemaker,
  \emph{Fundamental Factorization of a GLSM, Part I: Construction},
  \texttt{arXiv:1802.05247}.

\bib{CFKM}{article}{
   author={Ciocan-Fontanine, Ionu\c{t}},
   author={Kim, Bumsig},
   author={Maulik, Davesh},
   title={Stable quasimaps to GIT quotients},
   journal={J. Geom. Phys.},
   volume={75},
   date={2014},
   pages={17--47},
}



\bib{Gaz}{article}{
   author={Di Vizio, L.},
   author={Ramis, J.-P.},
   author={Sauloy, J.},
   author={Zhang, C.},
   title={\'Equations aux $q$-diff\'erences},
   language={French},
   journal={Gaz. Math.},
   number={96},
   date={2003},
   pages={20--49},
}

\bib{Hori_Beij}{article}{
   author={Eager, Richard},
   author={Hori, Kentaro},
   author={Knapp, Johanna},
   author={Romo, Mauricio},
   title={Beijing lectures on the grade restriction rule},
   journal={Chin. Ann. Math. Ser. B},
   volume={38},
   date={2017},
   number={4},
   pages={901--912},
}

\bib{EFK}{book}{
   author={Etingof, Pavel I.},
   author={Frenkel, Igor B.},
   author={Kirillov, Alexander A., Jr.},
   title={Lectures on representation theory and Knizhnik-Zamolodchikov
   equations},
   series={Mathematical Surveys and Monographs},
   volume={58},
   publisher={American Mathematical Society, Providence, RI},
   date={1998},
   pages={xiv+198},
}

\bib{FrenResh}{article}{
   author={Frenkel, I. B.},
   author={Reshetikhin, N. Yu.},
   title={Quantum affine algebras and holonomic difference equations},
   journal={Comm. Math. Phys.},
   volume={146},
   date={1992},
   number={1},
   pages={1--60},
}

\bib{DHL}{article}{
   author={Halpern-Leistner, Daniel},
   title={The derived category of a GIT quotient},
   journal={J. Amer. Math. Soc.},
   volume={28},
   date={2015},
   number={3},
   pages={871--912},
}

\bibitem{DHLloc} D.~Halpern-Leistner,
  \emph{A categorification of the Atiyah-Bott localization formula},
  available from \texttt{math.cornell.edu/~danielhl}. 

\bibitem{DHLMO}
  D.~Halpern-Leistner, D.~Maulik, A.~Okounkov,
  \emph{Caterogorical stable envelopes and magic windows}, 
  in preraration.

  \bib{HLS}{article}{
   author={Halpern-Leistner, Daniel},
   author={Sam, Steven V.},
   title={Combinatorial constructions of derived equivalences},
   journal={J. Amer. Math. Soc.},
   volume={33},
   date={2020},
   number={3},
   pages={735--773},
}

  

\bib{Hess}{article}{
   author={Hesselink, Wim H.},
   title={Uniform instability in reductive groups},
   journal={J. Reine Angew. Math.},
   volume={303(304)},
   date={1978},
   pages={74--96},
}

\bib{Hori_Tong}{article}{
   author={Hori, Kentaro},
   author={Tong, David},
   title={Aspects of non-abelian gauge dynamics in two-dimensional $\scr
   N=(2,2)$ theories},
   journal={J. High Energy Phys.},
   date={2007},
   number={5},
   pages={079, 41},
}


\bib{Iri}{article}{
   author={Iritani, Hiroshi},
   title={An integral structure in quantum cohomology and mirror symmetry
   for toric orbifolds},
   journal={Adv. Math.},
   volume={222},
   date={2009},
   number={3},
   pages={1016--1079},
}





\bibitem{cosec}
  Young-Hoon Kiem, Jun Li,
  \emph{Localizing virtual structure sheaves by cosections},
 \texttt{arXiv:1705.09458}. 

\bib{Kempf}{article}{
   author={Kempf, George R.},
   title={Instability in invariant theory},
   journal={Ann. of Math. (2)},
   volume={108},
   date={1978},
   number={2},
   pages={299--316},
}




\bib{Matsuo}{article}{
   author={Matsuo, Atsushi},
   title={Jackson integrals of Jordan-Pochhammer type and quantum
   Knizhnik-Zamolodchikov equations},
   journal={Comm. Math. Phys.},
   volume={151},
   date={1993},
   number={2},
   pages={263--273},
}



\bib{MO1}{article}{
   author={Maulik, Davesh},
   author={Okounkov, Andrei},
   title={Quantum groups and quantum cohomology},
   language={English, with English and French summaries},
   journal={Ast\'{e}risque},
   number={408},
   date={2019},
   pages={ix+209},
}

\bib{Higgs}{article}{
   author={Moore, Gregory},
   author={Nekrasov, Nikita},
   author={Shatashvili, Samson},
   title={Integrating over Higgs branches},
   journal={Comm. Math. Phys.},
   volume={209},
   date={2000},
   number={1},
   pages={97--121},
}

\bibitem{Mus}
M.~Musta\c t\u a, Spaces of arcs in birational geometry, available
at http://www.math.lsa.umich.edu


\bib{McGN}{article}{
   author={McGerty, Kevin},
   author={Nevins, Thomas},
   title={Kirwan surjectivity for quiver varieties},
   journal={Invent. Math.},
   volume={212},
   date={2018},
   number={1},
   pages={161--187}
}

\bib{Merk}{article}{
   author={Merkurjev, Alexander S.},
   title={Equivariant $K$-theory},
   conference={
      title={Handbook of $K$-theory. Vol. 1, 2},
   },
   book={
      publisher={Springer, Berlin},
   },
   date={2005},
   pages={925--954}
}


\bib{Nak1}{article}{
   author={Nakajima, Hiraku},
   title={Instantons on ALE spaces, quiver varieties, and Kac-Moody
   algebras},
   journal={Duke Math. J.},
   volume={76},
   date={1994},
   number={2},
   pages={365--416},
}





\bib{Nekr_Instantons}{article}{
   author={Nekrasov, Nikita A.},
   title={Seiberg-Witten prepotential from instanton counting},
   journal={Adv. Theor. Math. Phys.},
   volume={7},
   date={2003},
   number={5},
   pages={831--864},
}

\bib{NO}{article}{
   author={Nekrasov, Nikita},
   author={Okounkov, Andrei},
   title={Membranes and sheaves},
   journal={Algebr. Geom.},
   volume={3},
   date={2016},
   number={3},
   pages={320--369},
}


\bib{NS}{article}{
   author={Nekrasov, Nikita A.},
   author={Shatashvili, Samson L.},
   title={Supersymmetric vacua and Bethe ansatz},
   journal={Nuclear Phys. B Proc. Suppl.},
   volume={192/193},
   date={2009},
   pages={91--112},
}

\bib{Ness}{article}{
   author={Ness, Linda},
   title={A stratification of the null cone via the moment map},
   note={With an appendix by David Mumford},
   journal={Amer. J. Math.},
   volume={106},
   date={1984},
   number={6},
   pages={1281--1329},
}


\bib{pcmi}{article}{
   author={Okounkov, Andrei},
   title={Lectures on K-theoretic computations in enumerative geometry},
   conference={
      title={Geometry of moduli spaces and representation theory},
   },
   book={
      series={IAS/Park City Math. Ser.},
      volume={24},
      publisher={Amer. Math. Soc., Providence, RI},
   },
   date={2017},
   pages={251--380},
}

\bib{slc}{article}{
   author={Okounkov, Andrei},
   title={Enumerative geometry and geometric representation theory},
   conference={
      title={Algebraic geometry: Salt Lake City 2015},
   },
   book={
      series={Proc. Sympos. Pure Math.},
      volume={97},
      publisher={Amer. Math. Soc., Providence, RI},
   },
   date={2018},
   pages={419--457},
 }

 \bib{Takagi}{article}{
   author={Okounkov, Andrei},
   title={Takagi lectures on Donaldson-Thomas theory},
   journal={Jpn. J. Math.},
   volume={14},
   date={2019},
   number={1},
   pages={67--133},
}

\bib{icm}{article}{
   author={Okounkov, Andrei},
   title={On the crossroads of enumerative geometry and geometric
   representation theory},
   conference={
      title={Proceedings of the International Congress of
      Mathematicians---Rio de Janeiro 2018. Vol. I. Plenary lectures},
   },
   book={
      publisher={World Sci. Publ., Hackensack, NJ},
   },
   date={2018},
   pages={839--867},
}

\bibitem{part1}
\bysame, 
\emph{Inductive construction of stable envelopes and applications,
  I. Actions of tori. Elliptic cohomology and K-theory},
\texttt{arXiv:2007.09094}


\bibitem{OS}
A.~Okounkov and A.~Smirnov, 
\emph{Quantum difference equations for Nakajima varieties},
\texttt{arXiv:1602.09007}.



\bib{Reshet_int}{article}{
   author={Reshetikhin, N.},
   title={Jackson-type integrals, Bethe vectors, and solutions to a
   difference analog of the Knizhnik-Zamolodchikov system},
   journal={Lett. Math. Phys.},
   volume={26},
   date={1992},
   number={3},
   pages={153--165},
   issn={0377-9017},
   review={\MR{1199739}},
   doi={10.1007/BF00420749},
}


\bib{Rous}{article}{
   author={Rousseau, Guy},
   title={Immeubles sph\'{e}riques et th\'{e}orie des invariants},
   language={French, with English summary},
   journal={C. R. Acad. Sci. Paris S\'{e}r. A-B},
   volume={286},
   date={1978},
   number={5},
   pages={A247--A250},
}





 \bibitem{Smir_desc}
A.~Smirnov, 
\emph{Rationality of capped descendent vertex in K-theory},
\texttt{arXiv:1612.01048}.





\bib{Varchenko1}{article}{
   author={Varchenko, Alexander},
   title={Quantized Knizhnik-Zamolodchikov equations, quantum Yang-Baxter
   equation, and difference equations for $q$-hypergeometric functions},
   journal={Comm. Math. Phys.},
   volume={162},
   date={1994},
   number={3},
   pages={499--528},
}

\bib{VinPop}{article}{
   author={Vinberg, E. B.},
   author={Popov, V. L.},
   title={Invariant theory},
   book={
      series={Itogi Nauki i Tekhniki},
      publisher={Akad. Nauk SSSR, Vsesoyuz. Inst. Nauchn. i Tekhn. Inform.,
   Moscow},
   },
   date={1989},
   pages={137--314, 315},
}

  

	\end{biblist}
\end{bibdiv}

\end{document}